\documentclass[3p,times]{elsarticle}

\usepackage{silence}
%Disable all warnings issued by latex starting with "You have..."
\WarningFilter{latex}{You have requested package}

\usepackage{amsmath}

\usepackage{amssymb}
\usepackage{amsthm}
\usepackage{mathabx}

\usepackage{stmaryrd}
\usepackage{array}
\usepackage{tikz}
\usepackage{algorithmic}

\usepackage{enumitem}

\usepackage{url}
\usepackage{xcolor} 
\usepackage{hyperref}
\hypersetup{
    colorlinks=true,
    linkcolor=blue,
    filecolor=magenta,      
    urlcolor=cyan,
    }
\usepackage{cleveref}
\usepackage[figuresright]{rotating}
\usetikzlibrary{decorations.pathreplacing}%%tikz大括号

%%%%%%%%%%%%%%%%%%%%%

\newtheorem{theorem}{Theorem}[section]
\newtheorem{lemma}[theorem]{Lemma}
\newtheorem{corollary}[theorem]{Corollary}
\newtheorem{definition}[theorem]{Definition}
\newtheorem{proposition}[theorem]{Proposition}

\newtheorem{remark}[theorem]{Remark}

%%%%%%%%%%%%%%%%%%%%%

\newcommand{\mmodel}{{\bm M}}
\newcommand{\mframe}{{\bm F}}
\newcommand{\sub}[1]{\langle#1\rangle}
\newcommand{\llrr}[1]{\llbracket #1 \rrbracket}
\newcommand{\MSL}{\mathsf{MSL}}
\newcommand{\MISL}{\mathsf{MISL}}
\newcommand{\MSO}{\mathsf{MSO}}
\newcommand{\ML}{\mathsf{ML}}
\newcommand{\IMR}{\mathsf{IMR}}
\newcommand{\PDL}{\mathsf{PDL}}
\newcommand{\DEL}{\mathsf{DEL}}
\newcommand{\PAL}{\mathsf{PAL}}
\newcommand{\MIC}{\mathsf{MIC}}

\newcommand{\dep}{{\mathsf{dep}}}

%%%%%%%%%%%%%%%%%%%%%

\newcommand{\bis}{\mathrel{\mathchoice%
{\raisebox{.3ex}{$\,
  \underline{\makebox[.7em]{$\leftrightarrow$}}\,$}}%
{\raisebox{.3ex}{$\,
  \underline{\makebox[.7em]{$\leftrightarrow$}}\,$}}%
{\raisebox{.2ex}{$\,
  \underline{\makebox[.5em]{\scriptsize$\leftrightarrow$}}\,$}}%
{\raisebox{.2ex}{$\,
  \underline{\makebox[.5em]{\scriptsize$\leftrightarrow$}}\,$}}}}

\usepackage{bm}
%%%%%%%%%%%%%%%%%%%%%

\begin{document}

\begin{frontmatter}
%\dochead{}
%% Use \dochead if there is an article header, e.g. \dochead{Short communication}
%% \dochead can also be used to include a conference title, if directed by the editors
%% e.g. \dochead{17th International Conference on Dynamical Processes in Excited States of Solids}

\title{A modal approach towards substitutions}

\author[label1]{Yaxin Tu}
\author[label2]{Sujata Ghosh}
\author[label3,label4]{Fenrong Liu}
\author[label5,label6]{Dazhu Li}
\address[label1]{Department of Computer Science, Princeton University, Princeton, USA}
\address[label2]{Indian Statistical Institute, Chennai, India}
\address[label3]{The Tsinghua-UvA JRC for Logic, Department of Philosophy, Tsinghua University, Beijing, China}

\address[label4]{Institute for Logic, Language and Computation, University of Amsterdam, Amsterdam, The Netherlands}
\address[label5]{Institute of Philosophy, Chinese Academy of Sciences, Beijing, China}
\address[label6]{Department of Philosophy, University of Chinese Academy of Sciences, Beijing, China}

\begin{abstract}
Substitutions play a crucial role in a wide range of contexts, from analyzing the dynamics of social opinions and conducting mathematical computations to engaging in game-theoretical analysis. For many situations, considering one-step substitutions is often adequate. Yet, for more complex cases, iterative substitutions become indispensable. In this article, our primary focus is to study logical frameworks that model both single-step and iterative substitutions. We explore a number of properties of these logics, including their expressive strength, Hilbert-style proof systems, and satisfiability problems. Additionally, we establish connections between our proposed frameworks and relevant existing ones in the literature. For instance, we precisely delineate the relationship between single-step substitutions and the standard syntactic replacements commonly found in many classical logics. Moreover, special emphasis is placed on iterative substitutions. In this context, we compare our proposed framework with existing ones involving iterative reasoning, thereby highlighting the advantages of our proposal.

\end{abstract}

\begin{keyword}
Substitutions \sep replacements \sep iterative reasoning \sep modal logic \sep satisfiability problem 
\end{keyword}

\end{frontmatter}

\section{Substitutions as a ubiquitous mechanism}\label{sec:intro}

Many phenomena have substitutions as a core mechanism. Examples are abound in various fields, ranging from everyday social interactions to theoretical computations, including mathematical calculations, game-theoretical analysis as well as logic operations. In this article, instead of studying the concrete manifestations of substitutions in specific fields, we aim to explore logical frameworks to reason about substitutions themselves. This not only improves our understanding of a fundamental technique utilized across various fields, but also provides us with a uniform tool for these investigations. To set the groundwork for this discussion, let us first clarify our approach to substitutions.

There can be different kinds of substitutions. For an illustration, let us take the logical operation $\varphi[\psi/p]$ as an example, which is common in a number of classical frameworks, including propositional logic, modal logic and first-order logic. With this operation, a new formula is obtained by replacing all occurrences of $p$ in $\varphi$ with $\psi$. An important feature of the operation is that it does not affect the semantic extension of a fixed formula: for instance, given a valuation function $V$ and a propositional letter $q$, the extension $V(q)$ of $q$ is never changed by the operation. So, the operation $\varphi[\psi/p]$ is essentially a syntactic update. However, depending on specific applications, it is equally natural to work with substitutions on a semantic level. It enables us to change the semantic extensions of formulas, especially when we need to encode dynamic information with fixed formulas.\footnote{In what follows, to distinguish between these two approaches, we will often call the syntactic substitutions `{\em replacements}' and the semantic ones `{\em substitutions}'.}  To illustrate this and the ubiquity of substitutions, let us now present some concrete examples from different areas mentioned above.

\vspace{2mm}

{\bf {Scenario 1: single-step substitutions in belief diffusion}}\; Consider the social community  depicted in  \Cref{fig:example1}, where three agents, $a$, $b$ and $c$,  are friends of each other. Friendships  are represented by directed arrows. Also, we use atomic propositions  to annotate beliefs or opinions: for instance, in Stage 1 of \Cref{fig:example1}, agents $a$ and $b$ hold the belief $p$, while $c$ does not. As usual, modalities $\Box$ and  $\Diamond$ are used to describe the properties of friends.\footnote{For instance, agent $c$ has the property $\Box p$, i.e., {\em all $c's$ friends} have the belief $p$, while agent $a$ has the property $\Diamond p$, meaning that she has {\em at least one friend} with the belief $p$.} As suggested in e.g., \cite{liu_logical_2014,baltag-diffusion}, beliefs of agents can be affected by their friends (due to conformity or peer pressure, for instance).  Consider the following policy on the updates of beliefs:

\vspace{1.5mm}

\noindent{\em An agent  holds the belief $p$ in the next stage if, and only if, 
 (i) all her friends hold the belief $p$ at the current stage or (ii) she believes $p$ and some of her friends also have the same belief.}

\vspace{1.5mm} 

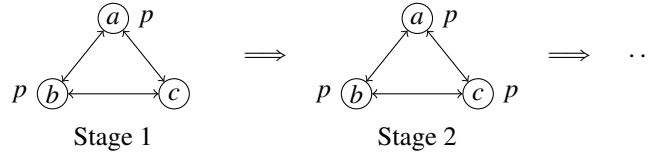
\begin{figure}[htbp]
    \centering
\begin{tikzpicture}
\node(s)[circle,draw,inner sep=0pt,minimum size=4mm,label=right:{$p$}] at (1,1) {$a$};
\node(u)[circle,draw,inner sep=0pt,minimum size=4mm,label=left:{$p$}] at (.2,0){$b$};
\node(v)[circle,draw,inner sep=0pt,minimum size=4mm] at (1.8,0){$c$};
\draw[<->](s) to  (u);
\draw[<->] (s) to  (v);
\draw[<->] (u) to  (v);

\node(v)[] at (3,0.5){$\Longrightarrow$};

\node(a)[circle,draw,inner sep=0pt,minimum size=4mm,label=right:{$p$}] at (5,1) {$a$};
\node(b)[circle,draw,inner sep=0pt,minimum size=4mm,label=left:{$p$}] at (4.2,0){$b$};
\node(c)[circle,draw,inner sep=0pt,minimum size=4mm,,label=right:{$p$}] at (5.8,0){$c$};

\node(v3)[] at (7,0.5){$\Longrightarrow$};

\node(v4)[] at (8,0.5){$\dots$};

\draw[<->](a) to  (b);
\draw[<->] (a) to  (c);
\draw[<->] (b) to  (c);
\node(1)[] at (1,-.6){Stage 1};
\node(2)[] at (5,-.6){Stage 2};
\end{tikzpicture}
    \caption{The belief status in a community that stabilizes.}
    \label{fig:example1}
\end{figure}

\noindent Formally, the mechanism of the policy has an essence of substitutions, which can be  written specifically as follows:
\[
p:= \Box p \lor (\Diamond p\land p)
\]
This implies that {\em the agents who will believe $p$  in the next stage are exactly those who currently has the property $\Box p\lor (\Diamond p\land p)$}. With the formal description, one can see  that the set of agents who believe $p$ at Stage 2 would become $\{a,b,c\}$,  evolving from the initial set of $\{a,b\}$. Afterwards, the belief status among the agents stablizes, which is exactly the transformation required  by the update policy.\footnote{Generally, the belief status of agents in a community may not always stabilize. To illustrate this, consider a community comprising two agents who are friends with each other, where one holds the belief $p$ and the other does not. Under the given update policy, their belief statuses will enter a loop, continually alternating rather than reaching a stable state. \cite{Benthem2015OscillationsLA} develops formal tools for the phenomena of oscillations. The substitution operations laid out above are important to understand these dynamics.}  So, the diffusion of beliefs in this way is essentially an embodiment of the abstract process of substitutions.

\vspace{2mm}

Given the ubiquity of phenomena involving substitutions, the single-step updates designed above might appear somewhat limited. Indeed, many situations call for {\em iterative} reasoning. Let us consider the following case to illustrate this point:

\vspace{2mm}

 {\bf  Scenario 2: iterative substitutions in backward induction}\; Consider a simple $2$-player board game, denoted as $\mathcal{G}= (\mathcal{B},s_0)$, involving {\em player  $0$} and {\em player  $1$}. $\mathcal{B} = (W,R)$ is a finite board, consisting of finitely many nodes $W$ and a binary relation $R$ among nodes, and $s_0\in W$ is a node indicating the current position of a {\em token} `$t$'. In each {\em round}, players move the token alternately, following the arrows in $R$, with player $0$ acting first. Moreover, a player {\em wins} whenever the opponent cannot proceed with a move. When the game runs infinitely, player $0$ wins.
\begin{figure}[htbp]
    \centering
\begin{tikzpicture}
\node(a1)[circle,draw,inner sep=0pt,minimum size=4mm] at (0,1.2) {$a$};
\node(b1)[circle,draw,inner sep=0pt,minimum size=4mm] at (1.2,1.2){$b$};
\node(c1)[circle,draw,inner sep=0pt,minimum size=4mm] at (0,0){$c$};
\node(d1)[circle,draw,inner sep=0pt,minimum size=4mm] at (1.2,0){$d$};
\draw[->](a1) to  (b1);
%\draw[->](a1) to  (d1);
\draw[->](a1) to  (c1);
\draw[->](c1) to  (b1);
\draw[->](b1) to  (d1);
\end{tikzpicture}
    \caption{A game board $\mathcal{B}_1$}
    \label{fig:game_example_intro}
\end{figure}
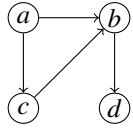

\noindent Let us consider a concrete example with the game board $\mathcal{B}_1$ depicted in \Cref{fig:game_example_intro}. It is simple to see that when the token $t$ is originally placed at $a$, player $0$ has a winning strategy: whenever it is her turn, she just needs to move the token `downwards'; while when $t$ is originally placed at $c$, player $1$ wins. More generally, on a finite $\mathcal{B}=(W,R)$, we can conduct backward induction to determine {\em winning positions} for players: a node $s$ is a winning position for a player if, and only if, when the initial position of the token $t$ is $s$, the player can win. For instance, for player $1$, the procedure goes as follows:\footnote{By the {\em Gale-Stewart Theorem} \cite{gale-theorem}, a node on a  board is either a winning position of player $0$ or a winning position of player $1$. So, the complement of the set of winning positions of player $1$ is the set of winning positions of player $0$.}

\vspace{2mm}

\noindent{\em Starting from the set $S_0$ of dead ends on the board, determine the set $S_1$ such that along arrows in $R$, any state $s_1$ of $S_1$ can only reach states $s'$  that  have access to some state in $S_0$; then based on $S_0\cup S_1$, determine the set $S_2$ such that $S_2$ is to $S_0\cup S_1$  as $S_1$ is to $S_0$; and repeat this procedure up to some finite times.}\footnote{Here $2^{|W|}$ is a proper upper bound for the repetitions in the backward induction process ($|W|$ is {\em the cardinal number} of $W$), since after $2^{|W|}$ times we will  repeat some previous step.}

\vspace{2mm}

\noindent  Again, this shares an obvious substitution core, but now it is more complex than that for Scenario 1 and has an iterative form as follows:  
\[
p:=\Box\bot; (p:= p\lor \Box\Diamond p)^*
\]
reading that {\em first set $p$ to be the dead ends and then iteratively re-define the $p$-states (of the next stage) as the states with the property $p\lor \Box\Diamond p$ (at the current stage) for finitely many times}. Such a form of iterated substitutions is first introduced in \cite{Johan-fixedpoint2021}, and for an illustration of the match between this formal definition and the backward induction procedure, see \Cref{fig:game_example}, and we defer the precise proof to  \Cref{sec:expressing-games}.\footnote{One may observe that the backward induction described can also be formalized by the modal $\mu$-calculus (see \Cref{sec:expressing-games}), and we will show that our proposal with the substitution operators are more capable to reason about the details of the game (\Cref{sec:relation-with-other-logics}).}

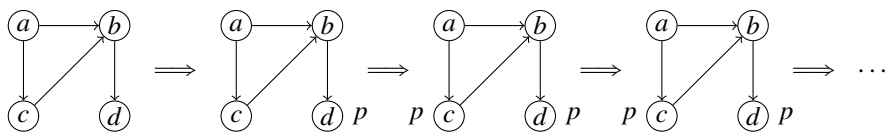
\begin{figure}[htbp]
    \centering
   \begin{tikzpicture}
\node(a1)[circle,draw,inner sep=0pt,minimum size=4mm] at (0,1.2) {$a$};
\node(b1)[circle,draw,inner sep=0pt,minimum size=4mm] at (1.2,1.2){$b$};
\node(c1)[circle,draw,inner sep=0pt,minimum size=4mm] at (0,0){$c$};
\node(d1)[circle,draw,inner sep=0pt,minimum size=4mm] at (1.2,0){$d$};
\draw[->](a1) to  (b1);
%\draw[->](a1) to  (d1);
\draw[->](a1) to  (c1);
\draw[->](c1) to  (b1);
\draw[->](b1) to  (d1);

\node(v1)[] at (2,0.6){$\Longrightarrow$};

\node(a2)[circle,draw,inner sep=0pt,minimum size=4mm] at (2.8,1.2) {$a$};
\node(b2)[circle,draw,inner sep=0pt,minimum size=4mm] at (4,1.2){$b$};
\node(c2)[circle,draw,inner sep=0pt,minimum size=4mm] at (2.8,0){$c$};
\node(d2)[circle,draw,inner sep=0pt,minimum size=4mm,label=right:{$p$}] at (4,0){$d$};
\draw[->](a2) to  (b2);
%\draw[->](a2) to  (d2);
\draw[->](a2) to  (c2);
\draw[->](c2) to  (b2);
\draw[->](b2) to  (d2);

\node(v2)[] at (4.8,0.6){$\Longrightarrow$};

\node(a3)[circle,draw,inner sep=0pt,minimum size=4mm] at (5.6,1.2) {$a$};
\node(b3)[circle,draw,inner sep=0pt,minimum size=4mm] at (6.8,1.2){$b$};
\node(c3)[circle,draw,inner sep=0pt,minimum size=4mm,label=left:{$p$}] at (5.6,0){$c$};
\node(d3)[circle,draw,inner sep=0pt,minimum size=4mm,label=right:{$p$}] at (6.8,0){$d$};
\draw[->](a3) to  (b3);
%\draw[->](a3) to  (d3);
\draw[->](a3) to  (c3);
\draw[->](c3) to  (b3);
\draw[->](b3) to  (d3);

\node(v3)[] at (7.6,0.6){$\Longrightarrow$};

\node(a4)[circle,draw,inner sep=0pt,minimum size=4mm] at (8.4,1.2) {$a$};
\node(b4)[circle,draw,inner sep=0pt,minimum size=4mm] at (9.6,1.2){$b$};
\node(c4)[circle,draw,inner sep=0pt,minimum size=4mm,label=left:{$p$}] at (8.4,0){$c$};
\node(d4)[circle,draw,inner sep=0pt,minimum size=4mm,label=right:{$p$}] at (9.6,0){$d$};
\draw[->](a4) to  (b4);
%\draw[->](a4) to  (d4);
\draw[->](a4) to  (c4);
\draw[->](c4) to  (b4);
\draw[->](b4) to  (d4);

\node(v4)[] at (10.4,0.6){$\Longrightarrow$};

\node(v5)[] at (11.2,0.6){$\dots$};
\end{tikzpicture}
    \caption{With the description $p:=\Box\bot; (p:= p\lor \Box\Diamond p)^*$, we have the transformation sequence 
$\emptyset\Rightarrow\{d\}\Rightarrow\{c,d\}\Rightarrow\{c,d\}\Rightarrow\dots$ for the $p$-states, 
which converges to $\{c,d\}$. This is exactly the backward induction reasoning of the winning positions of player $1$. }
    \label{fig:game_example}
\end{figure}

\vspace{2mm}

Although the above scenarios have been studied in different settings across existing literature, adopting a perspective centered on substitutions offers a unified approach to these diverse cases.
To understand the properties of the substitution cores in these examples, we are going to explore logical frameworks with both styles of substitutions: the single-step and the iterative computations. These will be integrated into standard modal logic as additional modalities. Beyond the widespread application of substitutions, as illustrated in the examples, they also play a crucial role in numerous other logical systems. The remainder of this section will discuss related work and provide a concise summary of the contributions made by this article.

\vspace{2mm}

{\bf Related work}\;   Here are several lines of research that are closely relevant to our work:

\vspace{1.5mm}

\noindent{\em Local fact change logic}\; Generalizing the idea of propositional control in \cite{propositional-control}, \cite{Declan} studies  the logic of local fact change that extends the standard modal logic with an operator $\bigcirc\varphi$ stating that after replacing the propositional letters true at the current state with some set of propositional letters, $\varphi$ is true at that state. As established in  \cite{Declan}, the logic has an undecidable satisfiability problem. Notably, there are several distinctions between this approach and our current work. A key difference is that the updates in this logic are implicit; they are not explicitly described by an update operator. Additionally, the set of propositional letters involved in an update could be infinite, and in general, the updates may be undefinable using a formula within the logic itself.

\vspace{1.5mm}

\noindent{\em Substitutions as syntactic modalities}\; To avoid making substitutions a notion  of the meta-language, \cite{tangwen} treats one-step substitutions as modalities and adds them to propositional logic, modal logic and first-order logic, respectively. For the resulting logics, \cite{tangwen} develops complete proof systems.  This method contrasts with our  framework, as the substitution modalities in \cite{tangwen} are syntactic; they function to alter the formulas themselves rather than to update the models.\footnote{See \cite{substitution-modality} for a proceedings version of \cite{tangwen}, which uses one-step substitutions as modalities in the setting of first-order logic.}

\vspace{1.5mm}

\noindent{\em Dynamic-epistemic logics}\; Another important tradition  is dynamic-epistemic logic $\DEL$   (e.g., \cite{hans-del,johan-del-book}) that removes states from models (e.g., the public announcement logic $\PAL$ \cite{pal}) or changes models with product updates  \cite{baltag-BMS}, to capture epistemic (or doxastic) updates induced by informative changes or factual changes. Moreover, \cite{IMR} studies a logic of iterative modal relativization that involves iterative generalizations of public announcement operators in $\PAL$ and \cite{term-modal-logic} explores epistemic logic with a predicate language that contains operators to update values of terms. 

 It is important to emphasize that the notion of substitution has been explored in $\DEL$, at both implicit level (using substitution as a way to define models) and the usual explicit level (incorporating expressions for substitution into logical languages). For instance, \cite{baltag-BMS} considers an action for `flip' that changes the truth set of a propositional variable in a model to its complement. More generally, \cite{Eijck,johan-communicaiton-change,hans-barteld-2008} use substitution as a method to capture how an event changes the valuation for propositional variables in models describing the same. They also discuss the simultaneous version of substitution that changes the valuation for two or more propositional variables at the same time. More specifically, \cite{public-assignment} adds explicit operators for single-step substitutions, called `public assignments', to $\PAL$  and discusses operators for simultaneous single-step substitution. A sound and complete Hilbert-style proof system for the logic extending $\PAL$ with the simultaneous single-step substitution operators is given in \cite{public-assignment-journal}, and \cite{kooi-substitution-formulas} explores its further extension with various operators for common knowledge (with models in which knowledge is represented by arbitrary relations). Finally, we would like to point out that \cite{public-assignment-journal} and  \cite{public-assignment} provide further observations that are relevant to some of our results (cf. Proposition \ref{prop:derivable-rule-1}, Proposition \ref{prop:derivale-rule-2}, Theorem \ref{thm:sub_equal_replacement_MSL}, Theorem \ref{thm:sub_equal_replacement_MISL}). In the course of this paper, we will compare the relevant concepts/results to indicate the detailed connections and show how we improve or complement the results in \cite{public-assignment,public-assignment-journal} (see footnote \ref{footnote1} and footnote \ref{footnote2}).

\vspace{1.5mm}

\noindent  Needless to say, in addition to the work mentioned above, there are quite a few milestone contributions that have close connections to our proposed frameworks, including the {\em modal $\mu$-calculus} \cite{10.1007/3-540-12896-4_370,Yde-mu-calculus}, {\em the infinitary modal logic} \cite{infinity-modal-logic-axiomatization}, the {\em propositional dynamic logic} \cite{open-minds} and {\em iterative model relativization} \cite{IMR}. These logics also involve iterated and/or infinite computations. For a better understanding of our proposal, we would explore the precise connections between these frameworks and ours in the later sections.

\vspace{2mm}

Our main goal in the article is to explore the framework, {\em Modal Iterative Substitution Logic $\MISL$}, with both single-step substitutions and their iterative generalizations. As a starting point, we will begin with a comparatively simpler setting, {\em Modal Substitution Logic $\MSL$}. The rest of the article is organized as follows:
\begin{itemize}
    \item  \Cref{sec:MSL} lays out the basics of $\MSL$, whose language extends the standard modal language with single-step substitution operators $\sub{p:=\psi}$. The resulting logic enjoys a number of desired properties. With the techniques of recursion axioms developed in dynamic-epistemic logic that enable us to reduce $\MSL$-formulas to standard modal logic formulas, we offer a complete proof system for $\MSL$. Based on this axiomatization, we also establish a decidability result for the satisfiability problem of the logic.
    
\item  Although our substitution operators are different from ordinary replacements $\varphi[\psi/p]$ on many levels, they are also closely related to each other, and a witness to this is given by the axiomatization in \Cref{sec:MSL}. In \Cref{sec:compare-two-styles-of-substitution} we determine the precise condition under which they are modally equivalent. 

\item After having an understanding of the basic framework, in \Cref{misl} we move on to the more intricate case $\MISL$. \Cref{sec:validities-misl} discusses various validities concerning the generalizations of the schematic principles of $\MSL$.\,
\Cref{sec:expressing-games} analyzes the applications of the resulting logic to our motivating examples and other relevant game-theoretic notions in the literature.

\item  Next, motivated by the observation on the applications of $\MISL$, we place our logic at a broader setting and study the formal connections between $\MISL$ and its cousins with the flavor of iterative computations in \Cref{sec:relation-with-other-logics}, which also demonstrates the merits of our proposal.

\item In \Cref{sec:properties-misl} we systematically investigate various properties of  $\MISL$, involving its expressive power and computational behavior.

\vspace{.5mm}

 W.r.t. expressiveness, although $\MISL$ is much more powerful than the standard modal logic,  \Cref{sec:bisimulation} shows that it is still invariant under the notion of standard bisimulation for modal logic. 

\vspace{.5mm}

On the other hand, the iterative substitution operators increase the computational complexity  of $\MISL$ drastically. \Cref{sec:undecidable} shows that the logic does not have the finite model property, and develops suitable upper bound and lower bound for the satisfiability problem of $\MISL$, which illustrate that  the logic is $\Sigma^1_1$-complete (hence, undecidable). Moreover, \Cref{sec:undecidable-tree} shows that the satisfiability problem of the logic is undecidable even when we confine ourselves to very simple classes of models (e.g., finite tree models).

\item Finally, \Cref{sec:conclusion} concludes the article with several directions that deserve to be explored in future.
\end{itemize}

\section{A basic logic for single-step substitutions: axiomatization and decidability}\label{sec:MSL}

Let us now present a formal proposal to explore the nature of substitutions. This section  focuses on a comparatively simple framework,  {\em Modal Substitution Logic} ($\MSL$), that only contains substitution operators for single-step updates. For the logic, we develop a complete Hilbert-style proof system and prove its decidability.

First of all, let us define  the language of $\MSL$, which extends {\em the language $\mathcal{L}_\ML$ of the standard modal logic $\ML$}  in the following manner:
\begin{definition}[Language $\mathcal{L}_\MSL$]
    Let ${\bf P}$ be a countable set of propositional letters. The {\em language $\mathcal{L}_\MSL$ of $\MSL$} is given by the following grammar: 
\[
\varphi::=p\mid\neg\varphi\mid\varphi\land\varphi\mid\Diamond\varphi\mid\sub{p:=\varphi}\varphi
\]
where $p\in\bf P$. Abbreviations $\top$, $\bot$,  $\lor$ and $\rightarrow$ are as usual, so are the box operators  $\Box$ and $[p:=\varphi]$ for $\Diamond$ and $\sub{p:=\varphi}$  respectively. 
\end{definition}

The readings of basic modal formulas are as usual, and the formula $\sub{p:=\psi}\varphi$ states that {\em after we replace the truth set of $p$ with the truth set of $\psi$, $\varphi$ is the case.} The truth set of a formula in a model consists of the states where the formula is true. This can be made precise after we introduce the semantics, and for now, let us first define some syntactic notions. Given a formula $\sub{p:=\psi}\varphi$, the propositional letter $p$ and formula $\varphi$ are the \emph{pivot} and the {\em scope} of the substitution operator $\sub{p:=\psi}$, respectively. Similarly for the box version $[p:=\psi]$ in a formula   $[p:=\psi]\varphi$. Also, we sometimes write $\sub{p:=\psi_1;q:=\psi_2}\varphi$  for $\sub{p:=\psi_1}\sub{q:=\psi_2}\varphi$, and $[p:=\psi_1;q:=\psi_2]\varphi$ for $[p:=\psi_1][q:=\psi_2]\varphi$. The notion of {\em subformulas} for $\MSL$, written as $\mathsf{Sub}(\varphi)$, extends that for the standard modal logic   with the following: 
\[
\mathsf{Sub}(\sub{p:=\psi}\varphi):=\mathsf{Sub}(\psi)\cup \mathsf{Sub}(\varphi)\cup\{\sub{p:=\psi}\varphi\}.
\]

As in the case for $\ML$, the formulas of $\mathcal{L}_\MSL$ are evaluated in {\em Kripke models} $\mmodel = (W, R, V)$, where $W$ 
is a non-empty set of states, $R\subseteq W\times W$ is a binary relation, and $V:{\bf P} \rightarrow 2^W$ 
is a valuation function. As usual, for any $w\in W$,   $(\mmodel,w)$ is a {\em pointed model}, and for simplicity, we often write $\mmodel,w$. Moreover, {\em frames} $\mframe=(W,R)$ are pairs without valuations. We also  write $w\in\mmodel$ or $w\in\mframe$ when $w\in W$.

\begin{definition}[Semantics]\label{definition:msl-model}
Let $\mmodel = (W, R, V)$ be a model and $w\in W$. {\em Truth of $\MSL$-formulas $\varphi$ at $(\mmodel,w)$}, written as $\mmodel,w\vDash_{\MSL} \varphi$, is given recursively as follows: 
\begin{center}
\begin{tabular}{r@{\quad $\Leftrightarrow$\quad}l}
$\mmodel,w\vDash_{\MSL} p$& $w\in V(p)$\\
$\mmodel,w\vDash_{\MSL} \neg\varphi$& $\mmodel,w\not\vDash_{\MSL} \varphi$\\
$\mmodel,w\vDash_{\MSL} \varphi_1\land\varphi_2$& $\mmodel,w\vDash_{\MSL}\varphi_1$ and $\mmodel,w\vDash_{\MSL} \varphi_2$\\
$\mmodel,w\vDash_{\MSL} \Diamond\varphi$& $\mmodel,v\vDash_{\MSL} \varphi$  for some $ v\in W $ s.t. $Rwv$ \\
$\mmodel,w\vDash_{\MSL} \sub{p:=\psi}\varphi$& $\mmodel|_{p:=\psi}, w\vDash_{\MSL} \varphi$
\end{tabular}
\end{center}
where $\mmodel|_{p:=\psi} = (W,R,V|_{p:=\psi})$ is obtained through model transformation from $\mmodel$ and may disagree with $\mmodel$ only on the valuation of $p$, where $V|_{p:=\psi}(p)=\{v\in W:\mmodel,v\vDash_{\MSL} \psi\}$.
\end{definition}

%\begin{remark}\label{well-defined_sematics}
 %   The semantics and model transformation are well-defined though there seems to be a circular definition. One can see that the evaluation of a formula is defined on the evaluation of strictly shorter formulas. 
%\end{remark}

%Till now, we have displayed the syntax and semantics of $\MSL$, which features the substitution of the valuation of a propositional variable. 

For any  model $\mmodel$ and formula $\varphi$, the {\em truth set  of $\varphi$ in $\mmodel$} is defined as $\llrr{\varphi}^{\mmodel}:=\{w\in\mmodel: \mmodel,w\vDash_{\MSL}\varphi\}$, and we often omit the superscript for the model $\mmodel$ when it is clear from the context. Also, notions such as {\em satisfiability}, {\em validity} and {\em logical consequence} are defined as usual \cite{blackburn2004modal}. We note that substitutions are, to some extent, unidirectional: we cannot trace back to the status of valuations before the substitution after the substitution is carried out. For instance, $\sub{p:=\Box q}\Box q\to p$ is not a validity, and for a counterexample, see \Cref{fig:counterexample}.

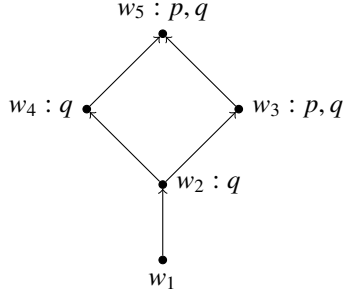
\begin{figure}[htbp]
    \centering
   \begin{tikzpicture}
\node(w1)[circle,draw,inner sep=0pt,minimum size=1mm,fill=black] at (0,0) [label=below:$w_1$]{};

\node(w2)[circle,draw,inner sep=0pt,minimum size=1mm,fill=black] at (0,1) [label=right:{$w_2: q$}]  {};

\node(w3)[circle,draw,inner sep=0pt,minimum size=1mm,fill=black] at (1,2) [label=right:{$w_3:p, q$}]   {};

\node(w4)[circle,draw,inner sep=0pt,minimum size=1mm,fill=black] at (-1,2) [label=left:{$w_4:q$}] {};

\node(w5)[circle,draw,inner sep=0pt,minimum size=1mm,fill=black] at (0,3) [label=above:{$w_5:p,q$}] {};

\draw[->](w1) to (w2);
\draw[->](w2) to (w3);
\draw[->](w2) to (w4);

\draw[->](w3) to (w5);
\draw[->](w4) to (w5);
\end{tikzpicture}
    \caption{A model $\mmodel$ in which $\sub{p:=\Box q}\Box q\to p$ is false at $w_1$. In the model, formula $\Box q$ is true everywhere. So, after the substitution, $p$ is true everywhere, and in particular, $\Box q$ is still true at $w_1$ in the new model. However, as we can see, $w_1$ does not have the property $p$ in  $\mmodel$.}
    \label{fig:counterexample}
\end{figure}
 
\vspace{2mm}

 {\bf Conventions}\; Many logical systems will be involved in the article.  For each logical system, we will use its name as a subscript of $\vDash$ and $\mathcal{L}$, respectively,  to express the satisfaction relation of the logic and its language: for instance, we wrote $\vDash_{\MSL}$ for the satisfaction relation of $\MSL$ and $\mathcal{L}_{\MSL}$ for its language.  

\vspace{2mm}

 $\MSL$ has many desired properties, among which the following is an important one:

\vspace{2mm}

\noindent{\bf Proof system} \; \Cref{tab:proof-system-msl} presents a calculus ${\bf MSL}$ for $\MSL$. It is a direct extension of proof system ${\bf ML}$ for $\ML$, which  consists of $\mathsf{(A1)}$-$\mathsf{(A3)}$, $\mathsf{(Dual)}$, $\mathsf{(K_{\Box})}$,  $\mathsf{(MP)}$ and $\mathsf{(Nec_{\Box})}$. The axiom $(\mathrm{K}_{[~]})$ shows that we can distribute an arbitrary substitution operator over an implication. Also, similar to the case for dynamic epistemic logic ($\DEL$) \cite{hans-del,johan-del-book,pal,baltag-BMS}, we have the recursion axioms that enable us to reduce $\MSL$-formulas to equivalent $\ML$-formulas:  $(\mathsf{R1})$ and $(\mathsf{R2})$ show how we can eliminate substitution operators when their scopes are propositional letters, $(\mathsf{R3})$ indicates that substitution operators are self-dual, $(\mathsf{R4})$ shows that we can distribute the operators over conjunction, and finally $(\mathsf{R5})$ ensures that the substitution operators can be pushed inside the scope of a standard modality $\Box$. In the same convention, we will write $\vdash_{\bf MSL}\varphi$ if $\varphi$ is provable in ${\bf MSL}$, and write $\vdash_{\bf ML}\varphi$ if $\varphi$ is provable in ${\bf ML}$. 

\begin{table} 
    \centering
\begin{tabular}{ll}
\cline{1-2}
\multicolumn{2}{c}{ Proof  system $\bf{MSL}$}\\
\cline{1-2}
\multicolumn{2}{l}{\;\;Basic axioms:} \\
\cline{1-2}
\quad $(\mathsf{A1})$ &\quad  $\varphi\to(\psi\to \varphi)$  \vspace{1mm}\\
\quad $(\mathsf{A2})$ &\quad  $(\varphi\to(\psi\to \chi))\to((\varphi\to \psi)\to(\varphi\to \chi))$    \vspace{1mm}\\
\quad $(\mathsf{A3})$ &\quad  $(\neg \varphi\to\neg \psi)\to(\psi\to \varphi)$  \vspace{1mm}\\
\quad $(\mathsf{Dual})$ &\quad  $\Diamond\varphi\leftrightarrow \neg\Box \neg\varphi$  \vspace{1mm} \\
\quad $(\mathsf{K}_{\Box})$ &\quad  $\Box(\varphi\to \psi)\to(\Box \varphi\to\Box \psi)$  \vspace{1mm} \\
\quad $(\mathsf{K}_{[~]})$ &\quad  $[p:=\chi](\varphi\to \psi)\to([p:=\chi] \varphi\to[p:=\chi] \psi)$\vspace{1mm} \\
\cline{1-2}
\multicolumn{2}{l}{\;\;Recursion axioms:} \\
\cline{1-2}
\quad $(\mathsf{R1})$ &\quad  $[p:=\chi]q\leftrightarrow q$,\quad given that $p$ is not $q$ \vspace{1mm} \\
\quad $(\mathsf{R2})$ &\quad  $[p:=\chi]p\leftrightarrow \chi$ \vspace{1mm} \\
\quad $(\mathsf{R3})$ &\quad  $ [p:=\chi]\neg\varphi\leftrightarrow \neg [p:=\chi]\varphi$ \vspace{1mm} \\
\quad $(\mathsf{R4})$ &\quad  $[p:=\chi](\varphi\land\psi) \leftrightarrow [p:=\chi]\varphi\land[p:=\chi]\psi$  \vspace{1mm} \\
\quad $(\mathsf{R5})$ &\quad $[p:=\chi]\Box\varphi \leftrightarrow \Box[p:=\chi]\varphi$\vspace{1mm} \\
\cline{1-2}
\multicolumn{2}{l}{\;\;Inference rules:} \\
\cline{1-2}
\quad $(\mathsf{MP})$ &\quad From $\varphi$ and $\varphi\to\psi$, infer $\psi$   \vspace{1mm}\\
\quad $(\mathsf{Nec}_{\Box})$ &\quad From $\varphi$, infer $\Box\varphi$   \vspace{1mm}\\
\quad  $(\mathsf{Nec}_{[~]})$ &\quad From $\varphi$, infer $[p:=\chi]\varphi$ \vspace{1mm}\\
\cline{1-2}
\end{tabular}    
    \caption{A proof  system $\bf{MSL}$ for $\MSL$}
    \label{tab:proof-system-msl}
\end{table}

To deepen the understanding of how this calculus works, let us explore some technical properties of the device. 
 \begin{proposition}\label{prop:derivable-rule-1}
The following rule is derivable:
\begin{center}
   From $\chi_1\leftrightarrow \chi_2$, infer $[p:=\varphi]\chi_1\leftrightarrow[p:=\varphi]\chi_2$.  
\end{center}
\end{proposition}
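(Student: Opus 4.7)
The plan is to mirror the standard modal-logic argument that $\Box$ preserves provable equivalence, but now for the substitution modality $[p:=\varphi]$. The two ingredients I need are exactly the $\mathsf{K}$-type axiom $(\mathsf{K}_{[~]})$ and the necessitation rule $(\mathsf{Nec}_{[~]})$ for the substitution operator, both of which are present in the system $\bf MSL$.

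Concretely, assume $\vdash_{\bf MSL} \chi_1 \leftrightarrow \chi_2$. First, I would use propositional reasoning (available via $(\mathsf{A1})$--$(\mathsf{A3})$ and $(\mathsf{MP})$) to split this into the two provable implications $\vdash_{\bf MSL} \chi_1 \to \chi_2$ and $\vdash_{\bf MSL} \chi_2 \to \chi_1$. Next, I would apply $(\mathsf{Nec}_{[~]})$ to each, obtaining $\vdash_{\bf MSL} [p:=\varphi](\chi_1 \to \chi_2)$ and $\vdash_{\bf MSL} [p:=\varphi](\chi_2 \to \chi_1)$. Then, by $(\mathsf{K}_{[~]})$ together with $(\mathsf{MP})$, I detach the consequents to get $\vdash_{\bf MSL} [p:=\varphi]\chi_1 \to [p:=\varphi]\chi_2$ and $\vdash_{\bf MSL} [p:=\varphi]\chi_2 \to [p:=\varphi]\chi_1$. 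A final propositional step combines these into $\vdash_{\bf MSL} [p:=\varphi]\chi_1 \leftrightarrow [p:=\varphi]\chi_2$, as required.

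There is really no obstacle here: the proof is a routine instance of the pattern that any normal modality (one validating both $\mathsf{K}$ and necessitation) preserves provable equivalence in its scope. The proposition is essentially a sanity check that the substitution operator behaves normally with respect to the derivability relation, and should be stated and dispatched in a few lines.
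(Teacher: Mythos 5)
Your proposal is correct and follows essentially the same route as the paper: reduce to the single implication $\chi_1\to\chi_2$, apply $(\mathsf{Nec}_{[~]})$, then $(\mathsf{K}_{[~]})$ with $(\mathsf{MP})$, and combine the two directions propositionally. No gaps.
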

\begin{proof}
It suffices to show that   $\vdash_{\bf MSL}\chi_1\to\chi_2$ implies $\vdash_{\bf MSL}[p:=\varphi]\chi_1\to[p:=\varphi]\chi_2$. The proof is routine, and it goes as follows:
 \begin{center}
     \begin{tabular}{lll}
      $(a)$    & $\chi_1\to\chi_2$ & Assumption \\
      $(b)$    & $[p:=\varphi](\chi_1\to \chi_2)$ &  $(\mathsf{Nec}_{[~]})$,  $(a)$\\
      $(c)$ & $[p:=\varphi](\chi_1\to \chi_2)\to ([p:=\varphi]\chi_1\to [p:=\varphi]\chi_2)$ &  $(\mathrm{K}_{[~]})$\\
      $(d)$ & $ [p:=\varphi]\chi_1\to [p:=\varphi]\chi_2$ & $(\mathsf{MP})$,  $(b)$, $(c)$ \\
     \end{tabular}
 \end{center}
 This completes the proof.
\end{proof}

Next, as in the case for $\DEL$, with the help of the recursion axioms $(\mathsf{R1})$-$(\mathsf{R5})$, we can show the following:
\begin{proposition}\label{prop:MSL=ML-syntax}
For any $\varphi\in\mathcal{L}_{\MSL}$, there is a standard modal formula  $\varphi_\ML\in\mathcal{L}_{\ML}$ such that  $\vdash_{\bf MSL}\varphi\leftrightarrow \varphi_\ML$.
\end{proposition}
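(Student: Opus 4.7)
The plan is a translation argument by strong induction on a suitable complexity measure, using the recursion axioms $(\mathsf{R1})$--$(\mathsf{R5})$ as directed rewrite rules that push substitution operators inward through all connectives and modalities, and ultimately eliminate them when they hit a propositional letter. The congruence rule established in Proposition~\ref{prop:derivable-rule-1} (together with the standard replacement-of-equivalents properties of ${\bf ML}$ under $\neg$, $\wedge$, $\Box$) lets us propagate the translations of subformulas back up.

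The core of the argument is to define a complexity measure $c:\mathcal{L}_{\MSL}\to\mathbb{N}$ heavy enough on substitutions that each recursion axiom strictly decreases it. A standard choice is
\[
c(p)=1,\quad c(\neg\varphi)=1+c(\varphi),\quad c(\varphi\land\psi)=1+c(\varphi)+c(\psi),\quad c(\Diamond\varphi)=1+c(\varphi),
\]
\[
c([p:=\chi]\varphi)=(4+c(\chi))\cdot c(\varphi).
\]
A short check shows that the right-hand side of each of $(\mathsf{R1})$--$(\mathsf{R5})$ has strictly smaller $c$-value than the left-hand side; the multiplicative weighting is exactly what ensures that $(\mathsf{R3})$ and $(\mathsf{R5})$ decrease complexity, since $(4+c(\chi))(1+c(\varphi))>1+(4+c(\chi))c(\varphi)$. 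For $(\mathsf{R2})$ the right-hand side is $\chi$, whose complexity $c(\chi)$ is strictly less than $4+c(\chi)=c([p:=\chi]p)$; this is the clause that is a priori worrying because $\chi$ may itself contain substitutions, and it is the main point to get right.

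Then I would induct on $c(\varphi)$. In the base cases ($\varphi$ atomic or belonging to $\mathcal{L}_{\ML}$ entirely) take $\varphi_{\ML}:=\varphi$. For the inductive step, if $\varphi$ contains any substitution operator, pick an outermost one $[p:=\chi]\psi$; according to the shape of $\psi$ (atomic, negation, conjunction, or box) apply the matching recursion axiom to obtain a provably equivalent formula $\varphi'$ with $c(\varphi')<c(\varphi)$. By the induction hypothesis $\varphi'$ has a modal translation $\varphi'_{\ML}$ with $\vdash_{\bf MSL}\varphi'\leftrightarrow\varphi'_{\ML}$, and Proposition~\ref{prop:derivable-rule-1} together with propositional reasoning and $(\mathsf{Nec}_\Box)$/$(\mathsf{K}_\Box)$ lets us plug this translation back into the surrounding context of $\varphi$, producing the desired $\varphi_{\ML}\in\mathcal{L}_{\ML}$ with $\vdash_{\bf MSL}\varphi\leftrightarrow\varphi_{\ML}$. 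I do not anticipate any serious obstacle; the argument follows the well-known reduction pattern for public announcement and assignment logics in $\DEL$, and the only genuinely delicate point is the bookkeeping in the complexity measure --- in particular, ensuring that $(\mathsf{R2})$ remains well-founded even when $\chi$ itself carries substitutions.
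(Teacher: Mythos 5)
Your measure-based reduction is a legitimate alternative to the paper's argument, and the arithmetic you give for the measure is correct: each of $(\mathsf{R1})$--$(\mathsf{R5})$ does strictly decrease $c$, and since $c$ is strictly monotone in subformula positions, rewriting inside a $\neg$/$\land$/$\Diamond$ context (justified by propositional reasoning, $(\mathsf{Nec}_{\Box})$/$(\mathsf{K}_{\Box})$, and Proposition~\ref{prop:derivable-rule-1} where needed) decreases the measure of the whole formula. The point you single out as delicate, $(\mathsf{R2})$ with a pivot $\chi$ that itself contains substitutions, is indeed handled by your weighting.

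However, there is a genuine gap in the induction step: your case analysis on the scope $\psi$ of the chosen outermost operator $[p:=\chi]\psi$ only covers the shapes atomic, negation, conjunction, and box, but $\psi$ may itself begin with a substitution operator, e.g.\ $\varphi=[p:=\chi][q:=\theta]r$ (formulas of the form $\sub{p:=\psi_1;q:=\psi_2}\varphi$ are entirely standard in this language). None of $(\mathsf{R1})$--$(\mathsf{R5})$ lets you push $[p:=\chi]$ through another substitution operator, so ``apply the matching recursion axiom'' has no matching axiom in this case, and you cannot simply first translate the inner block $[q:=\theta]r$ by the induction hypothesis, because the resulting $\mathcal{L}_{\ML}$-formula has uncontrolled $c$-value and your measure argument no longer applies to $[p:=\chi]$ applied to it. The fix is to choose the redex differently: pick a substitution operator whose scope's main connective is \emph{not} a substitution operator (such an occurrence always exists), or, as the paper does, work on an \emph{innermost} occurrence --- then both the pivot and the scope are pure $\mathcal{L}_{\ML}$-formulas, the recursion axioms always apply, and in fact no global termination measure is needed at all, since eliminating one innermost operator strictly reduces the number of substitution operators. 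With that one change to the redex-selection strategy your argument goes through; as stated, the induction step fails on nested substitutions.
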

 
\begin{proof}
Let $\varphi$ be an $\MSL$-formula. Starting from an innermost occurrence of a substitution operator, we can push it inside Boolean connectives and the standard modality $\Box$, which can finally give us a formula in which the substitution operator is eliminated. Repeating this procedure,  we can finally get a standard modal formula $\varphi_{\ML}$ that is provably equivalent to the original $\varphi$, for which we may need to use propositional logic and the rule in Proposition \ref{prop:derivable-rule-1}.
\end{proof}

Then, let us note the following:
\begin{proposition}\label{prop:derivale-rule-2}
   The following rule is derivable:
\begin{center}
    From $\chi_1\leftrightarrow \chi_2$, infer  $[p:=\chi_1]\varphi\leftrightarrow[p:=\chi_2]\varphi$.
\end{center} 
\end{proposition}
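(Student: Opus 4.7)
The plan is to reduce the general case to one where the scope $\varphi$ is a plain modal formula, and then run an induction on the structure of that modal formula. The key observation is that we already have Proposition \ref{prop:MSL=ML-syntax}, which tells us that every $\mathcal{L}_\MSL$-formula is provably equivalent to some $\mathcal{L}_\ML$-formula $\varphi_\ML$. Combined with Proposition \ref{prop:derivable-rule-1}, which lets us replace the scope of a substitution operator by a provable equivalent, we get $\vdash_{\bf MSL} [p:=\chi_i]\varphi \leftrightarrow [p:=\chi_i]\varphi_\ML$ for $i=1,2$. So it suffices to establish the claim under the assumption that $\varphi \in \mathcal{L}_\ML$.

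Under that assumption, I would argue by induction on $\varphi$. For the atomic case $\varphi = p$, the recursion axiom $(\mathsf{R2})$ gives $[p:=\chi_i]p \leftrightarrow \chi_i$, so the hypothesis $\chi_1 \leftrightarrow \chi_2$ yields the desired equivalence by propositional reasoning. For $\varphi = q$ with $q \ne p$, both $[p:=\chi_1]q$ and $[p:=\chi_2]q$ are equivalent to $q$ by $(\mathsf{R1})$. The Boolean cases are handled by pushing the substitution operator inside via $(\mathsf{R3})$ and $(\mathsf{R4})$ and then invoking the induction hypothesis. For the modal case $\varphi = \Box\psi$, the axiom $(\mathsf{R5})$ reduces the statement to showing $\Box[p:=\chi_1]\psi \leftrightarrow \Box[p:=\chi_2]\psi$; applying the induction hypothesis to $\psi$, followed by $(\mathsf{Nec}_\Box)$ and $(\mathsf{K}_\Box)$ in both directions, delivers this.

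I do not expect a genuine obstacle here: the proof avoids the apparent difficulty of nested substitutions in the scope (i.e., the case $\varphi = [q:=\theta]\psi$) precisely by first reducing $\varphi$ to an $\mathcal{L}_\ML$-formula, which leaves only the standard modal connectives to induct over. The only mild subtlety is making sure the induction step for $\Box$ is carried out cleanly—one needs to derive $\Box\alpha \leftrightarrow \Box\beta$ from $\alpha \leftrightarrow \beta$ using necessitation and the K-axiom—but this is a standard derived rule in any normal modal logic and is entirely routine.
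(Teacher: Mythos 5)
Your proof is correct and follows essentially the same route as the paper: the paper also handles the atomic, Boolean, and $\Box$ cases by induction using $(\mathsf{R1})$--$(\mathsf{R5})$ together with $(\mathsf{Nec}_\Box)$ and $(\mathsf{K}_\Box)$, and deals with substitution operators in the scope by invoking Proposition~\ref{prop:MSL=ML-syntax} and Proposition~\ref{prop:derivable-rule-1} exactly as you do. The only difference is organizational—you perform the reduction to an $\mathcal{L}_\ML$-formula up front, whereas the paper does it as the final case of its induction—so the argument is the same in substance.
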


\begin{proof}
 It is by induction on $\varphi$.

 \vspace{1.5mm}

 (1). Formula $\varphi$ is a propositional variable $q$. There are two different cases. 
 
 First, let us assume that $q$ is distinct from $p$.  By the axiom $(\mathsf{R1})$, $[p:=\chi_1]\varphi\leftrightarrow q$ and $[p:=\chi_2]\varphi\leftrightarrow q$. Now it is easy to see that $[p:=\chi_1]\varphi\leftrightarrow [p:=\chi_2]\varphi$.

 Next, assume that $q$ is exactly $p$. By the axiom $(\mathsf{R2})$, $[p:=\chi_1]\varphi\leftrightarrow \chi_1$ and $[p:=\chi_2]\varphi\leftrightarrow \chi_2$. By assumption, we have $\chi_1\leftrightarrow\chi_2$, which then gives us $[p:=\chi_1]\varphi\leftrightarrow [p:=\chi_2]\varphi$, as needed.

 \vspace{1.5mm}

 (2). Formula $\varphi$ is $\neg\psi$. By induction hypothesis, $[p:=\chi_1]\psi\leftrightarrow [p:=\chi_2]\psi$. So, $\neg [p:=\chi_1]\psi\leftrightarrow \neg [p:=\chi_2]\psi$. It follows from the axiom $(\mathsf{R3})$ that $\neg [p:=\chi_1]\psi\leftrightarrow  [p:=\chi_1]\neg \psi$ and $\neg [p:=\chi_2]\psi\leftrightarrow  [p:=\chi_2]\neg \psi$. By propositional logic, $[p:=\chi_1]\varphi\leftrightarrow [p:=\chi_2]\varphi$.

 \vspace{1.5mm}

 (3). Formula $\varphi$ is $\varphi_1\land\varphi_2$. By induction hypothesis, for each $i\in\{1,2\}$, we have $[p:=\chi_1]\varphi_i\leftrightarrow [p:=\chi_2]\varphi_i$. Using propositional logic, we can obtain  $[p:=\chi_1]\varphi_1\land [p:=\chi_1]\varphi_2 \leftrightarrow [p:=\chi_2]\varphi_1\land [p:=\chi_2]\varphi_2$. It follows from  $(\mathsf{R4})$ that  $[p:=\chi_1]\varphi_1\land [p:=\chi_1]\varphi_2\leftrightarrow [p:=\chi_1]\varphi$ and that $[p:=\chi_2]\varphi_1\land [p:=\chi_2]\varphi_2\leftrightarrow [p:=\chi_2]\varphi$. Consequently, $[p:=\chi_1]\varphi\leftrightarrow [p:=\chi_2]\varphi$, as desired.

 \vspace{1.5mm}

 (4). Formula $\varphi$ is $\Box\psi$.  By induction hypothesis, $[p:=\chi_1]\psi\leftrightarrow [p:=\chi_2]\psi$. Then, using propositional logic and the rule $(\mathsf{Nec}_{\Box})$, we can get $\Box([p:=\chi_1]\psi\leftrightarrow [p:=\chi_2]\psi)$. By $(\mathsf{K}_{\Box})$,   $(\mathsf{MP})$ and propositional logic, we have $\Box[p:=\chi_1]\psi\leftrightarrow \Box[p:=\chi_2]\psi$. It follows from axiom $(\mathsf{R5})$ that $\Box[p:=\chi_1]\psi\leftrightarrow [p:=\chi_1]\Box\psi$ and that  $\Box[p:=\chi_2]\psi\leftrightarrow [p:=\chi_2]\Box\psi$. So, $[p:=\chi_1]\varphi\leftrightarrow [p:=\chi_2]\varphi$.

 \vspace{1.5mm}

 (5). Formula $\varphi$ is $[q:=\psi_1]\psi_2$. Now, it follows from Proposition \ref{prop:MSL=ML-syntax} that there is a  $\theta\in\mathcal{L}_{\ML}$ with $[q:=\psi_1]\psi_2\leftrightarrow \theta$. For this $\theta$, based on the arguments in (1)-(4), it holds that $[p:=\chi_1]\theta\leftrightarrow [p:=\chi_2]\theta$.  We know from Proposition \ref{prop:derivable-rule-1} that $[p:=\chi_1]\theta\leftrightarrow [p:=\chi_1]\varphi$ and that $[p:=\chi_2]\theta\leftrightarrow [p:=\chi_2]\varphi$. Then, $[p:=\chi_1]\varphi\leftrightarrow [p:=\chi_2]\varphi$, as needed.
\end{proof}

 The inference rules given in Proposition \ref{prop:derivable-rule-1} and Proposition \ref{prop:derivale-rule-2} have there own uses, even though not explicitly mentioned here.\footnote{\label{footnote1}As mentioned in Section \ref{sec:intro}, \cite{public-assignment-journal} extends $\PAL$ with substitution operators, and it proves that the two inference rules preserve {\em validity} of formulas. Our results can be seen as their syntactic counterparts.} Now, it is easy to check the soundness of the proof system:
\begin{proposition}[Soundness of ${\bf MSL}$]\label{prop:soundness-msl}
  The proof system ${\bf MSL}$ is sound for $\MSL$.  
\end{proposition}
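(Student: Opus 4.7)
The plan is to verify soundness in the standard way: show that every axiom of ${\bf MSL}$ is valid and every inference rule preserves validity. For the propositional axioms $(\mathsf{A1})$–$(\mathsf{A3})$, the modal axioms $(\mathsf{Dual})$ and $(\mathsf{K}_\Box)$, and the rules $(\mathsf{MP})$ and $(\mathsf{Nec}_\Box)$, the verification is exactly the one for ${\bf ML}$ and can be quoted, so the real work is confined to the axioms and rule that mention the new substitution modality.

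For $(\mathsf{K}_{[~]})$ and $(\mathsf{Nec}_{[~]})$ I would unfold the semantic clause for $\sub{p:=\chi}$: in both cases the effect of the operator is to shift evaluation from a pointed model $(\mmodel,w)$ to $(\mmodel|_{p:=\chi},w)$, with the state $w$ unchanged. Hence $[p:=\chi](\varphi\to\psi)$ and $[p:=\chi]\varphi$ being true at $(\mmodel,w)$ means exactly that $\varphi\to\psi$ and $\varphi$ are true at $(\mmodel|_{p:=\chi},w)$, and propositional reasoning at that point gives $\psi$, i.e.\ $[p:=\chi]\psi$ at $(\mmodel,w)$. The necessitation rule is immediate since validity of $\varphi$ implies truth of $\varphi$ in every model, including those of the form $\mmodel|_{p:=\chi}$.

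For the recursion axioms I would take the following cases in turn and appeal directly to \Cref{definition:msl-model}. Axiom $(\mathsf{R1})$ holds because $V|_{p:=\chi}$ agrees with $V$ on every propositional letter other than $p$, so the truth of $q \ne p$ is untouched. Axiom $(\mathsf{R2})$ holds because by definition $V|_{p:=\chi}(p)=\llrr{\chi}^{\mmodel}$, so $p$ becomes true in the updated model at precisely the states where $\chi$ was true in the original. Axioms $(\mathsf{R3})$ and $(\mathsf{R4})$ follow because substitution acts as a global model transformation that leaves the evaluated state fixed, so it commutes with the purely local Boolean connectives. Axiom $(\mathsf{R5})$ is the only one that uses more than just the valuation change: here one must observe that $\mmodel$ and $\mmodel|_{p:=\chi}$ share the same carrier $W$ and the same accessibility relation $R$, so the set of successors of $w$ is unaltered, and hence $\Box$ commutes with the substitution.

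I do not anticipate a genuine obstacle; the only subtlety worth flagging is the correct placement of the model in the definition of $V|_{p:=\chi}(p)$, namely that $\chi$ is evaluated in the \emph{original} $\mmodel$ rather than in the transformed one. This is precisely what makes $(\mathsf{R2})$ go through cleanly and is also what prevents any issue with self-referential pivots like $\sub{p:=\Box p}\varphi$. Once this is fixed, each axiom reduces to a one-line semantic equivalence, and the proof is a straightforward enumeration.
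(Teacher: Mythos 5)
Your proof is correct and is exactly the routine axiom-by-axiom verification that the paper has in mind (the paper simply asserts soundness is ``easy to check'' and omits the details). You correctly identify the two facts doing all the work: the updated valuation $V|_{p:=\chi}(p)$ is the truth set of $\chi$ in the \emph{original} model, and the transformation leaves $W$, $R$, and the evaluation point fixed, which is precisely what validates $(\mathsf{R1})$--$(\mathsf{R5})$, $(\mathsf{K}_{[~]})$ and $(\mathsf{Nec}_{[~]})$.
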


With the soundness, we can show the following:
 \begin{proposition}\label{prop:MSL=ML-semantics}
For any $\varphi\in\mathcal{L}_{\MSL}$, there is a  formula  $\varphi_\ML\in\mathcal{L}_{\ML}$ such that for any $(\mmodel,s)$,  it holds that 
 \begin{center}
      $\mmodel,s\vDash_{\MSL}  \varphi$ \; iff\;  $\mmodel,s\vDash_{\ML} \varphi_\ML$. 
 \end{center}
\end{proposition}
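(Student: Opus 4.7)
The plan is to derive this semantic translation result directly from its syntactic counterpart (Proposition \ref{prop:MSL=ML-syntax}) together with soundness (Proposition \ref{prop:soundness-msl}). The key observation is that $\mathcal{L}_{\ML} \subseteq \mathcal{L}_{\MSL}$, and on this common fragment the two satisfaction relations $\vDash_{\MSL}$ and $\vDash_{\ML}$ are defined by identical clauses; hence they agree on every $\ML$-formula.

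The argument proceeds in three short steps. First, given $\varphi \in \mathcal{L}_{\MSL}$, invoke Proposition \ref{prop:MSL=ML-syntax} to produce a formula $\varphi_\ML \in \mathcal{L}_{\ML}$ with $\vdash_{\bf MSL} \varphi \leftrightarrow \varphi_\ML$. Second, by soundness (Proposition \ref{prop:soundness-msl}), this yields $\vDash_{\MSL} \varphi \leftrightarrow \varphi_\ML$, so for every pointed model $(\mmodel, s)$ we have $\mmodel, s \vDash_{\MSL} \varphi$ iff $\mmodel, s \vDash_{\MSL} \varphi_\ML$. Third, because $\varphi_\ML$ contains no substitution operators, a trivial induction on its structure (using only the clauses for propositional letters, negation, conjunction and $\Diamond$, which are identical in Definition \ref{definition:msl-model} and in the standard semantics of $\ML$) shows $\mmodel, s \vDash_{\MSL} \varphi_\ML$ iff $\mmodel, s \vDash_{\ML} \varphi_\ML$. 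Chaining these equivalences delivers the claim.

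There is no genuine obstacle here: the proposition is essentially a corollary of Propositions \ref{prop:MSL=ML-syntax} and \ref{prop:soundness-msl}, modulo the trivial remark that the two satisfaction relations coincide on $\mathcal{L}_{\ML}$. The only thing worth flagging is that the work has already been done earlier, both in constructing the translation via the recursion axioms $(\mathsf{R1})$--$(\mathsf{R5})$ and in verifying their semantic validity when establishing soundness; the present statement just repackages those facts on the semantic side.
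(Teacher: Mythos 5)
Your proposal is correct and follows essentially the same route as the paper: invoke Proposition \ref{prop:MSL=ML-syntax} to get a provably equivalent $\varphi_\ML$, then apply soundness (Proposition \ref{prop:soundness-msl}) to transfer the equivalence to the semantic level. Your additional explicit remark that $\vDash_{\MSL}$ and $\vDash_{\ML}$ coincide on $\mathcal{L}_{\ML}$-formulas is left implicit in the paper's proof but is a harmless (indeed welcome) clarification.
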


\begin{proof}
As indicated by Proposition \ref{prop:MSL=ML-syntax}, for any $\mathcal{L}_{\MSL}$, there is a  standard modal formula $\varphi_{\ML}$ that is provably equivalent to the original $\varphi$. Now, by the soundness of the calculus ${\bf MSL}$,  formulas $\varphi$ and $\varphi_{\ML}$ are equivalent, as required. 
\end{proof}

 Consequently, although  $\MSL$ is a direct extension of $\ML$, the two logics are as expressive as each other. With the result above, we can show the completeness of the proof system:
 
\begin{theorem}[Completeness of ${\bf MSL}$]\label{theorem:completeness-msl}
    The proof system  ${\bf MSL}$ is complete for $\MSL$. 
\end{theorem}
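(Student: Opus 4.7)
The plan is to carry out the standard reduction-style completeness argument, leveraging the two translation results already in hand: Proposition \ref{prop:MSL=ML-syntax} (syntactic reduction) and Proposition \ref{prop:MSL=ML-semantics} (semantic equivalence of the reduction). Both of these allow me to transfer the proof obligation from $\MSL$ to $\ML$, for which Hilbert-style completeness is classical.

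Concretely, suppose $\varphi \in \mathcal{L}_{\MSL}$ is a validity of $\MSL$. First, I would apply Proposition \ref{prop:MSL=ML-syntax} to obtain an $\ML$-formula $\varphi_{\ML}$ together with a derivation witnessing $\vdash_{\bf MSL} \varphi \leftrightarrow \varphi_{\ML}$. By the soundness of $\bf MSL$ (Proposition \ref{prop:soundness-msl}), the formulas $\varphi$ and $\varphi_{\ML}$ are semantically equivalent, so $\varphi_{\ML}$ inherits validity from $\varphi$, i.e., $\vDash_{\ML} \varphi_{\ML}$. (Equivalently, one may cite Proposition \ref{prop:MSL=ML-semantics} directly.) Then, invoking the known completeness of $\bf ML$ for $\ML$, I conclude $\vdash_{\bf ML} \varphi_{\ML}$. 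Since $\bf ML \subseteq \bf MSL$ (the $\bf ML$ axioms and rules are all present in Table \ref{tab:proof-system-msl}), the same derivation is an $\bf MSL$-derivation, hence $\vdash_{\bf MSL} \varphi_{\ML}$.

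Finally, combining $\vdash_{\bf MSL} \varphi_{\ML}$ with $\vdash_{\bf MSL} \varphi_{\ML} \to \varphi$ (a propositional consequence of the provable biconditional) via $(\mathsf{MP})$ yields $\vdash_{\bf MSL} \varphi$, which is exactly what completeness requires.

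The proof is essentially a bookkeeping exercise: there is no genuine obstacle, since all the technical work has been absorbed into the recursion axioms $(\mathsf{R1})$--$(\mathsf{R5})$ and the derived rule of Proposition \ref{prop:derivable-rule-1}, which together drive the translation in Proposition \ref{prop:MSL=ML-syntax}. The only subtlety worth flagging is ensuring that the reduction lemma is stated with a \emph{provable} equivalence (not merely a semantic one); this is why Proposition \ref{prop:MSL=ML-syntax} is phrased syntactically and used here in preference to Proposition \ref{prop:MSL=ML-semantics}. With that in place, completeness for $\MSL$ is an immediate consequence of completeness for $\ML$.
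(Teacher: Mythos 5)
Your proof is correct and follows essentially the same route as the paper: reduce $\varphi$ to a provably equivalent $\ML$-formula via the recursion axioms, transfer validity using soundness (Proposition \ref{prop:MSL=ML-semantics}), and invoke completeness of ${\bf ML}$ together with the fact that ${\bf MSL}$ extends ${\bf ML}$. The paper merely presents the same argument in contrapositive form (from $\not\vdash_{\bf MSL}\varphi$ to $\not\vDash_{\MSL}\varphi$), so there is no substantive difference.
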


\begin{proof}
Consider an $\MSL$-formula $\varphi$ such that $\not\vdash_{\bf MSL}\varphi$. With the help of the recursion axioms, we know that there is some $\varphi_{\ML}\in\mathcal{L}_{\ML}$ that is provably equivalent to $\varphi$. So, $\not \vdash_{\bf MSL} {\varphi_{\ML}}$. Since ${\bf MSL}$ is a direct extension of ${\bf ML}$, we have $\not \vdash_{\bf ML} {\varphi_{\ML}}$. As ${\bf ML}$ is a complete calculus for $\ML$, we have $\not\vDash_{\ML} {\varphi_{\ML}}$. Now, using \Cref{prop:MSL=ML-semantics}, it holds that $\not \vDash_{\MSL} \varphi$. So, the calculus ${\bf MSL}$ is complete w.r.t. the class of models.
\end{proof}

Thus, ${\bf MSL}$ is a desired proof system for $\MSL$.\footnote{For more discussion on the axiomatization of $\DEL$-style logics, we refer to \cite{yanjing-PAL}.} The reasoning in the proof for \Cref{prop:MSL=ML-semantics} also illustrates that we can encode the satisfiability problem for $\MSL$ with that for $\ML$, and since the latter is decidable \cite{open-minds}, we have the following as a corollary:
\begin{corollary}[Decidability of $\MSL$]\label{coro:decidability-msl}
The satisfiability problem for $\MSL$ is decidable. 
\end{corollary}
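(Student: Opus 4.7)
The plan is to reduce the satisfiability problem of $\MSL$ to that of $\ML$, using the translation implicit in the proof of \Cref{prop:MSL=ML-syntax}. Given an input $\varphi\in\mathcal{L}_{\MSL}$, I would define a recursive procedure $\mathsf{tr}:\mathcal{L}_{\MSL}\to\mathcal{L}_{\ML}$ by inspecting the innermost substitution operator $[p:=\chi]$ in a subformula of $\varphi$ and rewriting it using the recursion axioms $(\mathsf{R1})$--$(\mathsf{R5})$: propositional letters are handled by $(\mathsf{R1})$ or $(\mathsf{R2})$, negations and conjunctions are pushed inward using $(\mathsf{R3})$ and $(\mathsf{R4})$, and the modality $\Box$ is permuted with the substitution operator by $(\mathsf{R5})$. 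Iterating this reduction strictly decreases the number of substitution operator occurrences (once the scope is reduced to a propositional letter, the operator disappears altogether), so the procedure terminates after finitely many steps, producing a formula $\mathsf{tr}(\varphi)=:\varphi_\ML$ in $\mathcal{L}_{\ML}$.

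Next I would appeal to \Cref{prop:MSL=ML-semantics} to conclude that $\varphi$ and $\varphi_\ML$ are semantically equivalent, so that $\varphi$ is $\MSL$-satisfiable if and only if $\varphi_\ML$ is $\ML$-satisfiable. Since the satisfiability problem for $\ML$ is decidable (see e.g.\ \cite{open-minds}), the composition of the effective translation $\mathsf{tr}$ with a decision procedure for $\ML$-satisfiability yields a decision procedure for $\MSL$-satisfiability.

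The only subtle point worth checking is that the translation is genuinely effective and terminates: one must verify that when the innermost $[p:=\chi]$ is eliminated via $(\mathsf{R1})$--$(\mathsf{R5})$, no new substitution operator is introduced. This is immediate because each recursion axiom replaces $[p:=\chi]\psi$ by a formula built from $\chi$ and strict subformulas of $\psi$ together with standard Boolean and modal connectives, without reintroducing substitutions. A convenient termination measure is the number of substitution operator occurrences in $\varphi$, which drops by one each time the innermost operator is fully eliminated; hence $\mathsf{tr}$ halts in finitely many steps, and decidability follows. (The size blow-up of $\mathsf{tr}(\varphi)$ relative to $\varphi$ is not relevant for mere decidability, though it would matter for a finer complexity analysis.)
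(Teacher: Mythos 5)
Your proposal is correct and matches the paper's own argument: the paper likewise derives decidability by effectively rewriting any $\MSL$-formula into an equivalent $\ML$-formula via the recursion axioms (Propositions \ref{prop:MSL=ML-syntax} and \ref{prop:MSL=ML-semantics}) and then invoking the decidability of $\ML$. One minor quibble: because $(\mathsf{R4})$ duplicates the substitution operator over a conjunction, the raw count of operator occurrences can temporarily grow, so the cleaner termination measure is that the scope of the operator being eliminated strictly shrinks; this does not affect the correctness of your argument.
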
 
 
So far, we have seen that $\MSL$ enjoys many desired properties. The techniques used to achieve them are inspired by those employed in $\DEL$. Also, we showed that with respect to expressiveness, the logic is equivalent to $\ML$, and so is equivalent to many landmark paradigms of $\DEL$ (e.g., the public announcement logic $\PAL$). But it is worth noting that we do not necessarily need $\ML$ as a bridge to link them: as we will see, there is a natural and direct relation between our substitution operators and announcement operators of $\PAL$ (\Cref{def:IMR_to_MISL}). These suggest that our proposal could also be promising for modelling situations involving knowledge updates, but we leave this discussion for future.

%%%%%%%%%%%%%%%%%%%%%%%%%%%%%%%

\section{Substitution operators v.s. ordinary replacements}\label{sec:compare-two-styles-of-substitution}

In \Cref{sec:intro} we have clarified a conceptual difference between our notion of substitutions and the ordinary  replacements $\varphi[\psi/p]$. On the other hand, it is also crucial to acknowledge the close relationship between these two approaches. An illustration for such connections is our recursion axioms in the proof system ${\bf MSL}$: the replacements also have similar valid schemata \cite{tangwen}.  To have a better understanding of the substitution operators, in this part we explore the precise relation between the two styles of substitutions. Along the way, we introduce many notions, and it is instructive to recognize the many applications of those concepts: for instance, they help in understanding the nature of the iterative setting $\MISL$ (cf. \Cref{sec:validities-misl,sec:undecidable}).

Given a formula $\sub{p:=\psi}\varphi$, let us first note that the pivot  $p$  would be `bounded' by the operator, in that, very roughly speaking, the operator $\sub{p:=\psi}$ would update the truth set of $p$ with the states where $\psi$ is true. To make this clear, we define the notions of  {\em free variables} and {\em bound variables} in a formula: 
\begin{definition}[Free variables and bound variables in $\mathcal{L}_\MSL$]\label{def:free-bound-msl}
For any $\varphi\in\mathcal{L}_\MSL$, an occurrence of a propositional variable $p$ is a \emph{bound occurrence} if it is a pivot or appears in the scope of a substitution operator whose pivot is $p$. An occurrence of a propositional variable is a \emph{free occurrence} if it is not bound. 
    A propositional variable is called a \emph{bound variable} if it has a bound occurrence. It is called a \emph{free variable} if it has a free occurrence. We write $bv(\varphi)$ for {\em the set of bound variables} of  $\varphi$, and $fv(\varphi)$ for {\em the set of free variables} of the formula. 
\end{definition}

With the definition, a propositional variable in a formula can be both free and bound: for instance, in formula $\sub{p:=\Box p}p$, the second occurrence of $p$ is free, while the first and the third occurrences of the proposition are bound. Usually, it would be  easier to consider the formulas in which no propositional letter is  both free and bound:

\begin{definition}[Clean $\mathcal{L}_\MSL$-formulas]
    A formula $\varphi\in \mathcal{L}_\MSL$ is \emph{clean} if $fv(\varphi)\cap bv(\varphi)=\emptyset$. 
\end{definition}

Fortunately, for many purposes it is enough to consider only the clean formulas, since we can turn an arbitrary formula to a clean one without changing its truth value. For instance, one can check that $\sub{p:=\Box p}p$ is equivalent to  $\sub{q:=\Box p} q$. More generally, we have the following  renaming rule for bound variables: 
\begin{equation}\label{eqn:MSL_bound_variable_renaming}\tag{Renaming$_{\MSL}$}
\sub{p:=\psi}\varphi\leftrightarrow\sub{q:=\psi}(\varphi[q/p]), ~\textit{where $q$ is a fresh variable}.
\end{equation}
It allows us to replace bound variables  with fresh ones so that we can make a formula clean. Also, $\varphi[q/p]$ used in the rule is the formula obtained by replacing every {\em free} occurrence of the variable $p$ in $\varphi$ with an occurrence of $q$, which is formally given by the following:

\begin{definition}[Replacements of variables in $\mathcal{L}_\MSL$]\label{def:variable_replacement_MSL}
%For any $\varphi,\psi\in\mathcal{L}_\MSL$ such that $bv(\varphi)\cap fv(\psi)=\emptyset$, formula $\varphi[\psi/p]$ is given by the following recursive clauses: 
For any $\varphi,\psi\in\mathcal{L}_\MSL$, formula $\varphi[\psi/p]$ is given by the following: 
   \begin{align*}
       q[\psi/p]&:=\begin{cases}q & q\text{ is distinct from }p\\ \psi & q\text{ is }p\end{cases}\\
       (\neg\varphi)[\psi/p]&:=\neg(\varphi[\psi/p])\\
        (\varphi_1\land\varphi_2)[\psi/p]&:=\varphi_1[\psi/p]\land\varphi_2[\psi/p]\\
        (\Diamond\varphi)[\psi/p]&:=\Diamond(\varphi[\psi/p])\\
        (\sub{q:=\chi}\varphi)[\psi/p]&:=\begin{cases}
            \sub{q:=\chi[\psi/p]}(\varphi[\psi/p]) & q\text{ is distinct from }p\\
            \sub{q:=\chi [\psi/p]}\varphi & q\text{ is }p
        \end{cases}
   \end{align*}
\end{definition}

%Note that we define the replacements $\varphi[\psi/p]$ w.r.t. the case that  $bv(\varphi)\cap fv(\psi)=\emptyset$, and the clauses would not incur new bound occurrences of variables so that the precondition would not be  violated. Given an arbitrary $\mathcal{L}_{\MSL}$-formula, one can first use the rule Renaming$_{\MSL}$ to make the formula satisfy the precondition, and then the replacements defined above are applicable. 

We have the following:

\begin{proposition}\label{prop:clean_formula}
 The  principle  (\ref{eqn:MSL_bound_variable_renaming}) is valid. As a consequence,   every $\MSL$-formula is equivalent to a clean formula. 
\end{proposition}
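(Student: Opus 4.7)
The plan is to establish (Renaming$_{\MSL}$) by a direct semantic argument and then deduce the clean-formula clause by iterating the principle. Three preparatory facts, each proved by a routine induction on formula complexity, do most of the bookkeeping: (a) a Coincidence Lemma stating that the truth of $\varphi$ depends only on the valuations of propositional variables occurring in $\varphi$; (b) a Free-Variable Coincidence Lemma refining (a) to depend only on the valuations of the \emph{free} variables of $\varphi$; and (c) an Auxiliary Equivalence Lemma saying that if $q$ does not occur in $\varphi$ and $V(p)=V(q)$ in $\mmodel$, then $\mmodel,w\vDash_{\MSL}\varphi$ iff $\mmodel,w\vDash_{\MSL}\varphi[q/p]$. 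The only nontrivial case in each induction is the substitution operator, and it is handled by first applying the inductive hypothesis to the payload $\chi$ (so that the pivot receives the same extension in both compared models) and then to the scope.

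The heart of the proof is a Substitution Lemma: if $q$ occurs in neither $\varphi$ nor $\psi$, then for every model $\mmodel$ and state $w$,
\[
\mmodel|_{p:=\psi},\, w\,\vDash_{\MSL}\,\varphi \quad\text{iff}\quad \mmodel|_{q:=\psi},\, w\,\vDash_{\MSL}\,\varphi[q/p].
\]
I would argue this by induction on $\varphi$; the interesting case is $\varphi=\sub{r:=\chi}\varphi_0$. When $r=p$, the replacement only descends into $\chi$, the IH on $\chi$ gives a common extension for $p$ in both updated models, and (a) closes the case since $q\notin\varphi_0$. When $r\neq p$, I introduce an intermediate model $\mmodel^*$ agreeing with $\mmodel$ except that $V(p)=V(q)=\llrr{\psi}^{\mmodel}$ and $V(r)$ is set to the common extension supplied by the IH on $\chi$. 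Then (a) transfers the truth of $\varphi_0$ from the left target model to $\mmodel^*$ (since $q\notin\varphi_0$), (b) transfers the truth of $\varphi_0[q/p]$ from the right target model to $\mmodel^*$ (since $[q/p]$ removes every free occurrence of $p$, so $p\notin fv(\varphi_0[q/p])$), and (c) bridges $\varphi_0$ and $\varphi_0[q/p]$ inside $\mmodel^*$. Validity of (Renaming$_{\MSL}$) is the instance $\varphi_0=\varphi$ of this lemma.

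For the corollary, I would iterate (Renaming$_{\MSL}$) at subformula positions, assigning each substitution operator a distinct pivot drawn from an inexhaustible pool of variables not occurring in the current formula; the derivable replacement rules of Proposition \ref{prop:derivable-rule-1} and Proposition \ref{prop:derivale-rule-2} (together with replacement under Booleans and $\Diamond$, combined with soundness) ensure that each rewriting preserves equivalence, and since $\varphi$ contains only finitely many substitution operators, the procedure terminates with a clean equivalent. The main obstacle will be the $r\neq p$ case above: the two target models are not of the form $\mmodel'|_{p:=\psi'}$ and $\mmodel'|_{q:=\psi'}$ for a common base unless one knows $r\notin\psi$, so the inductive hypothesis on $\varphi_0$ cannot be invoked directly; the detour through $\mmodel^*$ together with the three coincidence/auxiliary lemmas is precisely what circumvents this.
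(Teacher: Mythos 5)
Your proposal is correct, but it is organized differently from the paper's proof. The paper proves (\ref{eqn:MSL_bound_variable_renaming}) by a single induction on $\varphi$ after strengthening the statement to a two-valuation invariant: for any $V_1,V_2$ on the same frame with $V_1(p)=V_2(q)$ and $V_1(r)=V_2(r)$ for all $r$ distinct from $p,q$ (nothing is assumed about $V_1(q)$ or $V_2(p)$), one has $\mmodel_1,w\vDash_{\MSL}\varphi$ iff $\mmodel_2,w\vDash_{\MSL}\varphi[q/p]$. Because this hypothesis ranges over arbitrary such valuation pairs, the case $\varphi=\sub{r:=\theta}\chi$ with $r\neq p$ is closed by noting that the updated valuations $V_1|_{r:=\theta}$ and $V_2|_{r:=\theta[q/p]}$ again satisfy the hypothesis, so the induction hypothesis applies directly; the obstacle you flag (the updated models need not have the shape $\mmodel'|_{p:=\psi}$, $\mmodel'|_{q:=\psi}$ over a common base) simply does not arise. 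You instead keep three weaker, single-purpose lemmas --- occurrence coincidence, free-variable coincidence, and the single-model identification lemma (c) --- and compensate with the intermediate model $\mmodel^*$; this is sound, since $p\notin fv(\varphi_0[q/p])$ indeed holds (bound occurrences of $p$ may survive the replacement, but no free ones), and your lemma (c) is essentially the diagonal case $V_1=V_2$ of the paper's invariant. One simplification you could make: once (a)--(c) are established, the induction for your Substitution Lemma is superfluous, because the top-level chain from $\mmodel|_{p:=\psi}$ through $\mmodel^*$ (where $p$ and $q$ both receive $\llrr{\psi}^{\mmodel}$) to $\mmodel|_{q:=\psi}$ already yields (\ref{eqn:MSL_bound_variable_renaming}) by exactly the three transfers you describe. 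In short, your route trades one strengthened induction for three routine ones plus the $\mmodel^*$ detour --- more modular, somewhat longer, same core idea; and your derivation of the clean-formula consequence (iterate renaming at subformula positions with fresh pivots, justified by replacement of equivalents via Proposition \ref{prop:derivable-rule-1}, Proposition \ref{prop:derivale-rule-2} and soundness, terminating because there are finitely many substitution operators) matches the paper's brief argument.
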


 \begin{proof}
 We show that (\ref{eqn:MSL_bound_variable_renaming}) is valid. The second part of the proposition directly follows by recursively applying the renaming principle to a formula, as each application of the renaming principle strictly reduces the intersection size of free and bound variables.

 Let $\sub{p:=\psi}\varphi$ be a formula and $q\in{\bf P}$ be a propositional variable not occurring in $\sub{p:=\psi}\varphi$.  Let $\mframe=(W,R)$ be a frame. To achieve the goal, it is enough to show that  for any valuations $V_1,V_2$ s.t. $V_1(p)=V_2(q)$ and for any $r\in\bf{P}$ distinct from $p,q$, $V_1(r)=V_2(r)$, it holds that
    \begin{center}
          $\mmodel_1,w\vDash_{\MSL} \varphi$\quad  iff \quad     $\mmodel_2,w\vDash_{\MSL} \varphi[q/p]$, 
    \end{center}
where $\mmodel_1=(W,R,V_1)$ and  $\mmodel_2=(W,R,V_2)$.

We prove this by induction on $\varphi$. The cases for Boolean connectives $\neg,\land$ hold directly by  induction  hypothesis and Definition \ref{def:variable_replacement_MSL}, and we merely consider other situations.

\vspace{1.5mm}

 (1). Formula $\varphi$ is a propositional variable $r$. There are two different cases.

\vspace{1.5mm}

 (1.1). Formula $r$ is $p$. Then, $\varphi[q/p]$ is $q$. By definition, $w\in V_1(p)$ iff $w\in V_2(q)$.

\vspace{1.5mm}

 (1.2). Formula $r$ is distinct from $p$. Then, $\varphi[q/p]$ is $r$. Again, by definition, $w\in V_1(r)$ iff $w\in V(r)$ iff  $w\in V_2(r)$ (recall that $q$ does not occur in $\sub{p:=\psi}\varphi$, $r$ must be different from $q$).

\vspace{1.5mm}

 (2). Formula $\varphi$ is $\Diamond\chi$. By Definition \ref{def:variable_replacement_MSL}, $\varphi[q/p]$ is $\Diamond(\chi[q/p])$. By induction hypothesis, $\llrr{\chi}^{\mmodel_1}=\llrr{\chi[q/p]}^{\mmodel_2}$. With the truth condition for $\Diamond$, one can easily see that  $\mmodel_1,w\vDash_{\MSL} \varphi$ iff   $\mmodel_2,w\vDash_{\MSL} \varphi[q/p]$, as needed.

\vspace{1.5mm}

 (3). We move to the case that $\varphi$ is $\sub{r:=\theta}\chi$. There are two different situations. 

\vspace{1.5mm}

 (3.1). Formula $r$ is exactly $p$. Then, $\varphi[q/p]$ is $\sub{p:=\theta[q/p]}\chi$. By induction hypothesis, $\llrr{\theta}^{\mmodel_1}=\llrr{\theta[q/p]}^{\mmodel_2}$. From this,  it follows that  $V_1|_{p:=\theta}(p)=V_2|_{p:=\theta[q/p]}(p)$. Now,   $\mmodel_1|_{p:=\theta}$ and $\mmodel_2|_{p:=\theta[q/p]}$  may only differ on the truth set of $q$. Since $q$ does not occur in $\chi$, it holds that
\begin{center}
    $\mmodel_1|_{p:=\theta}, w\vDash_{\MSL}\chi$ \quad   iff  \quad     $\mmodel_2|_{p:=\theta[q/p]}, w\vDash_{\MSL}\chi$, 
\end{center}
as desired.

\vspace{1.5mm}

 (3.2). Formula $r$ is different from $p$. Now, $\varphi[q/p]$ is $\sub{r:=\theta[q/p]}(\chi[q/p])$. As the case above, by induction hypothesis,  $V_1|_{r:=\theta}(r)=V_2|_{r:=\theta[q/p]}(r)$. For these two new valuations,  $V_1|_{r:=\theta}(p)=V_1(p)=V_2(q)=V_2|_{r:=\theta[q/p]}(q)$ and for any other $u\in{\bf P}$, $V_1|_{r:=\theta}(u)=V_2|_{r:=\theta[q/p]}(u)$. So, we can still apply induction hypothesis to the setting involving $\mmodel_1|_{r:=\theta}$ and $\mmodel_2|_{r:=\theta[q/p]}$, which gives us the following:
\begin{center}
    $\mmodel_1|_{r:=\theta},w\vDash_{\MSL}\chi$ \quad   iff \quad   $\mmodel_2|_{r:=\theta[q/p]},w\vDash_{\MSL}\chi[q/p]$,
\end{center}
which completes the proof. 
\end{proof}

\begin{remark}\label{remark:clean-MSL}
    Generally the cleanness of formulas may {\em not} be preserved with respect to its subformulas.\footnote{For instance,  the formula $\sub{p:=q}\sub{p:=p}p$ is clean,  but its subformula $\sub{p:=p}p$ is not.} But we can use  (\ref{eqn:MSL_bound_variable_renaming}) to obtain clean formulas such that their cleanness is preserved under their subformulas, e.g., by making the pivots of all substitution operators distinct from each other.
    With this understanding, in what follows, by clean $\MSL$-formulas we actually mean that they are clean $\MSL$-formulas such that the cleanness is preserved  with respect to subformulas.
\end{remark}

With the help of these notions, we can establish the following principle connecting $\sub{p:=\psi}\varphi$ and $\varphi[\psi/p]$:

\begin{theorem}\label{thm:sub_equal_replacement_MSL}
    $\sub{p:=\psi}\varphi\leftrightarrow \varphi[\psi/p]$ is a validity of $\MSL$,  given that $\sub{p:=\psi}\varphi$ is clean.
\end{theorem}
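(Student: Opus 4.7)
The plan is to prove the equivalence by structural induction on $\varphi$, establishing that both sides take the same truth value at every pointed model $(\mmodel,w)$, uniformly in $p$ and $\psi$ for which $\sub{p:=\psi}\varphi$ is clean. Per \Cref{remark:clean-MSL}, cleanness descends to all subformulas, which keeps the inductive hypothesis applicable. Two preliminary lemmas pave the way: a standard coincidence lemma asserting that $\llrr{\chi}^{\mmodel'}=\llrr{\chi}^{\mmodel}$ whenever $\mmodel$ and $\mmodel'$ differ only on the valuation of some $r\notin fv(\chi)$, and a routine induction on formulas showing that $\theta[\psi/p]=\theta$ whenever $p\notin fv(\theta)$.

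The base cases are immediate: for $\varphi=p$ both sides reduce to $\llrr{\psi}^{\mmodel}$ at $w$ (reflecting $(\mathsf{R2})$), while for $\varphi=q$ with $q\neq p$ neither side depends on the valuation of $p$ (reflecting $(\mathsf{R1})$). The Boolean cases and the $\Diamond$ case unfold the semantic clauses and invoke the inductive hypothesis on the strict subformulas, exploiting that the accessibility relation $R$ is untouched by any substitution.

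The delicate case is $\varphi=\sub{q:=\theta}\chi$, which I would split into $q=p$ and $q\neq p$. When $q=p$, cleanness of $\varphi$ forces $p\notin fv(\theta)$: indeed $p$ is a pivot of $\varphi$, so $p\in bv(\varphi)$, hence $p\notin fv(\varphi)=fv(\theta)\cup(fv(\chi)\setminus\{p\})$. The coincidence lemma then yields $\llrr{\theta}^{\mmodel|_{p:=\psi}}=\llrr{\theta}^{\mmodel}$, so $(\mmodel|_{p:=\psi})|_{p:=\theta}=\mmodel|_{p:=\theta}$; and the right-hand side simplifies to $\sub{p:=\theta}\chi$ using the second preliminary lemma, so both sides evaluate $\chi$ in $\mmodel|_{p:=\theta}$. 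When $q\neq p$, write $N_1:=(\mmodel|_{p:=\psi})|_{q:=\theta}$ and $N_2:=\mmodel|_{q:=\theta[\psi/p]}$; I claim $N_1=N_2|_{p:=\psi}$. For the $q$-coordinate, the inductive hypothesis applied to $\sub{p:=\psi}\theta$ (clean, by the hygiene check below) gives $V_{N_1}(q)=\llrr{\theta}^{\mmodel|_{p:=\psi}}=\llrr{\theta[\psi/p]}^{\mmodel}=V_{N_2}(q)$. For the $p$-coordinate, cleanness of $\sub{p:=\psi}\varphi$ forces $q\notin fv(\psi)$, so by coincidence $\llrr{\psi}^{N_2}=\llrr{\psi}^{\mmodel}$, matching $V_{N_1}(p)$. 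A final application of the inductive hypothesis to $\chi$ inside the model $N_2$ converts ``$\chi$ at $w$ in $N_2|_{p:=\psi}$'' into ``$\chi[\psi/p]$ at $w$ in $N_2$'', which is exactly the right-hand side.

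The principal obstacle I anticipate is the bookkeeping required to discharge the cleanness side-conditions: verifying, from the definitions of free and bound occurrences, that the auxiliary formulas $\sub{p:=\psi}\theta$ and $\sub{p:=\psi}\chi$ remain clean so that the inductive hypothesis applies, and extracting the precise non-occurrence facts ($p\notin fv(\theta)$ and $q\notin fv(\psi)$) directly from cleanness of the top-level formula. Once these hygiene checks are in place, the remaining semantic manipulation of valuations is entirely routine.
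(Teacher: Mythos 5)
Your strategy is sound and genuinely different from the paper's. You argue semantically by plain structural induction on $\varphi$, resolving the nested case $\varphi=\sub{q:=\theta}\chi$ through the model identity $(\mmodel|_{p:=\psi})|_{q:=\theta}=(\mmodel|_{q:=\theta[\psi/p]})|_{p:=\psi}$ together with a coincidence lemma, so the induction hypothesis is only ever applied to the proper subformulas $\theta$ and $\chi$. The paper instead inducts on the depth $\dep(\sub{p:=\psi}\varphi)$ along the well-founded order $\ll$ (\Cref{def:order_MSL}--\Cref{lem:wellfounded_ordering_MSL}), precisely because its chain of equivalences in the nested case passes through $\sub{p:=\psi}(\chi[\theta/q])$, which is not structurally smaller. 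Your route thus avoids the $\ll$/depth machinery altogether (which the paper reuses later, e.g.\ in \Cref{def:order_MISL} and \Cref{lemma:sat_relation}), at the price of heavier cleanness bookkeeping; your $q=p$ subcase, where $p\notin fv(\theta)$ is read off from cleanness of $\varphi$ via \Cref{remark:clean-MSL}, is handled correctly.

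The one step that does not go through as written is the hygiene check that $\sub{p:=\psi}\chi$ is clean. Cleanness in the sense of \Cref{remark:clean-MSL} does yield cleanness of $\sub{p:=\psi}\theta$, since $fv(\theta)\subseteq fv(\sub{q:=\theta}\chi)$; but $fv(\chi)$ need not be contained in $fv(\sub{q:=\theta}\chi)$, because $q$ is removed there. Concretely, with $\psi=\sub{q:=\top}r$, $\theta=s$ and $\chi=\Diamond q\land p$, the formula $\sub{p:=\psi}\sub{q:=s}(\Diamond q\land p)$ and all of its subformulas are clean, yet $\sub{p:=\psi}\chi$ is not ($q$ is free in $\chi$ and bound in $\psi$), so the inductive hypothesis, whose hypothesis is cleanness, cannot be invoked for it. The repair is cheap: adopt the stronger normalization already suggested in \Cref{remark:clean-MSL}, namely assume (after applying (\ref{eqn:MSL_bound_variable_renaming}), justified by \Cref{prop:clean_formula}) that the pivots of all substitution operators occurring in $\sub{p:=\psi}\varphi$ are pairwise distinct; then $q\notin bv(\psi)\cup bv(\chi)$ and your check succeeds (and the subcase $q=p$ disappears). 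Alternatively, weaken the side condition of the induction statement to the non-capture facts you actually use. With that adjustment your proof is correct.
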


We will prove this in the remaining part of this section.  Before introducing the details, let us note that when formula $\sub{p:=\psi}\varphi$ is not clean, the equivalence may fail. For instance, given three different propositional letters $p,q,r$,  consider formula $\sub{p:=q;q:=r}(p\land q)$ that is not clean: the formula is equivalent to $q\land r$, while  formula $(\sub{q:=r}(p\land q))[q/p]$, i.e., $\sub{q:=r}(q\land q)$, is equivalent to $r$.\footnote{\label{footnote2}In \cite{public-assignment}, $\PAL$ is extended with single-step substitution operators. The work finds that generally the pattern  $\sub{p:=\psi}\varphi\leftrightarrow \varphi[\psi/p]$ does not hold. Although the result looks the same as our finding, we would like to emphasize that {\em the meaning of $\varphi[\psi/p]$ in \cite{public-assignment} is very different from ours}.  \cite{public-assignment} does not distinguish between bound variables and free variables, and does not have the notion of clean formulas. In that work, $\varphi[\psi/p]$ denotes the formula obtained by replacing {\em all} occurrences of $p$ in $\varphi$ with $\psi$. As explained in \cite{public-assignment}, the failure of   $\sub{p:=\psi}\varphi\leftrightarrow \varphi[\psi/p]$ there is caused by the interaction between substitution operators and public announcement operators, but one can see from the above counterexample that the equivalence does not hold even though public announcement operators are absent in our language. To make the equivalence hold, \cite{public-assignment} imposes a precondition that the truth of $\psi$ should be preserved under restriction to arbitrary submodels, and states that ``a syntactic characterization of the rather semantic notion of `preservation' is not available". But as Theorem \ref{thm:sub_equal_replacement_MSL} shows, we have a syntactic restriction to make the principle to hold, which is general enough (Proposition \ref{prop:clean_formula}) and can also be transferred to the more complicated setting (Theorem \ref{thm:sub_equal_replacement_MISL}).}

To prove Theorem \ref{thm:sub_equal_replacement_MSL}, let us  introduce the following order on clean formulas:

 \begin{definition}[Order on clean $\mathcal{L}_\MSL$-formulas]\label{def:order_MSL}
We define a binary relation $\ll$ on  clean $\mathcal{L}_\MSL$-formulas as follows:
 \begin{center}
 $\varphi\ll\neg\varphi$,  \quad    $\varphi\ll\varphi\land\psi$, \quad   $\varphi\ll\psi\land\varphi$, \quad      $\varphi\ll\Diamond\varphi$, \quad      $\varphi[\psi/p]\ll\sub{p:=\psi}\varphi$.    
 \end{center}
  We use $\gg$ for the converse of $\ll$.
\end{definition}

This order is well-defined on clean formulas, since whenever the formula to the right-hand side of the order symbol $\ll$ is clean, then the formula on the left-hand side must be clean as well. Consequently, any descending $\ll$-chains spanned from a clean formula consist exclusively of clean formulas. In particular, replacements would not break cleanness, due to the following:
\begin{align*}
  fv(\varphi[\psi/p]) &=(fv(\varphi)\setminus\{p\})\cup fv(\psi) = fv(\sub{p:=\psi}\varphi)\\  bv(\varphi[\psi/p]) &= bv(\varphi)\cup bv(\psi)\subseteq bv(\varphi)\cup bv(\psi) \cup \{p\}=bv(\sub{p:=\psi}\varphi)
\end{align*}

 With respect to the order $\ll$, we have the following result:

\begin{proposition}\label{prop:replacement_preseve_ordering_MSL}
    For any clean $\mathcal{L}_{\MSL}$-formulas $\sub{p:=\psi}\varphi$ and $\sub{p:=\psi}\chi$,   if $\varphi\gg\chi$ then $\varphi[\psi/p]\gg \chi[\psi/p]$.
\end{proposition}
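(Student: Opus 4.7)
The plan is to argue by case analysis on the clause of Definition \ref{def:order_MSL} that witnesses $\varphi \gg \chi$. In the four structural cases where $\varphi$ has the form $\neg\chi$, $\chi \land \chi'$, $\chi' \land \chi$, or $\Diamond \chi$, the conclusion will be immediate: unfolding $\varphi[\psi/p]$ via Definition \ref{def:variable_replacement_MSL} preserves the top-level connective, so the same defining clause of $\gg$ applies to yield $\varphi[\psi/p] \gg \chi[\psi/p]$. The cleanness hypothesis plays no role in these cases.

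The substantive case is $\varphi = \sub{q:=\theta}\varphi'$ and $\chi = \varphi'[\theta/q]$. Here, I will first extract two facts from the strong cleanness of $\sub{p:=\psi}\varphi$, in the sense of Remark \ref{remark:clean-MSL} (so that pivots of distinct substitution operators are mutually distinct and disjoint from free variables): namely $q \neq p$, and $q \notin fv(\psi)$ (because $q$ is a pivot, hence in $bv(\sub{p:=\psi}\varphi)$, while $fv(\psi) \subseteq fv(\sub{p:=\psi}\varphi)$). With $q \neq p$, Definition \ref{def:variable_replacement_MSL} gives $\varphi[\psi/p] = \sub{q:=\theta[\psi/p]}(\varphi'[\psi/p])$, and the substitution clause of $\gg$ yields
\[
\varphi[\psi/p] \;\gg\; (\varphi'[\psi/p])[\theta[\psi/p]/q].
\]
It will therefore suffice to establish the identity
\[
(\varphi'[\theta/q])[\psi/p] \;=\; (\varphi'[\psi/p])[\theta[\psi/p]/q],
\]
which is the familiar substitution lemma from the $\lambda$-calculus, valid under the side conditions $q \neq p$ and $q \notin fv(\psi)$ just secured.

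The main obstacle I anticipate is the proof of this substitution lemma itself, via an auxiliary induction on $\varphi'$. The propositional, Boolean, and modal clauses will be routine unfoldings of Definition \ref{def:variable_replacement_MSL}. The delicate clause is when $\varphi'$ has the form $\sub{r:=\eta}\varphi''$: here Definition \ref{def:variable_replacement_MSL} branches on whether $r$ coincides with $p$ or with $q$, and the bookkeeping would become tangled if either coincidence arose. Fortunately, the strong cleanness of $\sub{p:=\psi}\varphi$ propagates to the subformula $\sub{r:=\eta}\varphi''$, guaranteeing that $r$ is distinct from both $p$ and $q$ and does not occur free in $\psi$ or $\theta$; hence only the generic branch of the replacement definition needs to be considered, and the inductive hypothesis applied separately to $\eta$ and to $\varphi''$ completes the step. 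Once the substitution lemma is in hand, the case analysis above closes the proof.
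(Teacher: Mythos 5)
Your structural cases and your treatment of the subcase where the inner pivot differs from $p$ are fine and essentially reproduce the paper's argument (the paper argues the commutation of the two replacements informally where you propose an explicit substitution-lemma induction, which is a reasonable tightening). The genuine gap is your claim that cleanness lets you assume $q\neq p$. The convention of Remark \ref{remark:clean-MSL} is only that cleanness is inherited by subformulas; making all pivots distinct is mentioned there merely as one way (``e.g.'') to achieve this, and it is \emph{not} part of the hypothesis of the proposition. A formula such as $\sub{p:=\psi}\sub{p:=\theta}\varphi'$ with $p\notin fv(\psi)\cup fv(\theta)$ is clean in the required sense, so the case $q=p$ can occur, and the paper's own proof devotes a separate subcase (its case (4.2)) to it: there $\varphi[\psi/p]$ is $\sub{p:=\theta[\psi/p]}\varphi'$ (the scope $\varphi'$ is untouched, since its occurrences of $p$ are bound), $\chi[\psi/p]$ is $(\varphi'[\theta/p])[\psi/p]=\varphi'[(\theta[\psi/p])/p]$, and the substitution clause of $\gg$ again applies. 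Your proof simply omits this case, and the same over-strong reading infects your auxiliary lemma, where in the clause for $\sub{r:=\eta}\varphi''$ you may not assume $r\notin\{p,q\}$; the coincidence branches of Definition \ref{def:variable_replacement_MSL} must be worked through (they do go through, but they are not vacuous).

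Note also that strengthening ``clean'' to ``all pivots distinct'' would not be a harmless convention here: replacement can duplicate pivots (if $p$ occurs free twice in $\varphi$ and $\psi$ contains a substitution operator, $\varphi[\psi/p]$ has two operators with the same pivot), so the formulas arising along descending $\ll$-chains --- exactly where this proposition is applied in Lemma \ref{lem:wellfounded_ordering_MSL} and Theorem \ref{thm:sub_equal_replacement_MSL} --- need not satisfy that stronger property. Hence proving the proposition only under your reading would yield a statement too weak for its downstream uses. The repair is straightforward: keep your argument, derive $q\notin fv(\psi)$ as you do, but split on whether $q$ is $p$ and handle the $q=p$ branch by the computation above (and correspondingly handle $r\in\{p,q\}$ in the auxiliary induction).
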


\begin{proof}
Since $\sub{p:=\psi}\varphi$ and $\sub{p:=\psi}\chi$ are clean, all formulas $\varphi$, $\psi$, $\chi$, $\varphi[\psi/p]$ and $\chi[\psi/p]$ are clean. Now, assume that  $\varphi\gg\chi$. Let us consider different cases for $\varphi\gg\chi$.

\vspace{1.5mm}

(1). Formula $\varphi$ is $\neg \chi$. Then, $\varphi[\psi/p]$ is $\neg (\chi[\psi/p])$. Hence $\varphi[\psi/p]\gg\chi[\psi/p]$.

\vspace{1.5mm}

(2). Formula  $\varphi$ is $\chi\land\chi'$. Then, $\varphi[\psi/p]$ is $\chi[\psi/p]\land \chi'[\psi/p]$. So it holds that  $\varphi[\psi/p]\gg \chi[\psi/p]$.

\vspace{1.5mm}

 (3). Formula $\varphi$ is $\Diamond\chi$. Immediately, $\varphi[\psi/p]$ is $\Diamond(\chi[\psi/p])$. Hence $\varphi[\psi/p]\gg\chi[\psi/p]$. 

 \vspace{1.5mm}

  (4). Formula $\varphi$ is $\sub{q:=\psi'}\varphi'$. Then, from the definition of $\gg$ and the assumption that $\varphi\gg\chi$, it follows that $\chi$ is  $\varphi'[\psi'/q]$. Now, we  consider two different situations.

 \vspace{1.5mm}

(4.1).  Proposition $q$ is different from $p$.  Then, $\varphi[\psi/p]$ is the formula $\sub{q:=\psi'[\psi/p]}(\varphi'[\psi/p])$. Also, formula $\chi[\psi/p]$ is $(\varphi'[\psi'/q])[\psi/p]$, which  is obtained by first replacing the free occurrences of $q$ in $\varphi'$ with $\psi'$ and then replacing the free occurrences of $p$ in $\varphi'[\psi'/q]$ with $\psi$. The free occurrences of $p$ in $\varphi'[\psi'/q]$ are those in $\varphi'$ and $\psi'$. The replacement process of $(\varphi'[\psi'/q])[\psi/p]$ is equivalent to first replacing the free occurrences of $p$ in $\varphi'$ with $\psi$, and then replacing the free occurrences of $q$ in $\varphi'[\psi/p]$ with $\psi'[\psi/p]$. Since $q\not\in fv(\psi)$, the free occurrences of $q$ in $\varphi'$ are the same as the free occurrences of $q$ in $\varphi'[\psi/p]$. Hence, $\chi[\psi/p]$ is $(\varphi'[\psi/p])[(\psi'[\psi/p])/q]$ and $\varphi[\psi/p]\gg\chi[\psi/p]$.

 \vspace{1.5mm}

(4.2).  Proposition $q$ is $p$. Then, $\varphi[\psi/p]$ is $\sub{p:=\psi'[\psi/p]}\varphi'$, and $\chi[\psi/p]$ is $(\varphi'[\psi'/p])[\psi/p]$. Following the same argument in (4.1), we can see that $\chi[\psi/p]$ is the same as $\varphi'[(\psi'[\psi/p])/p]$. Therefore, $\varphi[\psi/p]\gg\chi[\psi/p]$. 
 \vspace{1.5mm}
    
To sum up, $\varphi[\psi/p]\gg\chi[\psi/p]$ always holds. 
\end{proof}

 So, given a finite chain, e.g.,
\begin{center}
    $\varphi\gg\varphi_1\gg\varphi_2\gg\dots\gg q_{\varphi}$,  
\end{center}
the above result gives us the following for free:
\begin{center}
  $\varphi[\psi/p]\gg\varphi_1[\psi/p]\gg\varphi_2[\psi/p]\gg\dots\gg q_{\varphi}[\psi/p]$  
\end{center}
If $q_{\varphi}$ is not $p$, then the end of the above chain is exactly $q_{\varphi}$, which cannot have any $\gg$-successor. But if  $q_{\varphi}$ is  $p$, then the end of the above chain is $\psi$. When the latter is the case, given a finite $\gg$-chain $\varepsilon$ staring from $\psi$ and ending at a propositional variable $r$, by adding $\varepsilon$ to the chain above, we can obtain a finite  $\gg$-chain starting from $\varphi[\psi/p]$ and ending at $r$, which cannot have further $\gg$-successor. With this, we can establish the following: 
\begin{lemma}\label{lem:wellfounded_ordering_MSL}
The order  $\ll$ defined in \Cref{def:order_MSL} is well-founded.
\end{lemma}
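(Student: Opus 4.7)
The plan is to prove well-foundedness by exhibiting an ordinal-valued measure $\mu$ on clean $\mathcal{L}_\MSL$-formulas such that $\varphi\ll\chi$ implies $\mu(\varphi)<\mu(\chi)$; the conclusion then follows from the well-foundedness of the ordinal $<$, together with the observation (already recorded immediately before the statement) that the five clauses of $\ll$ preserve cleanness. For the propositional, Boolean, and modal constructors the recursion is routine---one may set $\mu(p)=1$, $\mu(\neg\varphi)=\mu(\varphi)+1$, $\mu(\varphi_1\land\varphi_2)=\mu(\varphi_1)\oplus\mu(\varphi_2)+1$ using natural (Hessenberg) sum, and $\mu(\Diamond\varphi)=\mu(\varphi)+1$---so that the first four clauses of $\ll$ decrease $\mu$ immediately. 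The delicate clause is the substitution case: $\mu(\sub{p:=\psi}\varphi)$ must be an ordinal expression in $\mu(\psi)$ and $\mu(\varphi)$ that grows fast enough to dominate $\mu(\varphi[\psi/p])$, which can be vastly larger than either $\mu(\varphi)$ or $\mu(\psi)$, because each free occurrence of $p$ in $\varphi$ expands into a full copy of $\psi$ under replacement.

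Once the measure is fixed, the heart of the argument is a sub-lemma proved by structural induction on $\varphi$: for every clean $\sub{p:=\psi}\varphi$ we have $\mu(\varphi[\psi/p])<\mu(\sub{p:=\psi}\varphi)$. The atomic cases reduce to $\mu(\psi)<\mu(\sub{p:=\psi}p)$ and $\mu(q)<\mu(\sub{p:=\psi}q)$ for $q\ne p$; the Boolean and modal cases follow from the induction hypothesis by standard monotonicity and distributivity properties of ordinal arithmetic. The non-trivial case is $\varphi=\sub{q:=\chi}\varphi'$ with $q\ne p$ (a condition guaranteed by the preserved-cleanness assumption, via the displayed computation of free and bound variables right before the lemma): here $\varphi[\psi/p]$ rewrites as $\sub{q:=\chi[\psi/p]}(\varphi'[\psi/p])$, and the induction hypothesis simultaneously controls $\mu(\chi[\psi/p])$ and $\mu(\varphi'[\psi/p])$, which together with the chosen $\mu$-recursion at the substitution clause yields the required strict inequality.

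The main obstacle is to calibrate the ordinal expression defining $\mu(\sub{p:=\psi}\varphi)$ so that a single recipe handles simultaneously the conjunctive case---where one needs enough slack to bound the natural sum of two inflated subformulas---and the nested-substitution case, where the inner pivot-value $\chi$ is itself blown up to $\chi[\psi/p]$. Using ordinal exponentiation of base $\omega$ with a carefully chosen exponent involving $\mu(\psi)\oplus\mu(\varphi)$ provides enough room for both regimes at once: the exponential absorbs the multiplicative duplication of $\psi$ introduced by free occurrences of $p$, while using natural sum in the exponent keeps the cascading effect of nested substitution operators within the budget allocated by the outer operator. With the sub-lemma in hand, each of the five clauses of $\ll$ strictly decreases $\mu$, and well-foundedness of $\ll$ on clean formulas transfers from that of $<$ on the ordinals.
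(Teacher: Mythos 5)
Your overall strategy---transferring well-foundedness from the ordinals via a strictly $\ll$-decreasing, structurally defined measure---is genuinely different from the paper's argument, but it has a real gap at exactly the step you defer as ``calibration'', and that step cannot be carried out in the form you describe. You stipulate that $\mu(\sub{p:=\psi}\varphi)$ is an ordinal expression in $\mu(\psi)$ and $\mu(\varphi)$ alone. Take $\varphi=\sub{q:=p}p$ and $\psi=\sub{r:=s}t$ with $p,q,r,s,t$ pairwise distinct: then $\sub{p:=\psi}\varphi$ is clean (also in the strengthened sense of \Cref{remark:clean-MSL}), and $\varphi[\psi/p]=\sub{q:=\psi}\psi$. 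For the recipe you indicate, $\mu(\sub{p:=\psi}\varphi)=\omega^{\mu(\psi)\oplus\mu(\varphi)}$ with $\mu(p)=1$ on atoms, one computes $\mu(\varphi)=\mu(\psi)=\omega^{1\oplus 1}$, hence $\mu(\varphi[\psi/p])=\omega^{\mu(\psi)\oplus\mu(\psi)}=\omega^{\mu(\psi)\oplus\mu(\varphi)}=\mu(\sub{p:=\psi}\varphi)$: no strict decrease. The failure is not specific to this recipe. If $\mu(\sub{p:=\psi}\varphi)=F(\mu(\psi),\mu(\varphi))$ with $F$ weakly monotone in its second argument (a property your inductive step needs in order to use the induction hypothesis bounds), the same template forces $F(\mu(\psi),\mu(\psi))<F(\mu(\psi),F(1,1))$ for every clean $\psi$ in fresh variables; since the measure is unbounded (it must strictly increase under $\neg$, and e.g.\ $\mu(\neg\sub{r:=s}t)=F(1,1)+1$), choosing $\psi$ with $\mu(\psi)\ge F(1,1)$ contradicts monotonicity. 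The underlying point is that the pair $(\mu(\psi),\mu(\varphi))$ carries too little information: how far $\varphi[\psi/p]$ can descend depends on where the free occurrences of $p$ sit inside $\varphi$, not merely on $\mu(\varphi)$. A smaller slip: cleanness does not force the inner pivot to differ from $p$ (the formula $\sub{p:=q}\sub{p:=r}s$ is clean, with clean subformulas), so your nested case silently omits the $q=p$ subcase, which the paper treats explicitly in \Cref{prop:replacement_preseve_ordering_MSL}; that subcase is, however, the easy one.

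The paper avoids any a priori measure. It first proves \Cref{prop:replacement_preseve_ordering_MSL} (replacement preserves $\gg$), and then shows by structural induction that every clean formula spans only finite descending $\ll$-chains: a chain below $\sub{p:=\psi}\varphi$ begins at $\varphi[\psi/p]$, is the $[\psi/p]$-image of a chain below $\varphi$, and, if that chain bottoms out at the variable $p$, continues with a chain below $\psi$. Since a single descending chain threads through only one inserted copy of $\psi$, this gives the additive bound $\dep(\varphi[\psi/p])\le\dep(\varphi)+\dep(\psi)$ and needs none of the exponential slack you try to build in; the depth function is defined only after well-foundedness is in hand. To salvage a measure-based proof you would have to let the measure see more than the two ordinal values, e.g.\ track the occurrence structure of the pivot or argue about chains directly---at which point you have essentially reconstructed the paper's argument.
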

 
\begin{proof} 
We need to prove that any clean $\mathcal{L}_{\MSL}$-formula spans  finite descending $\ll$-chains. It goes by induction on formulas. The base cases for propositional variables $p$ are straightforward. The first four clauses in \Cref{def:order_MSL} only allow a formula to descend to a strictly shorter formula. We now proceed to consider for $\sub{p:=\psi}\varphi$.

\vspace{1.5mm}

Let $\sub{p:=\psi}\varphi$ be a clean $\mathcal{L}_{\MSL}$-formula. Then, both $\psi$ and $\varphi$ are clean (\Cref{remark:clean-MSL}). By induction hypothesis, $\varphi$ and $\psi$ only span finite descending $\ll$-chains. Assume that the following is a finite descending chain starting from $\varphi$:
\[
\varphi\gg\varphi_1\gg\varphi_2\gg\dots\gg q
\]
where $q\in{\bf P}$. Then, by \Cref{prop:replacement_preseve_ordering_MSL},  $\varphi[\psi/p]$ spans a descending $\ll$-chain with the following as an initial segment:
\[
\varphi[\psi/p]\gg\varphi_1[\psi/p]\gg\varphi_2[\psi/p]\gg\dots\gg q[\psi/p].
\]
With the previous analysis, we know that $\varphi[\psi/p]$ can only span finite $\gg$-chains that end at propositional variables and so have no further $\gg$-successor. The proof is completed.\qedhere
    
%Since $\sub{p:=\psi}\varphi$ is a clean formula, according to \Cref{fact:replacement_preseve_ordering_MSL}, any descending chain starting from $\varphi[\psi/p]$ contains $\varphi[\psi/p]\gg\varphi_1[\psi/p]\gg\varphi_2[\psi/p]\gg\dots\gg q[\psi/p]$ as an initial segment, where $\varphi\gg\varphi_1\gg\varphi_2\gg\dots\gg q$ is a descending chain starting from $\varphi$ and $q$ is some propositional letter. It is easy to see that $q[\psi/p]$ is either $q$ (if $q$ is different from $p$) or $\psi$ (if $p$ and $q$ are the same). By induction hypothesis, $\varphi$ and $\psi$ only span finite descending $\ll$-chains, hence the descending chains starting from $\varphi[\psi/p]$ are also finite. The proof is completed.
\end{proof}

We then define {\em the depth of a clean formula $\dep(\varphi)$} to be the length of the longest descending $\ll$-chains spanned by $\varphi$. The details are as follows:

\begin{definition}[Depth of a clean $\mathcal{L}_{\MSL}$-formula]\label{def:formula_depth_MSL}
For a clean $\varphi\in\mathcal{L}_{\MSL}$, its depth $\dep(\varphi)$  is given by the following:
\begin{center}
    $\dep(p):=0$\quad  $\dep(\neg\varphi):=\dep(\varphi)+1$\quad    
        $\dep(\varphi_1\land\varphi_2):=\max\{\dep(\varphi_1),\dep(\varphi_2)\}+1$\vspace{1mm}\\
$\dep(\Diamond\varphi):=\dep(\varphi)+1$\quad    $\dep(\sub{p:=\psi}\varphi):=\dep(\varphi[\psi/p])+1$ 
\end{center}
\end{definition}

From the  proof for \Cref{lem:wellfounded_ordering_MSL}, we can see that $\dep(\sub{p:=\psi}\varphi)$ is upper-bounded by $\dep(\psi)+\dep(\varphi)+1$. 
For any clean $\varphi\in\mathcal{L}_{\MSL}$, its depth is always finite.  
Now we are ready to prove \Cref{thm:sub_equal_replacement_MSL}. 

\begin{proof}[Proof of \Cref{thm:sub_equal_replacement_MSL}]
It goes by induction on $\dep(\sub{p:=\psi}\varphi)$. The base case is that $\dep(\sub{p:=\psi}\varphi)=1$, which means that $\varphi[\psi/p]$ is a propositional variable. Clearly, $\sub{p:=\psi}\varphi\leftrightarrow\varphi[\psi/p]$. Now, suppose the statement is true for all $\sub{p:=\psi}\varphi$ such that $\dep(\sub{p:=\psi}\varphi)<n$, where $n>1$. We are going to show that it is also true for $\sub{p:=\psi}\varphi$ such that $\dep(\sub{p:=\psi}\varphi)=n$. To do so, let us consider the form of $\varphi$. 

    \vspace{1.5mm}

    (1). Formula $\varphi$ is a propositional variable. When $\varphi$ is $p$,  $\sub{p:=\psi}\varphi$ is equivalent to $\psi$ (by the axiom $\mathsf{(R1)}$), and $\varphi[\psi/p]$ is exactly $\psi$. When $\varphi$ is different from $p$,  $\sub{p:=\psi}\varphi$ is equivalent to $\varphi$ (by the axiom $\mathsf{(R2)}$), and $\varphi[\psi/p]$ is exactly $\varphi$. Therefore $\sub{p:=\psi}\varphi\leftrightarrow\varphi[\psi/p]$. 

      \vspace{1.5mm}

     (2). Formula $\varphi$ is  $\neg\chi$. By the axiom $\mathsf{(R3)}$,  $\sub{p:=\psi}\neg\chi\leftrightarrow \neg\sub{p:=\psi}\chi$. Then, 
     \begin{center}
$\dep(\sub{p:=\psi}\chi)=\dep(\chi[\psi/p])+1 =\dep(\neg\chi[\psi/p]) =\dep(\sub{p:=\psi}\varphi)-1 $   
     \end{center}
     By induction hypothesis, it holds that $\sub{p:=\psi}\chi\leftrightarrow \chi[\psi/p]$. Now, we have $\sub{p:=\psi}\varphi\leftrightarrow  \varphi[\psi/p]$.

      \vspace{1.5mm}

    (3). Formula $\varphi$ is  $\chi_1\land\chi_2$. By the axiom $\mathsf{(R4)}$, $\sub{p:=\psi}(\chi_1\land\chi_2)\leftrightarrow \sub{p:=\psi}\chi_1\land\sub{p:=\psi}\chi_2$. Also, for each $i\in\{1,2\}$, it holds that 
\begin{center}
   $ \dep(\sub{p:=\psi}\chi_i)=\dep(\chi_i[\psi/p])+1< \dep(\varphi[\psi/p])+1=\dep(\sub{p:=\psi}\varphi)$
\end{center}
    Then, by induction hypothesis, it holds immediately that  $\sub{p:=\psi}\chi_i \leftrightarrow \chi_i[\psi/p]$. Also, since $ \chi_1[\psi/p]\land\chi_2[\psi/p]$ is exactly $\varphi[\psi/p]$, we can obtain $\sub{p:=\psi}\varphi \leftrightarrow \varphi[\psi/p]$.

  \vspace{1.5mm}

(4). Formula $\varphi$ is  $\Diamond\chi$. It follows from  $\mathsf{(R5)}$ that  $\sub{p:=\psi}\Diamond\chi\leftrightarrow \Diamond\sub{p:=\psi}\chi$. Moreover,  
\begin{center}
$\dep(\sub{p:=\psi}\chi)=\dep(\Diamond\chi[\psi/p])=\dep(\sub{p:=\psi}\varphi)-1 $
\end{center}
By induction hypothesis, it holds that $\sub{p:=\psi}\chi\leftrightarrow \chi[\psi/p]$. Now, we have $ \sub{p:=\psi}\varphi\leftrightarrow \varphi[\psi/p]$.
 
 \vspace{1.5mm}

(5). Formula $\varphi$ is  $\sub{q:=\chi_1}\chi_2$. Since $\sub{p:=\psi}\varphi$ is clean, we can  assume that $\varphi$ is also clean (\Cref{remark:clean-MSL}). Now, it is enough to prove the following equivalences:
    \begin{align*}
        \sub{p:=\psi}\sub{q:=\chi_1}\chi_2 &\leftrightarrow \sub{p:=\psi}(\chi_2[\chi_1/q])\\
        &\leftrightarrow (\chi_2[\chi_1/q])[\psi/p]\\
        &\leftrightarrow (\sub{q:=\chi_1}\chi_2)[\psi/p]
    \end{align*}
One can verify that $\dep(\sub{q:=\chi_1}\chi_2)<\dep(\sub{p:=\psi}\sub{q:=\chi_1}\chi_2)=n$. Therefore, $\sub{q:=\chi_1}\chi_2\leftrightarrow \chi_2[\chi_1/q]$. Hence, the first and the last equivalences hold. To prove the second equivalence, it suffices to show that 
    \begin{center}
     $\dep(\sub{p:=\psi}(\chi_2[\chi_1/q]))<n$.   
    \end{center} 
    By Definition \ref{def:order_MSL}, it holds that $\varphi\gg \chi_2[\chi_1/q]$. Then, it follows from  \Cref{prop:replacement_preseve_ordering_MSL} that $\varphi[\psi/p]\gg (\chi_2[\chi_1/q])[\psi/p]$. Hence, we have the following:
\begin{center}
 $n = \dep(\sub{p:=\psi}\varphi)
        = \dep(\varphi[\psi/p])+1
        = \dep((\chi_2[\chi_1/q])[\psi/p])+2
        = \dep(\sub{p:=\psi}(\chi_2[\chi_1/q]))+1$
\end{center}
So, $\dep(\sub{p:=\psi}(\chi_2[\chi_1/q]))<n$,
    and the second equivalence holds. The proof is completed.\qedhere
\end{proof}

Now we have shown the main result of this section, which clarifies the  relationship between $\sub{p:=\psi}\varphi$ and  $\varphi[\psi/p]$. It is important to emphasize that the said comparison has the language $\mathcal{L}_{\MSL}$ as a common basis, in that the replacement operation is defined for $\mathcal{L}_{\MSL}$ (\Cref{def:variable_replacement_MSL}). It would also be meaningful to compare $\mathcal{L}_{\MSL}$ with the language extending $\mathcal{L}_{\ML}$ with the replacements (but not containing substitution operators) \cite{tangwen}.

To end this section, let us note that \Cref{thm:sub_equal_replacement_MSL} holds broader significance beyond its immediate context. For instance, it offers an alternative approach to prove \Cref{prop:MSL=ML-semantics} by induction on the depth of formulas, which we will leave as an exercise to the reader. Also, many of the notions that were used to prove the result are crucial, and we will revisit and generalize them in later sections.  We are now ready to move to the more powerful setting containing the iterative generalizations of the single-step substitutions.

%Besides clarifying the relation between $\sub{p:=\psi}\varphi$ and $\varphi[\varphi/p]$, \Cref{thm:sub_equal_replacement_MSL} also has many other applications. For instance, it can provide a new method to show \Cref{thm:MSL=ML} by induction on the depth of formulas, but we leave this as an exercise to the reader. Also, many of the notions that were used to prove the result are important, and we will revisit and generalize them in later sections.  For now, let us move to the more powerful setting containing the iterative generalizations of the single-step substitutions.

\section{A modal logic for iterative substitutions}\label{misl}

This part explores the augmented {\em Modal Iterative Substitution Logic} ($\MISL$) with iterative substitution operators. The language of the logic is proposed in  \cite{Johan-fixedpoint2021}, and we will proceed to explore its formal properties. As we will see, the iterative computations make the logic much more complicated than $\MSL$. This section  will present various validities to show how the iterative operators work and formally show that the backward induction introduced in Scenario 2 of \Cref{sec:intro} can be characterized by $\MISL$. In later sections, we will look into the properties of $\MISL$, involving expressiveness and computational behavior, and study its connections with some relevant frameworks. 

The  {\em language $\mathcal{L}_{\MISL}$ for $\MISL$} extends $\mathcal{L}_{\MSL}$ with iterative substitution operators
\[
\sub{(p:=\psi)^*}\varphi
\]
expressing that {\em there is some $n\in\mathbb{N}$ such that after we iteratively substitute the truth set of $p$ with that of $\psi$ for $n$ times, $\varphi$ is the case}. Precisely, the truth condition is given by the following:
\begin{center}
  $\mmodel,s\vDash_{\MISL} \sub{(p:=\psi)^*}\varphi \;\Leftrightarrow\; \mmodel|_{(p:=\psi)^n}, s\vDash_{\MISL} \varphi$  {\it for some}  $n\in\mathbb{N}$,   
\end{center}
where $\mmodel|_{(p:=\psi)^n} = (W,R,V|_{(p:=\psi)^n})$ disagrees with  $\mmodel$ only on the valuation of $p$, and $V|_{(p:=\psi)^n}(p)$ is defined in the following inductive manner:
\begin{center}
 $V|_{(p:=\psi)^0}(p) := V(p)$ \vspace{1mm} \\
$V|_{(p:=\psi)^{n+1}}(p):= \{s\in W:\mmodel|_{(p:=\psi)^{n}},s\vDash_{\MISL} \psi\}$
\end{center}
So, for any natural number $n\in\mathbb{N}$, $\mmodel|_{(p:=\psi)^n}$ is obtained by $n$ model transformations starting from $\mmodel$. 

We use $[(p:=\psi)^*]$ as the dual of $\sub{(p:=\psi)^*}$, and so its truth condition is as follows:
\begin{center}
  $\mmodel,s\vDash_{\MISL} [(p:=\psi)^*]\varphi\; \Leftrightarrow\;   \mmodel|_{(p:=\psi)^n}, s\vDash_{\MISL} \varphi$ {\it for all}  $n\in\mathbb{N}$.
\end{center}
Unlike the case of $\sub{p:=\psi}\varphi$, iterative substitution operators are not self-dual, i.e.,   $[(p:=\psi)^*]\varphi\leftrightarrow \sub{(p:=\psi)^*}\varphi$ fails in general.

To close the part, let us introduce some syntactic notions.  To distinguish from iterative substitution operators, those $\sub{p:=\psi}$ and $[p:=\psi]$ are termed as {\em simple substitution operators}. Moreover, both the iterative and the simple ones  are called {\em substitution operators}. The notion of {\em subformulas for $\MISL$} extends that for $\MSL$ with the following: 
\[
\mathsf{Sub}(\sub{(p:=\psi)^*}\varphi)=\mathsf{Sub}(\psi)\cup \mathsf{Sub}(\varphi)\cup \{\sub{(p:=\psi)^*}\varphi\}.
\]

 For a formula $\sub{(p:=\psi)^*}\varphi$,  $p$ is the \emph{pivot} of the iterative substitution operator, but in contrast to the case of simple substitution operators, we define the \emph{scope} of operator $\sub{(p:=\psi)^*}$ as the whole formula $\sub{(p:=\psi)^*}\varphi$. Finally, let us introduce the following   notion of `depth of iteration' $\mathsf{DOI}(\varphi)$:
\begin{center}
 $\mathsf{DOI}(p)=0$,\quad    $\mathsf{DOI}(\neg\varphi)=\mathsf{DOI}(\varphi)$, \quad $\mathsf{DOI}(\varphi\land\psi)=\max\{\mathsf{DOI}(\varphi),\mathsf{DOI}(\psi)\}$, \quad $\mathsf{DOI}(\Diamond\varphi)=\mathsf{DOI}(\varphi)$, \vspace{1mm} \\
 $\mathsf{DOI}(\sub{p:=\psi}\varphi)=\max\{\mathsf{DOI}(\varphi),\mathsf{DOI}(\psi)\}$,\quad  $\mathsf{DOI}(\sub{(p:=\psi)^*}\varphi)=\max\{\mathsf{DOI}(\varphi),\mathsf{DOI}(\psi)+1\}$, 
\end{center} 
This corresponds to the number of layers of iterative operators involved in a formula. In the next part, we explore more validities of the logic.

\subsection{Validities of \texorpdfstring{$\MISL$}{}}\label{sec:validities-misl}

Since  $\MISL$ is a direct extension of  $\MSL$, all validities of the latter still hold in the new setting, including the recursion axioms for the simple substitution operators. Also, as we shall see, the generalizations of  the bound variable renaming rule  (\ref{eqn:MSL_bound_variable_renaming}) and  \Cref{thm:sub_equal_replacement_MSL} for $\MISL$  are also true. Moreover, the following schemata are valid: 
\begin{proposition}\label{prop:msl_validities_in_misl}
The following formulas involving iterative substitution operators are valid:
    \begin{equation*}%\label{eqn:recursion_axioms_MISL}
    \begin{aligned}
    \sub{(p:=\psi)^*}q&\leftrightarrow q,\quad\text{ if }q\text{ is not }p.\\
 \sub{(p:=\psi)^*}p& \leftrightarrow p\lor \sub{p:=\psi;(p:=\psi)^*}p\\
 [(p:=\psi)^*](\varphi\to [(p:=\psi)]\varphi)& \leftrightarrow (\varphi\to [(p:=\psi)^*]\varphi)\\
       \sub{(p:=\psi)^*}\neg\varphi&\leftrightarrow \neg[(p:=\psi)^*]\varphi \\
      \sub{(p:=\psi)^*}(\varphi_1\land\varphi_2) & \rightarrow \sub{(p:=\psi)^*}\varphi_1\land\sub{(p:=\psi)^*}\varphi_2\\
    \sub{(p:=\psi)^*}\Diamond\varphi &\leftrightarrow \Diamond\sub{(p:=\psi)^*}\varphi
        \end{aligned}
    \end{equation*}
\end{proposition}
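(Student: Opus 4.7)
The plan is to establish each of the six clauses by unfolding the semantics of $\sub{(p:=\psi)^*}\varphi$ and $[(p:=\psi)^*]\varphi$ into explicit quantification over the stage models $\mmodel_n := \mmodel|_{(p:=\psi)^n}$, and then arguing at the meta-level. Before handling the clauses, I would first record a small preliminary, proved by a direct induction on $n$ from the defining recursion of $V|_{(p:=\psi)^n}(p)$: $\mmodel_n$ and $\mmodel$ agree on the valuation of every $q\neq p$, and $(\mmodel_n)|_{p:=\psi} = \mmodel_{n+1}$, which in turn yields the composition law $(\mmodel_n)_k = \mmodel_{n+k}$ for all $n,k\in\mathbb{N}$.

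With this in hand, four of the six clauses reduce to short observations. The first holds because each $\mmodel_n$ assigns the same truth set to $q$ as $\mmodel$. The fourth is the standard duality ``$\exists n.\,\mmodel_n\vDash\neg\varphi$ iff $\neg\forall n.\,\mmodel_n\vDash\varphi$''. The fifth uses that a single $n$ witnessing $\sub{(p:=\psi)^*}(\varphi_1\land\varphi_2)$ witnesses each of $\sub{(p:=\psi)^*}\varphi_1$ and $\sub{(p:=\psi)^*}\varphi_2$ separately. The sixth is a quantifier swap: both sides unfold to ``$\exists n\,\exists v$ with $Rsv$ and $\mmodel_n, v\vDash\varphi$''. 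The second clause is obtained by splitting the existential over $n$ into the cases $n=0$ (giving $p$ at $\mmodel$) and $n\geq 1$ (where the composition law converts the remaining $n-1$ iterations into $\sub{p:=\psi}\sub{(p:=\psi)^*}p$ evaluated at $\mmodel$), and then reading the same case split backwards.

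The main obstacle is the third clause, the induction-like axiom familiar from propositional dynamic logic. For the $\rightarrow$ direction I would argue by induction on $n$: assuming the LHS and $\varphi$ at $\mmodel_0$, the LHS instance at each stage $n$ lets us transport $\varphi$ from $\mmodel_n$ to $\mmodel_{n+1}$, so $\varphi$ holds at every $\mmodel_n$, which is exactly $[(p:=\psi)^*]\varphi$. The converse direction is the subtle one, since the RHS only anchors its universal condition at $\mmodel_0$, whereas the LHS requires a one-step implication at every intermediate stage. My plan here is to re-root the argument at $\mmodel_n$ using the composition law $(\mmodel_n)_k = \mmodel_{n+k}$, rewriting the hypothesis as a stage-local statement from which ``$\varphi$ at $\mmodel_n$ implies $\varphi$ at $\mmodel_{n+1}$'' can be extracted by instantiating $k=1$. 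I expect this step to demand the most care, and to be where the precise interplay between the iteration operator and its one-step restriction has to be pinned down.
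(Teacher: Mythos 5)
Your preliminary composition law $(\mmodel_n)_k=\mmodel_{n+k}$ (where $\mmodel_n:=\mmodel|_{(p:=\psi)^n}$) is correct, and with it your arguments for the first, second, fourth, fifth and sixth clauses all go through; note that the paper states this proposition without proof, so there is no official argument to compare against, and your semantic unfolding is the natural route for those five clauses.

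The genuine gap is exactly the step you flag as delicate: the right-to-left direction of the third clause. Your re-rooting plan cannot succeed, because the hypothesis $\varphi\to[(p:=\psi)^*]\varphi$ is anchored at the initial model $\mmodel_0=\mmodel$ only: when $\varphi$ fails at $\mmodel_0$ it holds vacuously and yields no information about later stages, and the composition law only lets you unfold $[(p:=\psi)^*]\varphi$ as evaluated at $\mmodel_n$ --- it does not hand you an implication whose antecedent is ``$\varphi$ at $\mmodel_n$''. In fact that direction is not valid. Take $\varphi:=p$, $\psi:=\neg p$, and let $\mmodel$ consist of a single irreflexive state $s$ with $p$ false; then the truth value of $p$ at $s$ oscillates: false at stage $0$, true at stage $1$, false at stage $2$. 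The right-hand side $p\to[(p:=\neg p)^*]p$ is vacuously true at $(\mmodel,s)$, while the left-hand side $[(p:=\neg p)^*](p\to[p:=\neg p]p)$ fails, since at stage $1$ we have $p$ true but $[p:=\neg p]p$ false. So only the left-to-right implication --- the $\PDL$-style induction schema, which your induction on $n$ does establish correctly --- is valid; the biconditional as stated cannot be proved, and the honest conclusion of your write-up should be to prove the third clause as a one-directional implication and flag the discrepancy with the statement, rather than to keep searching for a proof of the converse.
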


 Note that the second last formula is only one-directional. Some patterns above are similar to that of the recursion axioms in the proof system {\bf MSL}, whereas the others have typical forms for iterative operations like those in propositional dynamic logic $\PDL$ \cite{pdl-bood}. Let us now take a closer look at the iterative substitution operators. 

As stated, an iterative substitution operator $\sub{(p:=\psi)^*}$ leads to a  sequence of transformations of the truth set of  $p$, starting from the original $V(p)$ given by a model. Generally speaking, the proposition $p$ in $\sub{(p:=\psi)^*}$ functions as both the starting point of the transformation sequence  and the pivot of each single-step transformation  $\sub{p:=\psi}$. Intuitively, when $\psi$ contains $p$, the occurrences of $p$ in $\psi$ in the first $\sub{p:=\psi}$ of the sequence are free, while those in the later stages are bound. To make this clear, let us first introduce the following: 
\begin{definition}[Normal $\mathcal{L}_{\MISL}$-formulas]\label{def:normal_formula}
An $\MISL$-formula is  \emph{normal} if every iterative substitution $(p:=\psi)^*$ follows immediately after a single-step initialization $p:=\psi_0$.
\end{definition}

For instance, $\sub{p:=\bot;(p:=\Box p)^*}p$ is a normal formula, while $\sub{p:=\bot;q:=\top;(p:=\Box p)^*}p$ is not. Given a normal formula, its subformulas might not be normal.\footnote{For instance, formula $\sub{p:=\neg p;(p:=\Box p)^*}p$ is normal, but $\sub{(p:=\Box p)^*}p$ is not.}  In a normal formula, each iterative substitution is a part of $p:=\psi_0;(p:=\psi)^*$. Now, we can generalize the notions of {\em free occurrences}, {\em free variables}, {\em bound occurrences} and  {\em bound variables} for $\MSL$ (Definition \ref{def:free-bound-msl}) to {\em normal formulas}.  For instance, in formula $\sub{p:=p\lor q; (p:=\Box p)^*} p$, only the second occurrence of $p$ is free, while its other occurrences are bound.  Moreover, an $\MISL$-formula is {\em clean} if it is a normal formula not containing any propositional letter that is both free and bound.

Just as the case in $\MSL$, we can  turn a formula of $\mathcal{L}_{\MISL}$ to a clean one. For instance, $\sub{(p:=\Box p)^*}p$ is equivalent to $\sub{(q:= p);(q:=\Box q)^*}q$. Given a formula of $\mathcal{L}_{\MISL}$, to obtain a desired clean formula, we can first apply the following normalization rule 
\begin{equation}\label{eqn:MISL_normalization}\tag{Normalization}
    \sub{(p:=\psi)^*}\varphi\leftrightarrow \sub{p:=p; (p:=\psi)^*}\varphi
\end{equation}
that transforms any $\MISL$-formula to a normal one, and then apply the two bound variable renaming rules below to transform every normal $\MISL$-formula to a clean formula: 
\begin{equation}\label{eqn:MISL_bound_variable_renaming}
\tag{Renaming$_{\MISL}$}
    \begin{split}
        \sub{p:=\psi}\varphi&\leftrightarrow \sub{q:=\psi}\varphi[q/p], \\
        \sub{p:=\psi_0;(p:=\psi_1)^*}\varphi&\leftrightarrow \sub{q:=\psi_0;(q:=\psi_1[q/p])^*}\varphi[q/p]
    \end{split}
\end{equation}
where the main connective of $\varphi$ in $ \sub{p:=\psi}\varphi$ in the first equivalence is not an iterative substitution operator,\footnote{Precisely, since $\sub{p:=\psi}\varphi$ is a normal formula, the main connective of $\varphi$ is either $\neg, \land, \Diamond$, a simple substitution operator, or an iterative substitution operator with pivot $p$. But the last case is dealt with by the second renaming rule, and this is why we require  that the main connective of $\varphi$ in $\sub{p:=\psi}\varphi$ is {\em not} an iterative operator.} the propositional variable $q$ is a fresh variable in both the principles, and formula $\varphi[q/p]$ is obtained by replacing every free occurrence of the variable $p$ in $\varphi$ with an occurrence of $q$, which is defined in the following manner: 
\begin{definition}[Replacement of variables in a normal formula]\label{def:variable_replacement_MISL}
    For any normal formulas $\varphi,\psi\in\mathcal{L}_\MISL$,  $\varphi[\psi/p]$ is defined by extending the clauses for propositional variables, $\neg,\land,\Diamond$ in   \Cref{def:variable_replacement_MSL} with the following: 
            \begin{center}
              $(\sub{q:=\chi}\varphi)[\psi/p]:=\begin{cases}
            \sub{q:=\chi[\psi/p]}(\varphi[\psi/p]) & q\text{ is different from }p\\
            \sub{q:=\chi [\psi/p]}\varphi & q\text{ is }p
        \end{cases}$  
        
        \vspace{2mm}

$(\sub{q:=\chi_0;(q:=\chi_1)^*}\varphi)[\psi/p]:=\begin{cases}
            \sub{q:=\chi_0[\psi/p];(q:=\chi_1[\psi/p])^*}(\varphi[\psi/p]) & q\text{ is different from }p\\
            \sub{q:=\chi_0[\psi/p];(q:=\chi_1)^*}\varphi & q\text{ is }p
        \end{cases}$
        \end{center}
where  the main connective of $\varphi$ in the first clause is not an iterative substitution operator. 
\end{definition}

With the notions, we now introduce the following:

\begin{proposition}\label{prop:MISL_clean_equivalence}
The two principles of (\ref{eqn:MISL_bound_variable_renaming}) are valid. As a direct consequence, every normal formula in $\MISL$ is equivalent to a clean formula. 
\end{proposition}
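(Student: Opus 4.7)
The plan is to establish both clauses of (\ref{eqn:MISL_bound_variable_renaming}) as corollaries of a single semantic \emph{invariance lemma}, from which the second assertion of the proposition then follows by an iterative bookkeeping argument. Specifically, I would prove: if $\mframe = (W,R)$ is any frame, $V_1, V_2$ are valuations with $V_1(p) = V_2(q)$ and $V_1(r) = V_2(r)$ for every $r \in {\bf P}$ different from $p$ and $q$, and $\varphi$ is any normal $\MISL$-formula in which $q$ does not occur, then for every $w \in W$, writing $\mmodel_i := (W,R,V_i)$, one has $\mmodel_1, w \vDash_{\MISL} \varphi$ iff $\mmodel_2, w \vDash_{\MISL} \varphi[q/p]$. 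This is the $\MISL$ analogue of the semantic claim used in the proof of \Cref{prop:clean_formula}, now extended to cope with the iterative substitution constructor. I would prove it by induction on the structure of the normal formula $\varphi$.

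Granting this lemma, the first principle of (\ref{eqn:MISL_bound_variable_renaming}) follows by instantiating it at $\mmodel_1 := \mmodel|_{p:=\psi}$ and $\mmodel_2 := \mmodel|_{q:=\psi}$, since $\psi$ is evaluated on the common model $\mmodel$ and $q$ is fresh. The second (iterative) principle follows by first checking, through a short inner induction on $n \in \mathbb{N}$, that the models $\mmodel_1^n := \mmodel|_{p:=\psi_0;(p:=\psi_1)^n}$ and $\mmodel_2^n := \mmodel|_{q:=\psi_0;(q:=\psi_1[q/p])^n}$ satisfy the invariance hypothesis --- the successor step invokes the lemma on $\psi_1$ to give $\llrr{\psi_1}^{\mmodel_1^n} = \llrr{\psi_1[q/p]}^{\mmodel_2^n}$ --- and then applying the lemma to $\varphi$ to conclude that the same witnesses $n$ satisfy both sides. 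The consequence that every normal formula is equivalent to a clean one is then obtained by repeatedly applying these principles: whenever some propositional letter $p$ is both free and bound, locate a substitution operator whose pivot is $p$ and rename that pivot to a fresh variable. Since each such step strictly decreases the number of substitution operators whose pivot is also a free variable of the whole formula, the procedure terminates at a clean formula.

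I expect the main obstacle to be the inductive case of the invariance lemma for $\varphi = \sub{r:=\chi_0;(r:=\chi_1)^*}\theta$, which combines an inner induction on the iteration count with a case split on whether $r$ coincides with $p$. When $r = p$, the relevant clause of \Cref{def:variable_replacement_MISL} leaves $\chi_1$ and $\theta$ untouched, so after the initialization $p := \chi_0$ the valuations of $p$ in the two models agree but the valuations of $q$ may still differ; here I would appeal to an auxiliary \emph{agreement lemma} (two models agreeing on every propositional letter occurring in a formula agree on the truth of that formula) to handle $\chi_1$ and $\theta$, exploiting that $q$ occurs in neither. When $r$ is distinct from $p$ (and hence from $q$, since $q$ is fresh), a parallel induction on the number of iterations maintains both the invariant $V_1^n(r) = V_2^n(r)$ (updated to the common truth set of $\chi_1$ resp. $\chi_1[q/p]$, which coincide by the outer induction hypothesis) and $V_1^n(p) = V_2^n(q)$ (unchanged throughout the loop), whereupon the outer induction hypothesis applied to $\theta$ closes the case.
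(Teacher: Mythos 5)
Your proposal is correct and takes essentially the same route as the paper: your invariance lemma is exactly the claim proved inside the paper's proof (there by an outer induction on $\mathsf{DOI}$ and an inner induction on length, which your structural induction on normal formulas matches, since every appeal to the induction hypothesis is to a proper subformula), your case split on whether the iterated pivot equals $p$ together with the agreement argument for the fresh $q$ mirrors the paper's cases, and your derivation of the iterative principle by an induction on the iteration count $n$ is the paper's concluding step. The only differences are cosmetic: you also obtain the first renaming principle directly from the lemma, and your termination measure for the clean-up procedure is a slightly more careful variant of the one the paper uses.
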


\begin{proof}
 We show that the two principles of (\ref{eqn:MISL_bound_variable_renaming}) are valid. 
%The second part of the proposition directly follows by recursively applying the renaming principles to a normal formula, as each application of either renaming principles strictly reduces the intersection size of free and bounded variables.   
The proof for  the validity of the first principle, $\sub{p:=\psi}\varphi\leftrightarrow \sub{q:=\psi}\varphi[q/p]$,  is by induction on $\varphi$, and  
the argument is the same as that in the proof for Proposition \ref{prop:clean_formula}. 
We now move to showing the validity of the second principle.  
 
 Let $\sub{p:=\psi_0;(p:=\psi_1)^*}\psi_2$  be a formula and $q\in{\bf P}$ be a propositional variable not occurring in it. Let $\mframe=(W,R)$ be a frame. We are going to first claim and prove the following statement:

\vspace{1.5mm}

\noindent \textbf{Claim: }\textit{For any valuations $V_1,V_2$ such that $V_1(p)=V_2(q)$ and $V_1(r)=V_2(r)$ for any $r\in\bf{P}$ distinct from $p$ and $q$, let $\mmodel_1=(W,R,V_1)$, $\mmodel_2=(W,R,V_2)$ be two arbitrary models with the same frame and valuations $V_1, V_2$ respectively. Then for any normal formula $\varphi\in\mathcal{L}_{\MISL}$, $$\mmodel_1,w\vDash_{\MISL}  \varphi \text{ iff }   \mmodel_2,w\vDash_{\MISL}  \varphi[q/p].$$}

 We prove this statement by an outer induction on $\sf DOI(\varphi)$ and an inner induction on the length of $\varphi$, where the length of a formula means the number of symbols occurring in it. 

\vspace{1.5mm}

 (1). We begin with the basic case that $\mathsf{DOI}(\varphi)=0$, i.e., $\varphi$ does not contain any iterative substitution operator. Again, this can be proved with the same argument as that in the proof for Proposition \ref{prop:clean_formula}.

\vspace{1.5mm}

 (2). For the induction step, let $1\le m\in\mathbb{N}$ and suppose that the claim is true for (i) every formula $\psi$ with $\mathsf{DOI(\psi)}<m$   and (ii) every formula $\chi$ such that  $\mathsf{DOI(\chi)}=m$ and the length is smaller than $k$. We proceed to consider the case that $\mathsf{DOI}(\varphi)=m$ and the length of $\varphi$ is $k$. For this, we consider the form of $\varphi$. 

 Since $m>1$, $\varphi$ is not a propositional variable. The cases for Boolean connectives $\neg,\land$ and modality $\Diamond$ hold directly by  induction hypothesis. Now consider the case where $\varphi$ is $\sub{r:=\psi}\chi$. 
The reasoning for the case where the main connective of $\chi$ is not an iterative substitution operator, is similar to that in the proof for Proposition \ref{prop:clean_formula}. We need only consider the case where the main connective of $\chi$ is an iterative substitution operator, and since $\varphi$ is a normal formula, $\varphi$ must be $\sub{r:=\psi_0;(r:=\psi_1)^*}\chi$. We then split into two different cases according to whether $r$ is $p$.

\vspace{1.5mm}

 (2.1). Propositional variable $r$ is $p$. By definition, $\varphi[q/p]$ is $\sub{p:=\psi_0[q/p];(p:=\psi_1)^*}\chi$. Note that either $(i)$ $\mathsf{DOI}(\psi_0)=m$ and  the length of $\psi_0$ is smaller than $k$  or $(ii)$ $\mathsf{DOI}(\psi_0)<m$. So,  by induction hypothesis, it holds that
\begin{center}
$\mmodel_1,w\vDash_{\MISL}\psi_0$\quad iff \quad $\mmodel_2,w\vDash_{\MISL}\psi_0[q/p]$, 
\end{center}
which indicates $\llrr{\psi_0}^{\mmodel_1}=\llrr{\psi_0[q/p]}^{\mmodel_2}$. Now, we just need to show that 
\begin{center}
$\mmodel_1|_{p:=\psi_0} \vDash_{\MISL} \sub{(p:=\psi_1)^*}\chi$ \quad  iff \quad $\mmodel_2|_{p:=\psi_0[q/p]} \vDash_{\MISL} \sub{(p:=\psi_1)^*}\chi$.    
\end{center}
Since $\mmodel_1|_{p:=\psi_0}$ and $\mmodel_2|_{p:=\psi_0[q/p]}$ may only differ on the truth set of $q$ that does not occur in $\sub{(p:=\psi_1)^*}\chi$, the equivalence above holds.
\vspace{1.5mm}

 (2.2). Propositional letter $r$ is different from $p$. By definition, $\varphi[q/p]$ is $\sub{r:=\psi_0[q/p];(r:=\psi_1[q/p])^*}(\chi[q/p])$. As the case above, we know that $\llrr{\psi_0}^{\mmodel_1}=\llrr{\psi_0[q/p]}^{\mmodel_2}$. Now, it holds that 
 \begin{center}
    $\mmodel_1|_{r:=\psi_0}\vDash_{\MISL} \sub{(r:=\psi_1)^*}\chi$\quad  iff\quad  $\mmodel_1|_{r:=\psi_0}\vDash_{\MISL} \sub{(r:=\psi_1)^n}\chi$ for some $n\in\mathbb{N}$.  
 \end{center}
 When $n$ is $0$, by induction hypothesis it holds immediately that   $\mmodel_1|_{r:=\psi_0}\vDash_{\MISL}  \chi$ iff $\mmodel_2|_{r:=\psi_0[q/p]}\vDash_{\MISL}  \chi[q/p]$. We now move to the case that $n>0$. Since $\mathsf{DOI}(\psi_1)<m$, for any $0\le j\le n-1$,  $\mmodel_1|_{r:=\psi_0;(r:=\psi_1)^j},w\vDash_{\MISL}  \psi_1$ iff  $\mmodel_2|_{r:=\psi_0[q/p];(r:=\psi_1[q/p])^j},w\vDash_{\MISL}  \psi_1[q/p]$. This indicates that $V_1|_{r:=\psi_0;(r:=\psi_1)^n}(r)=V_2|_{r:=\psi_0[q/p];(r:=\psi_1[q/p])^n}(r)$. By induction hypothesis, it holds that 
 \begin{center}
$\mmodel_1|_{r:=\psi_0;(r:=\psi_1)^n},w\vDash_{\MISL}  \chi$\quad  iff \quad  $\mmodel_2|_{r:=\psi_0[q/p];(r:=\psi_1[q/p])^n},w\vDash_{\MISL}  \chi[q/p]$.
\end{center}
This completes the proof of the claim.

 Now given the claim, one can prove the following statement: 
\[
\forall n\in \mathbb{N},\, \sub{p:=\psi_0;(p:=\psi_1)^n}\varphi\leftrightarrow \sub{q:=\psi_0;(q:=\psi_1[q/p])^n}\varphi[q/p], 
\]
It is enough to show that for all $n\in \mathbb{N}$, $V_1^{(n)}:= V|_{p:=\psi_0;(p:=\psi_1)^n}$ and $V_2^{(n)}:=V|_{q:=\psi_0;(q:=\psi_1[q/p])^n}$ satisfy the condition of the claim, then the statement follows since $\varphi$ is a normal formula. To show that $V_1^{(n)}$ and $V_2^{(n)}$ agree on $r\not\in\{p,q\}$ and $V_1^{(n)}(p)=V_2^{(n)}(q)$, one can prove by induction on $n$ - note that the base case follows directly and each induction step, can be proved by combining the claim with the induction hypothesis. Summing up, the above implies that $\sub{p:=\psi_0;(p:=\psi_1)^*}\varphi\leftrightarrow \sub{q:=\psi_0;(q:=\psi_1[q/p])^*}\psi_2[q/p]$, which completes the proof.
\end{proof}

\begin{remark}\label{remark:clean-MISL}
    Similar to the case in \Cref{remark:clean-MSL}, when  saying that an $\MISL$-formula  $\varphi$ is clean, we also assume that: 
    \begin{itemize}
        \item When $\varphi$ is  $\sub{p:=\psi}\chi$  and the main connective of $\chi$ is not an iterative substitution operator, formulas $\chi$ and $\psi$ are also clean.
        \item When  $\varphi$ is  $\sub{p:=\psi_1;(p:=\psi_2)^*}\chi$, all formulas $\psi_1$, $\psi_2$ and $\chi$ are clean.
    \end{itemize}
    This can also be achieved by repeated applications of (\ref{eqn:MISL_bound_variable_renaming}).
\end{remark}

Now,  we can show that under suitable condition, the principle $\sub{p:=\psi}\varphi\leftrightarrow \varphi[\psi/p]$ stated in \Cref{thm:sub_equal_replacement_MSL} also holds in the enlarged framework $\MISL$:

\begin{theorem}\label{thm:sub_equal_replacement_MISL}    $\sub{p:=\psi}\varphi\leftrightarrow \varphi[\psi/p]$ is a validity, if both $\sub{p:=\psi}\varphi$ and $\varphi$ are clean formulas.
\end{theorem}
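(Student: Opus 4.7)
The plan is to adapt the strategy used for Theorem \ref{thm:sub_equal_replacement_MSL} to the $\MISL$-setting. The main extension is to lift the well-founded order $\ll$ from Definition \ref{def:order_MSL} and the depth function from Definition \ref{def:formula_depth_MSL} to clean $\MISL$-formulas. For the iterative operator on a clean normal formula, I would add the clauses $\psi_0, \psi_1, \chi \ll \sub{p:=\psi_0;(p:=\psi_1)^*}\chi$ and set $\dep(\sub{p:=\psi_0;(p:=\psi_1)^*}\chi) = \max\{\dep(\psi_0), \dep(\psi_1), \dep(\chi)\} + 1$, keeping the clauses for the simple substitution unchanged. Proposition \ref{prop:replacement_preseve_ordering_MSL} would then be extended with a case for the iterative operator using Definition \ref{def:variable_replacement_MISL}, and well-foundedness (Lemma \ref{lem:wellfounded_ordering_MSL}) would still go through, because the new clauses only add strictly smaller subformulas and the replacement clause is already handled by the MSL-style argument.

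With this machinery in place, the theorem is proved by induction on $\dep(\sub{p:=\psi}\varphi)$. The cases where $\varphi$ is a propositional variable, a Boolean combination, a formula of the form $\Diamond\chi$, or a simple substitution $\sub{q:=\chi_1}\chi_2$ are handled exactly as in the proof of Theorem \ref{thm:sub_equal_replacement_MSL}, using the recursion axioms $\mathsf{(R1)}$--$\mathsf{(R5)}$ (which remain valid in $\MISL$). The only genuinely new case is $\varphi = \sub{q:=\psi_0;(q:=\psi_1)^*}\chi$. I would split on whether $q = p$: when $p = q$, cleanness of $\sub{p:=\psi}\varphi$ forces $p$ to have no free occurrence in $\psi_0$, so the outer substitution is overwritten by the initializer and both sides reduce to $\varphi$.

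The substantive subcase is $p \neq q$. Here Definition \ref{def:variable_replacement_MISL} gives $\varphi[\psi/p] = \sub{q:=\psi_0[\psi/p];(q:=\psi_1[\psi/p])^*}(\chi[\psi/p])$, and I would argue semantically. Writing $\mmodel_L^n = \mmodel|_{p:=\psi}|_{q:=\psi_0;(q:=\psi_1)^n}$ and $\mmodel_R^n = \mmodel|_{q:=\psi_0[\psi/p];(q:=\psi_1[\psi/p])^n}$, I would prove by an inner induction on $n$ that $\mmodel_L^n$ and $\mmodel_R^n|_{p:=\psi}$ coincide on all propositional variables. The base case invokes the outer induction hypothesis on $\sub{p:=\psi}\psi_0$ to match the $q$-valuations; the inductive step uses the outer hypothesis on $\sub{p:=\psi}\psi_1$ together with the inner one. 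Cleanness of $\sub{p:=\psi}\varphi$ is essential: it guarantees that $q$ does not occur free in $\psi$, so the truth set of $\psi$ coincides in $\mmodel$ and in each $\mmodel_R^n$, which is needed to identify $\mmodel_R^n|_{p:=\psi}$ with the ``doubly updated'' LHS model. Finally, applying the outer induction hypothesis to $\sub{p:=\psi}\chi$ translates satisfaction of $\chi$ in $\mmodel_L^n$ into satisfaction of $\chi[\psi/p]$ in $\mmodel_R^n$, and existentially quantifying over $n$ yields the desired equivalence.

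The main obstacle I anticipate is the bookkeeping for the extended depth function: every invocation of the induction hypothesis (for $\sub{p:=\psi}\psi_0$, $\sub{p:=\psi}\psi_1$, and $\sub{p:=\psi}\chi$) must be at a strictly smaller depth, which forces the new depth clause and the extended Proposition \ref{prop:replacement_preseve_ordering_MSL} to interact correctly with the iterative replacement clause of Definition \ref{def:variable_replacement_MISL}. A secondary subtlety is that preservation of cleanness under replacement must be tracked in the iterative case just as in the simple one (the analogue of the chain of free/bound variable equalities appearing after Definition \ref{def:order_MSL}), so that the subformulas to which the induction hypothesis is applied themselves remain clean.
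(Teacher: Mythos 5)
Your argument is sound and reaches the same theorem, but it diverges from the paper's proof in both the supporting machinery and the key case. The paper extends the order $\ll$ by putting every finite unfolding below the starred formula, $\sub{p:=\psi_0;(p:=\psi_1)^n}\chi\ll\sub{p:=\psi_0;(p:=\psi_1)^*}\chi$, so $\dep$ becomes an ordinal ($\geq\omega+1$) and the theorem is proved by \emph{transfinite} induction on $\dep(\varphi[\psi/p])$; in the case $\varphi=\sub{q:=\chi_1;(q:=\chi_3)^*}\chi_4$ it simply applies the induction hypothesis to each unfolding $\sub{q:=\chi_1;(q:=\chi_3)^m}\chi_4$, uses the fact that replacement commutes with unfolding, and re-assembles via the existential over $m$. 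You instead keep the depth finite by placing only the components $\psi_0,\psi_1,\chi$ below the star, and in the key case you run a stage-by-stage comparison of the two update sequences ($\mmodel_L^n$ versus $\mmodel_R^n|_{p:=\psi}$), invoking the induction hypothesis on $\sub{p:=\psi}\psi_0$, $\sub{p:=\psi}\psi_1$ and $\sub{p:=\psi}\chi$; I checked the depth bookkeeping ($\dep(\sub{p:=\psi}\varphi)=\max\{\dep(\psi_0[\psi/p]),\dep(\psi_1[\psi/p]),\dep(\chi[\psi/p])\}+2$ versus $\dep(\psi_i[\psi/p])+1$, $\dep(\chi[\psi/p])+1$) and it works, as does the semantic inner induction, since cleanness gives $q\notin fv(\psi)$ so the $p$-valuation matches on both sides. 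What your route buys is the avoidance of ordinals and transfinite induction and a more self-contained semantic argument; what it costs is that you must re-prove well-foundedness and the analogue of \Cref{prop:replacement_preseve_ordering_MSL} for your component-based order, which the paper gets (for its unfolding-based order) in \Cref{prop:replacement_preserve_ordering_MISL} and \Cref{lem:wellfounded_ordering_MISL}.

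Two small caveats. First, your claim that the preservation step is ``already handled by the MSL-style argument'' is not literally true when the pivot of an inner starred formula coincides with the outer $p$: there $\varphi[\psi/p]=\sub{p:=\psi_0[\psi/p];(p:=\psi_1)^*}\chi$ has components $\psi_0[\psi/p],\psi_1,\chi$, not $\psi_1[\psi/p],\chi[\psi/p]$, so the statement $A\gg B\Rightarrow A[\psi/p]\gg B[\psi/p]$ fails as written; this is harmless under the standing convention of \Cref{remark:clean-MISL} (pairwise distinct pivots), but it needs to be said, since plain cleanness alone does not exclude repeated pivots. Second, in your $p=q$ subcase it is cleanness of $\varphi$ (required by the theorem's hypothesis), not of $\sub{p:=\psi}\varphi$, that forces $p\notin fv(\psi_0)$; with that correction the reduction of both sides to $\varphi$ goes through as you describe.
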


%\yaxin{2024/5/5: the corrected proof starts here.}
We will prove it in the remainder of this part, following a similar method to the proof of \Cref{thm:sub_equal_replacement_MSL} in \Cref{sec:compare-two-styles-of-substitution}. Let us first  define the following order  on clean $\MISL$-formulas, which is an extension of the one given by \Cref{def:order_MSL}: 
\begin{definition}[Order  on clean formulas of $\MISL$]\label{def:order_MISL}
We define a binary relation $\ll$ on  clean $\mathcal{L}_\MISL$-formulas: 
\begin{center}
  $\varphi\ll\neg\varphi$, \quad   $\varphi\ll\varphi\land\psi$, \quad     $\varphi\ll\psi\land\varphi$,\quad $\varphi\ll\Diamond\varphi$, \vspace{2mm} \\
     $\varphi[\psi/p]\ll\sub{p:=\psi}\varphi$, \quad 
$\sub{p:=\psi_0; (p:=\psi_1)^n}\varphi\ll\sub{p:=\psi_0; (p:=\psi_1)^*}\varphi$,   
\end{center}
where $n\in\mathbb{N}$, the main connective of the $\varphi$ in the clause for $\varphi[\psi/p]$ is not an iterative substitution operator, and the replacement $\varphi[\psi/p]$ is defined in \Cref{def:variable_replacement_MISL}. Again, we use $\gg$ for the converse of $\ll$.
\end{definition} 

With the condition used in the clause for $\sub{p:=\psi}\varphi$, one can check that  whenever the formula to the right-hand-side of $\ll$ is clean, the formula to the left-hand-side of $\ll$ is also clean. 
Now, we can prove the following result that is similar to \Cref{prop:replacement_preseve_ordering_MSL}:
\begin{proposition}\label{prop:replacement_preserve_ordering_MISL}
Let $\sub{p:=\psi;(p:=\psi)^*}\varphi$ and $\sub{p:=\psi;(p:=\psi)^*}\chi$ be clean. If $\varphi\gg\chi$, then $\varphi[\psi/p]\gg\chi[\psi/p]$. 
\end{proposition}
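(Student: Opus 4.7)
The plan is to argue by case analysis on the clause of Definition \ref{def:order_MISL} that witnesses $\varphi\gg\chi$. For the first four clauses (the Boolean and $\Diamond$ cases) and for the simple-substitution clause $\chi=\varphi'[\psi'/q]$ with $\varphi=\sub{q:=\psi'}\varphi'$, the argument proceeds verbatim as in Proposition \ref{prop:replacement_preseve_ordering_MSL}: the relevant clauses of Definition \ref{def:variable_replacement_MISL} agree with those of Definition \ref{def:variable_replacement_MSL}, so no new reasoning is required. The crux is therefore the genuinely new clause, where $\varphi=\sub{q:=\psi'_0;(q:=\psi'_1)^*}\varphi'$ and $\chi=\sub{q:=\psi'_0;(q:=\psi'_1)^n}\varphi'$ for some $n\in\mathbb{N}$.

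For this clause I would split on whether the pivot $q$ equals the outer pivot $p$. If $q$ is $p$, the second clause of Definition \ref{def:variable_replacement_MISL} gives $\varphi[\psi/p]=\sub{p:=\psi'_0[\psi/p];(p:=\psi'_1)^*}\varphi'$. Since $\chi$ does not contain a topmost iterative substitution, I would read $\sub{p:=\psi'_0;(p:=\psi'_1)^n}$ as a stack of $n+1$ nested simple substitutions and apply the simple-substitution replacement clause successively; because each nested pivot remains $p$, the clause only rewrites the outermost substitutend, yielding $\chi[\psi/p]=\sub{p:=\psi'_0[\psi/p];(p:=\psi'_1)^n}\varphi'$. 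The iterative clause of Definition \ref{def:order_MISL} then furnishes $\chi[\psi/p]\ll\varphi[\psi/p]$. If $q$ is different from $p$, the replacement distributes uniformly through every constituent: one gets $\varphi[\psi/p]=\sub{q:=\psi'_0[\psi/p];(q:=\psi'_1[\psi/p])^*}\varphi'[\psi/p]$ and, by the same iterated unfolding, $\chi[\psi/p]=\sub{q:=\psi'_0[\psi/p];(q:=\psi'_1[\psi/p])^n}\varphi'[\psi/p]$, so the iterative clause of Definition \ref{def:order_MISL} applies again.

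The main obstacle I anticipate is precisely the sub-case $q=p$: one must justify that unfolding $\sub{p:=\psi'_0;(p:=\psi'_1)^n}$ as a chain of simple substitutions and then applying the simple-substitution replacement rule produces exactly the shape expected by the iterative clause of Definition \ref{def:order_MISL}. This is where the cleanness hypothesis, together with the convention in Remark \ref{remark:clean-MISL} that cleanness is preserved under subformulas, becomes essential: it prevents the repeated inner substitutend $\psi'_1$ from being inadvertently altered by the outer replacement $[\psi/p]$ and rules out accidental capture of free variables in $\varphi'$. Once this bookkeeping is verified, the remainder of the proof is purely mechanical reading of Definitions \ref{def:order_MISL} and \ref{def:variable_replacement_MISL}.
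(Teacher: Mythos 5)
Your proposal is correct and follows essentially the same route as the paper's proof: reduce to the new clause where $\varphi$ is $\sub{q:=\psi_0';(q:=\psi_1')^*}\varphi'$ and $\chi$ is $\sub{q:=\psi_0';(q:=\psi_1')^n}\varphi'$, split on whether $q$ equals $p$, compute both replacements from \Cref{def:variable_replacement_MISL} (reading the finite power as nested simple substitutions), and close with the iterative clause of \Cref{def:order_MISL}, with the remaining cases inherited from \Cref{prop:replacement_preseve_ordering_MSL}. In fact, in the $q=p$ subcase your computation ($\chi[\psi/p]=\sub{p:=\psi_0'[\psi/p];(p:=\psi_1')^n}\varphi'$, scope untouched) adheres to \Cref{def:variable_replacement_MISL} more faithfully than the formulas displayed in the paper's own proof, which also apply $[\psi/p]$ to the scope; the conclusion is unaffected either way.
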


\begin{proof}
Since $\sub{p:=\psi;(p:=\psi)^*}\varphi$ and $\sub{p:=\psi;(p:=\psi)^*}\chi$ are clean, based on (\ref{eqn:MISL_bound_variable_renaming}) we can assume that all formulas  $\varphi$, $\psi$, $\chi$, $\varphi[\psi/p]$ and $\chi[\psi/p]$ are clean (\Cref{remark:clean-MISL}). Now, assume that  $\varphi\gg\chi$. We only need to consider the case that $\varphi$ and $\chi$ are of the forms  $\sub{q:=\psi_0;(q:=\psi_1)^*}\theta$ and $\sub{q:=\psi_0;(q:=\psi_1)^n}\theta$ respectively, as other cases can be proved with the same argument as that in the proof for \Cref{prop:replacement_preseve_ordering_MSL}. We now consider two different situations.

    \vspace{1.5mm}

    (1). Proposition $q$ is different from $p$. Then, $\varphi[\psi/p]$ is $\sub{q:=\psi_0[\psi/p];(q:=\psi_1[\psi/p])^*}(\theta[\psi/p])$, and $\chi[\psi/p]$ is $\sub{q:=\psi_0[\psi/p];(q:=\psi_1[\psi/p])^n}(\theta[\psi/p])$. By the definition of $\gg$, $\varphi[\psi/p]\gg \chi[\psi/p]$. 

    \vspace{1.5mm}

    (2). Proposition $q$ is $p$. Then, formula $\varphi[\psi/p]$ is $\sub{q:=\psi_0[\psi/p];(q:=\psi_1)^*}(\theta[\psi/p])$, and formula $\chi[\psi/p]$ is $\sub{q:=\psi_0[\psi/p];(q:=\psi_1)^n}(\theta[\psi/p])$. By the definition of $\gg$, $\varphi[\psi/p]\gg \chi[\psi/p]$. 

    \vspace{1.5mm}
    
To sum up, $\varphi[\psi/p]\gg\chi[\psi/p]$ always holds. 
\end{proof}

\begin{lemma}\label{lem:wellfounded_ordering_MISL}
    The order $\ll$ defined in \Cref{def:order_MISL} is well-founded. 
\end{lemma}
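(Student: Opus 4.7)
The plan is to mimic the proof of \Cref{lem:wellfounded_ordering_MSL}, but to layer an outer induction on $\mathsf{DOI}(\varphi)$ on top of it in order to accommodate the new iterative-substitution clause. I would show by induction on $\mathsf{DOI}(\varphi)$ that every clean $\mathcal{L}_{\MISL}$-formula admits only finite descending $\ll$-chains. The base case $\mathsf{DOI}(\varphi)=0$ coincides with the $\MSL$ setting: no iterative operator occurs in $\varphi$, so the relevant fragment of $\ll$ agrees with the one of \Cref{def:order_MSL}, and \Cref{lem:wellfounded_ordering_MSL} applies directly.

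For the inductive step, assume the claim for all clean formulas of DOI strictly less than $n$, and take a clean $\varphi$ with $\mathsf{DOI}(\varphi)=n$. I would proceed by a secondary structural induction on $\varphi$. Propositional variables are $\ll$-minimal. For $\neg,\land,\Diamond$, each descent lands at a strict subformula whose DOI is at most $n$ but whose structural complexity is strictly smaller, so the inner IH applies. For the simple-substitution case $\varphi=\sub{p:=\chi_1}\chi_2$ (with the main connective of $\chi_2$ not iterative), any chain begins with the step $\chi_2[\chi_1/p]\ll\varphi$; invoking \Cref{prop:replacement_preserve_ordering_MISL} exactly as in the proof of \Cref{lem:wellfounded_ordering_MSL}, a finite descending chain $\chi_2\gg\dots\gg q$ supplied by the inner IH lifts to a finite chain $\chi_2[\chi_1/p]\gg\dots\gg q[\chi_1/p]$, and if the terminus is the variable $p$, the chain continues into $\chi_1$, whose descending chains are already finite by the inner IH.

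The genuinely new case is $\varphi=\sub{p:=\psi_0;(p:=\psi_1)^*}\chi$, where the only allowed descent is to $\sub{p:=\psi_0;(p:=\psi_1)^k}\chi$ for some $k\in\mathbb{N}$. A direct computation from the definition of $\mathsf{DOI}$ gives $\mathsf{DOI}(\sub{p:=\psi_0;(p:=\psi_1)^k}\chi)=\max\{\mathsf{DOI}(\chi),\mathsf{DOI}(\psi_0),\mathsf{DOI}(\psi_1)\}<\max\{\mathsf{DOI}(\chi),\mathsf{DOI}(\psi_0),\mathsf{DOI}(\psi_1)+1\}=n$, so the outer IH takes over and the remainder of any chain is finite. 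I expect the principal obstacle here to be bookkeeping rather than conceptual: one must verify that cleanness (in the strong sense of \Cref{remark:clean-MISL}) is preserved along every $\ll$-descent, so that both inductions remain well-typed, and that the $k$-fold unfolding of the iterative operator is correctly treated as a nested sequence of simple substitutions with the DOI computed above. These preservation properties follow from the renaming principles behind \Cref{prop:MISL_clean_equivalence}, after which the two nested inductions close without further difficulty.
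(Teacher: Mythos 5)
Your scaffold (outer induction on $\mathsf{DOI}$, inner induction on structure, with the base case and the Boolean/modal/simple-substitution cases handled as in \Cref{lem:wellfounded_ordering_MSL}) is the same as the paper's, but the step you dismiss as a ``direct computation'' is exactly where the paper has to work, and your computation is wrong. It is true that $\mathsf{DOI}(\sub{p:=\psi_0;(p:=\psi_1)^k}\chi)=\max\{\mathsf{DOI}(\chi),\mathsf{DOI}(\psi_0),\mathsf{DOI}(\psi_1)\}$ and $\mathsf{DOI}(\sub{p:=\psi_0;(p:=\psi_1)^*}\chi)=\max\{\mathsf{DOI}(\chi),\mathsf{DOI}(\psi_0),\mathsf{DOI}(\psi_1)+1\}$, but the strict inequality between these two maxima holds only when $\mathsf{DOI}(\psi_1)\ge\max\{\mathsf{DOI}(\chi),\mathsf{DOI}(\psi_0)\}$; a maximum is not strictly monotone in one coordinate when another coordinate dominates. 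For instance, the clean formula $\sub{p:=\bot;(p:=\Diamond p)^*}\sub{q:=\bot;(q:=\Box q)^*}q$ has $\mathsf{DOI}$ equal to $1$, and so does every unfolding $\sub{p:=\bot;(p:=\Diamond p)^k}\sub{q:=\bot;(q:=\Box q)^*}q$; moreover, for large $k$ the unfolding is longer than the original formula. Hence neither your outer induction hypothesis (smaller $\mathsf{DOI}$) nor your inner one (smaller structural complexity) applies to the formulas $\sub{p:=\psi_0;(p:=\psi_1)^k}\chi$ to which the iterative operator descends, and the genuinely new case of the lemma does not close.

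The paper's proof is organized precisely around this obstacle: it never applies the induction hypothesis to the unfolding as a whole, but only to the components $\psi_0$, $\psi_1$, $\chi$, each of which has either strictly smaller $\mathsf{DOI}$ or the same $\mathsf{DOI}$ and strictly smaller length. It then assembles by hand the descending chains issuing from $\sub{p:=\psi_0;(p:=\psi_1)^k}\chi$: the first descent lands at $\sub{p:=\psi_1[\psi_0/p];(p:=\psi_1)^{k-1}}\chi$, and repeated applications of \Cref{prop:replacement_preserve_ordering_MISL} lift the finite chains of $\psi_1$ (splicing in a chain of $\psi_0$ when a chain bottoms out at $p$) to finite chains of the iterated replacements --- the ``$n$-layer'' formulas $\theta[(\theta[\cdots[\psi/p]\cdots])/p]$ in the paper's notation --- and finally to a finite chain from $\chi$ with the $n$-layer formula substituted for $p$. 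This explicit layer-by-layer construction, or some replacement for it, is what your proposal is missing; without it the iterative case, which is the entire new content of the lemma relative to \Cref{lem:wellfounded_ordering_MSL}, remains unproved (your argument is sound only in the special subcase $\mathsf{DOI}(\psi_1)\ge\max\{\mathsf{DOI}(\chi),\mathsf{DOI}(\psi_0)\}$, where the unfolding really does drop in $\mathsf{DOI}$).
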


\begin{proof}
We prove that any clean $\varphi\in\mathcal{L}_{\MISL}$ only spans finite descending $\ll$-chains,  by induction, first on  $\mathsf{DOI}(\varphi)=m$ and then on its  length $k$.

   % We prove that any clean $\MISL$-formula $\varphi$ only spans finite descending $\ll$-chains, by induction, first on the number $m$ of the occurrences of $*$ in $\varphi$ and then on the {\em length} $k$ of $\varphi$, where $k$ is the number of symbols occurring in  formula $\varphi$.

    \vspace{1.5mm}

    (1). For the base case that $m=0$, using the argument in the proof for \Cref{lem:wellfounded_ordering_MSL}, we can show that any $\varphi$ can only span finite descending $\ll$-chains.

        \vspace{1.5mm}

(2).  For the induction step, let $1\le m\in\mathbb{N}$ and suppose the statement is true for $(i)$ every formula $\psi$ with $\mathsf{DOI}(\psi)<m$  and $(ii)$ every formula $\chi$ such that $\mathsf{DOI}(\chi)=m$  and its length is smaller than $k$. We consider the case that $\mathsf{DOI}(\varphi)=m$ and the length is $k$.

\vspace{1.5mm}

 (2.1). When $\varphi$ is   $\neg \psi$, $\psi_1\land\psi_2$ or $\Box\psi$, it only descends along $\ll$ to strictly shorter formulas  without making the depth of iterative substitution operators bigger. By induction hypothesis, any such formula  only spans finite descending $\ll$-chains. Therefore $\varphi$ only spans finite descending $\ll$-chains. 

\vspace{1.5mm}

(2.2).  When $\varphi$ is  $\sub{p:=\psi}\chi$ and the main connective of $\chi$ is not an iterative substitution operator, the case can be proved with a similar way to that of \Cref{lem:wellfounded_ordering_MSL}. We skip the details. 

%\vspace{1.5mm}

%(2.3).   When $\varphi$ is of form $\sub{p:=\psi_0; (p:=\psi_1)^*}\chi$,  for every $n\in\mathbb{N}$, it descends to $\sub{p:=\psi_0; (p:=\psi_1)^n}\chi$. \blue{Since there are $m$ occurrences of $*$  in $\varphi$, the number of  occurrences of $*$ in  $\sub{p:=\psi_0; (p:=\psi_1)^n}\chi$ is fewer than $m$.-WHY? What if $\psi_1$ contains $*$?} By induction hypothesis, for any $n\in\mathbb{N}$, $\sub{p:=\psi_0; (p:=\psi_1)^n}\chi$ only spans finite descending chains. Hence $\varphi$ only spans finite descending $\ll$-chains. \blue{The one below}

\vspace{1.5mm}

 (2.3).  When $\varphi$ is  $\sub{p:=\psi; (p:=\theta)^*}\chi$,  for every $n\in\mathbb{N}$, it descends to $\sub{p:=\psi; (p:=\theta)^n}\chi$. For each $\delta\in\{\psi,\theta,\chi\}$,  either $(i)$ $\mathsf{DOI}(\delta)<m$ or $(ii)$ $\mathsf{DOI}(\delta)=m$ and the length  of $\delta$ is smaller than $k$. By induction hypothesis, each of them  spans finite descending $\ll$-chains, and we write the following for three finite chains spanned by them:
\begin{center}
    \begin{tabular}{l@{\quad}l}
        $\varepsilon_{\psi}$: & $\psi\gg\psi_1\gg\psi_2\gg\dots\gg q_{\psi}$ \vspace{.8mm} \\
       $\varepsilon_{\theta}$:  &  $\theta\gg\theta_1\gg\theta_2\gg\dots\gg q_{\theta}$ \vspace{.8mm} \\
       $\varepsilon_{\chi}$:  &  $\chi\gg\chi_1\gg\chi_2\gg\dots\gg q_{\chi}$\\
    \end{tabular}
\end{center}
 
By definition, $\sub{p:=\psi; (p:=\theta)^n}\chi$ descends to $(\sub{(p:=\theta)^n}\chi)[\psi/p]$, i.e., $\sub{p:=\theta[\psi/p];(p:=\theta)^{n-1}}\chi$. It follows from Proposition \ref{prop:replacement_preserve_ordering_MISL} that:
\begin{center}
$\theta[\psi/p]\gg\theta_1[\psi/p]\gg\theta_2[\psi/p]\gg\dots\gg q_{\theta}[\psi/p]$
\end{center}
As the case in Lemma \ref{lem:wellfounded_ordering_MSL}, if $q_{\theta}\not=p$, then the above is a  finite  descending $\ll$-chain spanned by $\theta[\psi/p]$, otherwise by adding  $\varepsilon_{\psi}$ to the chain above we can obtain a finite chain spanned by $\theta[\psi/p]$. We write $\varepsilon_1$ for the resulting chain.

Next, $\sub{p:=\theta[\psi/p];(p:=\theta)^{n-1}}\chi$ descends to $\sub{p:=\theta[(\theta[\psi/p])/p];(p:=\theta)^{n-2}}\chi$. Again, by Proposition \ref{prop:replacement_preserve_ordering_MISL}, 
 \begin{center}
$\theta[(\theta[\psi/p])/p]\gg\theta_1[(\theta[\psi/p])/p]\gg\theta_2[(\theta[\psi/p])/p]\gg\dots\gg q_{\theta}[(\theta[\psi/p])/p]$
\end{center}  
As the case above, we can still get a finite $\ll$-chain spanned by $\theta[(\theta[\psi/p])/p]$, denoted $\varepsilon_2$. 
 
In what follows, we call $\theta[(\theta[\psi/p])/p]$ `{\em $2$-layer $\theta[\psi/p]$}', $\theta[(\theta[(\theta[\psi/p])/p])/p]$ `{\em $3$-layer $\theta[\psi/p]$}', and so on. Repeating the reasoning above, we can get a finite $\ll$-chain $\varepsilon_n$ spanned by the $n$-layer $\theta[\psi/p]$. Now we consider the following
\begin{center}
    $\chi [\dagger/p] \gg \chi_1[\dagger/p]\gg \chi_2[\dagger/p] \gg \dots\gg q_{\chi}[\dagger/p]$
\end{center}
where $\dagger$ is the $n$-layer $\theta[\psi/p]$. Like before, if $q_{\chi}$ is different $p$, then the above is a finite  chain spanned by  $\chi [\dagger/p]$, and if $q_{\chi}$ is $p$, then  by adding $\varepsilon_n$ to the sequence above we obtain a finite chain spanned by $\chi [\dagger/p]$.  Now one can see that for any $n\in\mathbb{N}$, $\sub{p:=\psi; (p:=\theta)^n}\chi$ only spans finite descending chains. Hence $\varphi$ only spans finite descending $\ll$-chains.  This completes the proof.
\end{proof}

We then define {\em the depth of a clean formula in $\mathcal{L}_{\MISL}$ $\dep(\varphi)$} to be the (ordinal) length of the longest descending $\ll$-chains spanned by $\varphi$. The details are as follows:

\begin{definition}[Depth of a clean $\mathcal{L}_{\MISL}$-formula]\label{def:formula_depth_MISL}
For a clean $\varphi\in\mathcal{L}_{\MISL}$, its {\em ordinal depth} $\dep(\varphi)$  is defined recursively by extending the clauses for $\mathcal{L}_{\MSL}$ in Definition \ref{def:formula_depth_MSL}  with the following:
    \begin{align*}
   %     \dep(p)&:=0 \\ 
    %    \dep(\neg\varphi)&:=\dep(\varphi)+1\\    
    %    \dep(\varphi\land\psi)&:=\max\{\dep(\varphi),\dep(\psi)\}+1\\
    %    \dep(\Diamond\varphi)&:=\dep(\varphi)+1\\    \dep(\sub{p:=\psi}\varphi)&:=\dep(\varphi[\psi/p])+1\quad\\
        \dep(\sub{p:=\psi;(p:=\chi)^*}\varphi)&:={\rm sup}_{n\in\mathbb{N}}\{\dep(\sub{p:=\psi;(p:=\chi)^n}\varphi)\}+1
    \end{align*}
\end{definition}

In the definition above, $\dep(\sub{p:=\psi;(p:=\chi)^*}\varphi)$ is always no less than $\omega+1$. Now we are ready to prove \Cref{thm:sub_equal_replacement_MISL} by induction along $\ll$, which is similar to the proof of \Cref{thm:sub_equal_replacement_MSL}. 

\begin{proof}[Proof of \Cref{thm:sub_equal_replacement_MISL}]
 It goes by transfinite induction on $\dep(\varphi[\psi/p])$. The base case, as well as the cases where the main connective of $\varphi$ is $\neg,\land$ or $\Diamond$ can be proved with the same argument as that in the proof of \Cref{thm:sub_equal_replacement_MSL}. Since $\varphi$ is clean,  the main connective of $\varphi$ cannot be an iterative substitution operator. Therefore we are left with the situation where $\varphi$ is $\sub{q:=\chi_1}\chi_2$. There are two different situations. 

    \vspace{1.5mm}

 (1). The main connective of $\chi_2$ is not an iterative substitution operator, in which case $\chi_2$ is a clean formula and so $\dep(\chi_2)$ is well defined. Then again we can prove the following equivalences through the same argument as that in the proof of \Cref{thm:sub_equal_replacement_MSL}, while changing $n$ for depth of clean formulas to be an ordinal number: 
    \begin{align*}
        \sub{p:=\psi}\sub{q:=\chi_1}\chi_2 &\leftrightarrow \sub{p:=\psi}(\chi_2[\chi_1/q])\\
        &\leftrightarrow (\chi_2[\chi_1/q])[\psi/p]\\
        &\leftrightarrow (\sub{q:=\chi_1}\chi_2)[\psi/p]
    \end{align*}
    This shows that $\sub{p:=\psi}\varphi\leftrightarrow\varphi[\psi/p]$.

    \vspace{1.5mm}

 (2). The main connective of $\chi_2$ is an iterative substitution operator. Since $\varphi=\sub{q:=\chi_1}\chi_2$ is clean,  $\chi_2$ can be written as  $\sub{(q:=\chi_3)^*}\chi_4$. Now, for any $n\in\mathbb{N}$, $\dep(\varphi)>\dep(\sub{q:=\chi_1;(q:=\chi_3)^n}\chi_4)$, and by induction hypothesis, the following is valid:
    \begin{center}
      $\sub{p:=\psi}\sub{q:=\chi_1;(q:=\chi_3)^n}\chi_4\leftrightarrow(\sub{q:=\chi_1;(q:=\chi_3)^n}\chi_4)[\psi/p]$.   
    \end{center}

 Let $(\mmodel,w)$ be a pointed model. Assume that $\mmodel,w\vDash_{\MISL}   \sub{p:=\psi}\sub{q:=\chi_1;(q:=\chi_3)^*}\chi_4$. Then, for some $m\in\mathbb{N}$, $\mmodel,w\vDash_{\MISL} \sub{p:=\psi}\sub{q:=\chi_1;(q:=\chi_3)^m}\chi_4$. So, $\mmodel,w\vDash_{\MISL} (\sub{q:=\chi_1;(q:=\chi_3)^m}\chi_4)[\psi/p]$, which then gives us  $\mmodel,w\vDash_{\MISL} (\sub{q:=\chi_1;(q:=\chi_3)^*}\chi_4)[\psi/p]$. The converse can be proved in a similar way. Therefore, $\sub{p:=\psi}\sub{q:=\chi_1;(q:=\chi_3)^*}\chi_4\leftrightarrow (\sub{q:=\chi_1;(q:=\chi_3)^*}\chi_4)[\psi/p]$ is valid.  The proof is completed.
\end{proof}

\subsection{Example revisited: backward induction}\label{sec:expressing-games}

Let us now get back to the applications of $\MISL$, and establish a connection  between backward induction as described in \Cref{sec:intro} and the corresponding iterative substitution operators. With respect to the board games defined in Scenario 2, we start by introducing several key notions, which are useful for both our proof and the illustration of the advantages of our approach:

\begin{definition}[Ranks of winning positions]\label{def:winning_position_rank}
Let $\mathcal{B} = (W, R)$ be a finite game board. For any $i\in \{0,1\}$ and any $k\in\mathbb{N}$, we define $\mathsf{Win}_i^k \subseteq W$ such that for any $s\in W$, $s\in \mathsf{ Win}_i^k$  when player $i$ has a strategy to win the game $\mathcal{G}_s=(\mathcal{B},s)$ within $k$ rounds. For all $0<k\in\mathbb{N}$, the positions in $\mathsf{Win}_i^k\setminus \mathsf{Win}_i^{k-1}$ are called {\em rank-$k$ winning positions of player $i$}, which means that player $i$ has a strategy to win the game $\mathcal{G}_t$ in exactly $k$ rounds. For the case when $k=0$, the {\em rank-$0$ winning positions of player $i$} are exactly the members of $\mathsf{Win}_i^0$.  In addition, when a node $u$ is not a finite rank winning position of any player, we call it {\em a rank-$\infty$ winning position of player $0$}, which means that player $0$ has a strategy to force  $\mathcal{G}_u$ to run infinitely. 
\end{definition}

For player $1$, we  define $\mathsf{Win}_1 := \bigcup_{k\in\mathbb{N}}\mathsf{Win}_1^k$, consisting of  the winning positions of player $1$.  Now, we are ready to show that the principle $\sub{p:=\Box\bot; (p:= p\lor \Box\Diamond p)^*}p$ involved in \Cref{sec:intro} defines the set:

\begin{proposition}
Let $\mathcal{B}=(W,R)$ be a finite board and $\mmodel$ be a model extending $\mathcal{B}$ with an arbitrary valuation function. Then, in the model, $\sub{p:=\Box\bot; (p:= p\lor \Box\Diamond p)^k}p$ is true exactly at the nodes in $\mathsf{Win}_1^k$. More generally,  
\[
\sub{p:=\Box\bot; (p:= p\lor \Box\Diamond p)^*}p
\]
captures the set $\mathsf{Win}_1$ of winning positions  of player $1$. 
\end{proposition}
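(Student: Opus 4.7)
The strategy is to prove the $k$-indexed claim by induction on $k$, and then read off the $*$-version directly from the semantics of the iterative operator. Let $V_k(p)$ denote the extension of $p$ in the Kripke model obtained from $\mmodel$ by first performing the substitution $p:=\Box\bot$ and then iterating $p:=p\lor\Box\Diamond p$ exactly $k$ times. Unfolding \Cref{definition:msl-model} together with the semantics for the iterated operator given in \Cref{misl}, the formula $\sub{p:=\Box\bot;(p:=p\lor\Box\Diamond p)^k}p$ is true at $s$ precisely when $s\in V_k(p)$; so it is enough to show $V_k(p)=\mathsf{Win}_1^k$ for every $k\in\mathbb{N}$.

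The base case is immediate: $V_0(p)$ is the extension of $\Box\bot$, which coincides with the set of dead ends of $\mathcal{B}$, and these are the rank-$0$ winning positions of player $1$ by \Cref{def:winning_position_rank}. For the inductive step, assuming $V_k(p)=\mathsf{Win}_1^k$, the one-step update $p:=p\lor\Box\Diamond p$ yields
\[
V_{k+1}(p)=\mathsf{Win}_1^k\cup\bigl\{s\in W:\forall t\bigl(Rst\Rightarrow\exists u(Rtu\land u\in\mathsf{Win}_1^k)\bigr)\bigr\},
\]
so the remaining task is to identify the right-hand side with $\mathsf{Win}_1^{k+1}$. Read one quantifier at a time, $\Box\Diamond p$ at $s$ says: whatever successor $t$ player $0$ moves to, player $1$ can answer with a move $t\to u$ into a state from which she already has a strategy to win in at most $k$ further rounds. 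For the $\supseteq$-direction, splicing such a response with the inductive winning strategy at $u$ produces a strategy that wins from $s$ in one additional round. For the $\subseteq$-direction, any witnessing strategy for $s\in\mathsf{Win}_1^{k+1}\setminus\mathsf{Win}_1^k$ must prescribe, for every move $s\to t$ by player $0$, a response $t\to u$ with $u\in\mathsf{Win}_1^k$; otherwise player $0$ could play to a successor $t$ that has no such $u$ and exhaust player $1$'s round budget. The edge configurations are absorbed automatically: if $s$ itself is a dead end, then $s\in\mathsf{Win}_1^0\subseteq\mathsf{Win}_1^k$ and $\Box\Diamond p$ is vacuously satisfied at $s$; if $s$ has a dead-end successor $t$, then $\Box\Diamond p$ fails at $s$ while correspondingly $s\notin\mathsf{Win}_1$, since player $0$ can simply play $s\to t$.

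The $*$-statement is then a direct corollary: by the semantic clause for $\sub{(p:=\psi)^*}$, we have $\mmodel,s\vDash\sub{p:=\Box\bot;(p:=p\lor\Box\Diamond p)^*}p$ iff $s\in V_n(p)$ for some $n\in\mathbb{N}$, which by the previous paragraph is equivalent to $s\in\bigcup_{n\in\mathbb{N}}\mathsf{Win}_1^n=\mathsf{Win}_1$. I expect the main obstacle to be the bookkeeping in the inductive step: pinning down the precise meaning of a ``round'' (one player-$0$ move followed by one player-$1$ move) and verifying that the various boundary configurations involving dead ends are handled uniformly on both sides of the key identity.
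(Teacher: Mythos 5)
Your proposal is correct and follows essentially the same route as the paper: induct on $k$ to show that the truth set of $p$ after the $k$-th stage of the iteration is exactly $\mathsf{Win}_1^k$, using the identity $\mathsf{Win}_1^{k+1}=\mathsf{Win}_1^k\cup\{s: \text{every successor of } s \text{ has a successor in } \mathsf{Win}_1^k\}$, and then take the union over $k$ for the $*$-version. The only difference is that you spell out the game-theoretic verification of that identity (strategy splicing and the converse), which the paper asserts with a brief appeal to backward induction.
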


\begin{proof} 
For convenience, let $R[s]=\{t:Rst\}$ denote the successors of $s\in W$. Define the function $[R]:\mathcal{P}(W)\rightarrow\mathcal{P}(W)$ as $[R]:X\mapsto \{s: R[s]\subseteq X\}$, and its dual function is $\sub{R}: X\mapsto \{s:R[s]\cap X\not=\emptyset\}$. Then, $\mathsf{Win}_1^0 = \{s: R[s] = \emptyset\}$. Using backward induction, one can see that 
\[
\mathsf{Win}_1^{k+1} = \bigcup_{i< k+1}\mathsf{Win}_1^i \cup [R](\sub{R}(\mathsf{Win}_1^k)),
\]
which implies that if the truth set of $p$ in the $k$-th stage  of the iterative substitution is $\mathsf{Win}_1^k$, then the truth set of $p$ in the $(k+1)$-th stage is $\mathsf{Win}_1^{k+1}$. Therefore, by induction, the truth set of $p$ in the $k$-th stage of the iterative substitution is exactly $\mathsf{Win}_1^k$. As a direct consequence, the truth set of $\sub{p:=\Box\bot; (p:= p\lor \Box\Diamond p)^*}p$ in $\mmodel$ is $\bigcup_{k}\mathsf{Win}_1^k = \mathsf{Win}_1$. 
\end{proof}

It is also instructive to point out that  on a finite board, $\mathsf{Win_1}$ can also be expressed by the formula 
\[
\mu p. (\Box\bot\lor\Box\Diamond p)
\]
of {\em the modal $\mu$-calculus} $\mu\ML$ \cite{10.1007/3-540-12896-4_370,Yde-mu-calculus}. We would like to argue that $\MISL$ has the capacity to analyze the nuances of winning positions' ranks in greater detail, though this discussion will be reserved for the following section.
%is capable of reasoning about more details of ranks of winning positions, but we defer the discussion to the next section. 

\vspace{2mm}

{\bf Digression}\; Finally, let us briefly note that such the interplay between $\MISL$ and game-theoretic reasoning is widespread. Beyond the example provided, another instance is $\exists n\in \mathbb{N}\; (\Box\Diamond)^n p$, which is proposed and discussed in the context of the hide and seek game logic \cite{LHS-journal}.  However, the formula is not well-defined due to its infinite length in the cited work, but now it can be defined easily in $\MISL$ as $\sub{(p:= \Box\Diamond p)^*}p$. It is crucial for characterizing the existence of winning strategies in the games analyzed in \cite{LHS-journal}. For an in-depth exploration of  how the substitution operators capture the concept of $\exists n\in \mathbb{N}\; (\Box\Diamond)^n p$, we refer to \cite{Johan-fixedpoint2021}, and for more interactions between games and logic, we refer to, e.g., \cite{lig,graph-game-book}.

\section{On  relations between \texorpdfstring{$\MISL$}{} and other relevant logics}\label{sec:relation-with-other-logics}

In \Cref{sec:expressing-games}, we have shown that $\MISL$ can be used to reason about the winning positions for players, and in particular, given a finite game board, the formula $\sub{p:=\Box\bot; (p:= p\lor \Box\Diamond p)^*}p$
captures the winning positions $\mathsf{Win}_1$ of player $1$. However, as mentioned, this can also be defined with  $\mu\ML$. So, what is the advantage of proposing another logic, viz. $\MISL$? This question motivates us to explore the connections between $\MISL$ and other logical systems involving iterative computation, including $\mu\ML$, the {\em infinitary modal logic   $\ML^\infty$} \cite{infinity-modal-logic-axiomatization} extending the standard modal logic with countable conjunctions and disjunctions, and {\em the propositional dynamic logic} $\PDL$ \cite{pdl-bood}.\footnote{Moreover, the investigation into relation between $\MISL$ and {\em iterative modal relativization $\IMR$} \cite{IMR} is suggested in~\cite{Johan-fixedpoint2021}. In the next section, we will also provide results concerning their connection.}

%logic with many other logics not listed here, including the {\em iterative modal relativization $\IMR$} \cite{IMR}, the logic with {\em oscillation operators} defined in \cite{Benthem2015OscillationsLA} and {\em the modal logic  with inflation fixed-point operators $\MIC$} \cite{MIC}.}  

 \subsection{Relation between \texorpdfstring{$\MISL$}{} and   \texorpdfstring{$\mu\ML$}{}}

We begin with the connections between $\MISL$ and $\mu\ML$. First of all, let us note the following:
\begin{remark}\label{remark:mu-calculus}
 Since $\MISL$ only allows finite steps of iteration, for simplicity, in what follows we only consider the fragment of $\mu\ML$ where the fixed point of every formula can be reached by a finite sequence of approximation, although in general $\mu\ML$ admits fixed point reached by ordinal sequences of approximation as well.  
\end{remark}

As mentioned, $\MISL$ is capable of reasoning about the ranks of winning positions (cf. \Cref{def:winning_position_rank}). For instance, consider the formula
\[
\sub{p:=\Box\bot; (p:= p\lor \Box\Diamond p)^*}(\Diamond p\land \Diamond(\neg p\land \Box\Diamond p))
\]
saying that the current state has different successors that are player $1$'s winning positions, and their ranks differ by one. In \Cref{fig:game_example}, node $d$ is of rank $0$ and node $c$ is of rank $1$. 
%One can see that  the formula is true at state $a$, but false at other nodes. 
This  property  cannot be defined by $\mu\ML$: 

\begin{theorem}\label{thm:game_not_in_mu}
 The formula $\sub{p:=\Box\bot; (p:= p\lor \Box\Diamond p)^*}(\Diamond p\land\Diamond(\neg p\land \Box\Diamond p))$ is not definable by $\mu\ML$. 
\end{theorem}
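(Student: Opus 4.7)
My plan is to first decode the $\MISL$-formula semantically, then exhibit a concrete failure of the obvious $\mu\ML$-translation, and finally argue non-definability via a separating family of finite game boards.

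Semantic decoding: since $\sub{(p:=\psi)^*}$ existentially quantifies over the number of iterations, and by the backward-induction analysis of \Cref{sec:expressing-games} the truth set of $p$ after $n$ iterations of $(p:=p\lor\Box\Diamond p)$ (starting from $p:=\Box\bot$) equals $\mathsf{Win}_1^n$, the formula holds at $w$ iff there exist some $n\in\mathbb{N}$ and two successors $s_1,s_2$ of $w$ with $s_1\in\mathsf{Win}_1^n$ and $s_2\in\mathsf{Win}_1^{n+1}\setminus\mathsf{Win}_1^n$. Equivalently, $w$ has two successors in $\mathsf{Win}_1$ whose backward-induction ranks differ. Critically, both $\Diamond p$ and $\neg p$ inside the substitution refer to $\mathsf{Win}_1^n$ for the \emph{same} witness $n$, not to the union $\mathsf{Win}_1$.

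The natural $\mu\ML$-translation uses $\chi_W:=\mu p.(\Box\bot\lor\Box\Diamond p)$ to denote $\mathsf{Win}_1$ and reads $\Diamond\chi_W\land\Diamond(\neg\chi_W\land\Box\Diamond\chi_W)$. But $\chi_W$ captures $\mathsf{Win}_1$ as a single set and collapses all rank information. A quick witness to its failure: in a model where $w$ has two successors initiating dead-end-terminating chains of lengths $2$ and $4$, both successors lie in $\mathsf{Win}_1$ with ranks $1$ and $2$, so the $\MISL$-formula is true, yet the second conjunct of the candidate (asking for a successor outside $\mathsf{Win}_1$) is false. This motivates a more global argument that no $\mu\ML$-formula whatsoever can capture the rank-difference property.

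For the non-definability argument, I would suppose for contradiction that some $\varphi^*\in\mu\ML$ is equivalent and consider the family $\{(\mathcal{M}_n,w_n),(\mathcal{M}'_n,w_n)\}_{n\in\mathbb{N}}$ of finite game boards, where in $\mathcal{M}_n$ the state $w_n$ has two successors initiating chains both of length $2n$ (ranks $n$ and $n$, so the $\MISL$-formula is false) and in $\mathcal{M}'_n$ the chains have lengths $2n$ and $2n+2$ (ranks $n$ and $n+1$, so the $\MISL$-formula is true). The goal is to show that for $n$ sufficiently large relative to $\varphi^*$, the latter cannot separate $(\mathcal{M}_n,w_n)$ from $(\mathcal{M}'_n,w_n)$, contradicting equivalence. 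Using the parity-game semantics of $\mu\ML$, $\varphi^*$ corresponds to a parity automaton of bounded size and alternation index; on sufficiently long chain-fragments its winning strategies can be coupled across the two models via a back-and-forth synchronization that identifies adjacent rank levels.

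The main obstacle is this final coupling step. Each individual pair is of course distinguishable by \emph{some} $\mu\ML$-formula, since the models are finite. The content of the theorem is that no \emph{single} fixed-complexity $\mu\ML$-formula discriminates the entire family, which amounts to comparing two unboundedly large backward-induction ranks — a capability that exceeds the fixed-alternation power of the $\mu$-calculus. A rigorous proof demands a careful game-theoretic synchronization argument that exposes the gap between the $\omega$-level iteration built into $\sub{(p:=\psi)^*}$ and the way $\mu\ML$ reaches its least fixed points uniformly through a single fixed-point operator.
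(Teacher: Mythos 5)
Your semantic decoding of the $\MISL$-formula is essentially right, and your choice of a separating family (a root with two dead-end-terminating chains of equal length versus lengths differing by $2$) is a sensible target. But the proof has a genuine gap precisely at the point that carries all the mathematical content: you never actually show that a single $\mu\ML$-formula $\varphi^*$ cannot distinguish $(\mathcal{M}_n,w_n)$ from $(\mathcal{M}'_n,w_n)$ for large $n$. The appeal to ``parity automata of bounded size'' plus a ``back-and-forth synchronization that identifies adjacent rank levels'' is not an argument: the two pointed models are \emph{not} bisimilar, so bisimulation invariance gives nothing, and any honest version of your coupling must be a quantitative pumping-style argument on automaton states along the long chains (or an equivalent reduction), which you explicitly acknowledge as an ``obstacle'' rather than carry out. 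As written, the proposal also flirts with the wrong quantifier structure (``each pair is distinguishable by some $\mu\ML$-formula, but no single formula works for all $n$'') without supplying the uniformity argument that turns this into a contradiction with the assumed equivalence. So the proof is incomplete.

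For comparison, the paper closes exactly this gap by a reduction to formal-language theory rather than a direct automaton-coupling: assuming $\varphi^*\in\mu\ML$ exists, it is translated into an $\MSO$-formula $\alpha(x)$; then models of the relevant two-chain shape are encoded as words over $\{a,b,c\}$, and an $\MSO$-sentence is built (existentially quantifying the accessibility relation $R$) which would define the language $\{a^{2m}cb^{2n}: m=n+1 \text{ or } n=m+1\}$. Since that language is not regular, this contradicts B\"uchi's theorem that $\MSO$ over words defines exactly the regular languages. If you want to salvage your route, you would need to prove the pumping/coupling lemma for parity (tree) automata on long chains yourself; the paper's detour through $\MSO$ and regularity lets a classical theorem do that work for you.
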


\begin{proof}
 For simplicity, we write $\varphi$ for   $\sub{p:=\Box\bot; (p:= p\lor \Box\Diamond p)^*}(\Diamond p\land\Diamond(\neg p\land \Box\Diamond p))$. We  try to arrive at a contradiction by assuming that  there exists a formula $\varphi_{\mu}$ of $\mu\ML$ that is equivalent to $\varphi$. Since every formula of $\mu\ML$ can be translated into a formula of {\em monadic second order logic} $\MSO$ (see e.g., \cite{CompletenessMuMso}),  our assumption shows that there is an $\MSO$-formula $\alpha(x)$ such that $\mmodel,s\vDash_{\MISL} \varphi$ iff $\mmodel, [x\mapsto s]\vDash_{\MSO} \alpha(x)$ for any pointed model $(\mmodel,s)$.\footnote{By $\mmodel, [x\mapsto s]\vDash_{\MSO} \alpha(x)$ we mean that when we assign the value $s$ to the variable $x$, $\alpha(x)$ is true in $\mmodel$.} 
    
     We will make use of the following structure. Set alphabet $\Sigma = \{a,b,c\}$. Any word $w\in\Sigma^*$ can be viewed as a model $\mmodel_w = (\{1,2,\dots, |w|\}, P_a,P_b,P_c,<)$ such that:
     \begin{itemize}
         \item $<$ the usual ‘less than’ relation on natural numbers,
         \item $P_a,P_b$ and $P_c$ are unary predicates such that a natural number $i$ of the domain only satisfies $P_{w_i}$ where $w_i$ is the $i$-th symbol of $w$.
     \end{itemize}
     Also, for any $i,j\in\mathbb{N}$, we will use $i\le j$ for $i<j\lor i=j$. {\em A formula $\psi$ of $\MSO$ is said to define a language $\mathbb{L}$} when it is the case that  $w\in\mathbb{L}$ iff $\mmodel_w\vDash_{\MSO}\psi$. From  B\"uchi's theorem \cite{regular-language}, we know that $\MSO$-formulas define exactly the {\em regular languages}. Now, consider the following set of $\MSO$-formulas:
\begin{align*}
    \psi_1&:=\exists x(P_c x\land \forall y(P_c y\rightarrow y=x))\\
    \psi_2&:=\exists x(P_cx\land \forall y(P_ay\rightarrow y<x)\land \forall z(P_bz\rightarrow x<z)\\
 \psi_3&:=\forall x\forall y(Rxy\leftrightarrow (P_ay\land y<x\land \forall z(z<x\rightarrow z\le y))\lor (P_by\land x<y\land \forall z(z<y\rightarrow z\le x)))\\
 \psi_4&:=\forall x (P_cx\rightarrow \alpha(x))
\end{align*}
    Let $\psi:=\psi_1\land\psi_2\land\exists R(\psi_3\land\psi_4)$. It follows that if $\mmodel_w\vDash_{\MSO} \psi_1\land\psi_2$, then $w\in\{a^*cb^*\}$. When $\mmodel_w\vDash_{\MSO} \exists R(\psi_3\land \psi_4)$, $R$ satisfying $\psi_3$ should form a two-branch tree rooted at the state satisfying $P_c$:  one branch consisting of predecessor relation in $P_a$, and the other branch consisting of successor relation in $P_b$. From $\psi_4$ one can see that $\psi$ defines the non-regular language $\{a^{2m}cb^{2n}: m=n+1$ or $n=m+1\}$, which should not be defined by $\MSO$-formulas. 
\Cref{fig:game_nonregular_language} provides such a model $\mmodel_w$.

\begin{figure}[htbp]
    \centering
\begin{tikzpicture}
\node(2m+1)[circle,draw,inner sep=0pt,minimum size=1mm,fill=black] at (0,0) [label=above:$2m+1$][label=below:$P_c$]{};

\node(2m)  [circle,draw,inner sep=0pt,minimum size=1mm,fill=black] at (-1.5,0) [label=above:$2m$][label=below:$P_a$]{};

\node(2m-1)   [circle,draw,inner sep=0pt,minimum size=1mm,fill=black] at (-3,0) [label=above:$2m-1$][label=below:$P_a$]{};

\node(2m-2)   at (-4.5,0){$\dots$};

\node(2)   [circle,draw,inner sep=0pt,minimum size=1mm,fill=black] at (-5,0) [label=above:$2$][label=below:$P_a$]{};

\node(1)   [circle,draw,inner sep=0pt,minimum size=1mm,fill=black] at (-6.5,0) [label=above:$1$][label=below:$P_a$]{};

\node(2m+2)  [circle,draw,inner sep=0pt,minimum size=1mm,fill=black] at (1.5,0) [label=above:$2m+2$][label=below:$P_b$]{};

\node(2m+3)   [circle,draw,inner sep=0pt,minimum size=1mm,fill=black] at (3,0) [label=above:$2m+3$][label=below:$P_b$]{};

\node(2m+4)   at (4.5,0){$\dots$};

\node(4m+2)   [circle,draw,inner sep=0pt,minimum size=1mm,fill=black] at (5,0) [label=above:$4m+2$][label=below:$P_b$]{};

\node(4m+3)   [circle,draw,inner sep=0pt,minimum size=1mm,fill=black] at (6.5,0) [label=above:$4m+3$][label=below:$P_b$]{};

\draw[->](2m+1) to node [below] {$R$} (2m);
\draw[->](2m) to node [below] {$R$} (2m-1);
\draw[->](2m-1) to node [below] {$R$} (2m-2);
\draw[->](2) to node [below] {$R$} (1);

\draw[->](2m+1) to node [below] {$R$} (2m+2);

\draw[->](2m+2) to node [below] {$R$} (2m+3);

\draw[->](2m+3) to node [below] {$R$} (2m+4);

\draw[->](4m+2) to node [below] {$R$} (4m+3);

\draw [decorate,decoration={brace,amplitude=10pt},xshift=-4pt,yshift=0pt] (-1.5,-.6) -- (-6.5,-.6) node [black,midway,yshift=-.5cm] {\footnotesize $2m$};

\draw [decorate,decoration={brace,amplitude=10pt},xshift=-4pt,yshift=0pt] (6.5,-.6)  -- (1.5,-.6) node [black,midway,yshift=-.5cm] {\footnotesize $2m+2$};
\end{tikzpicture}
    \caption{A model $\mmodel_w$ such that $\mmodel_w\vDash_{\MSO} \psi_1\land \psi_2\land\exists R(\psi_3\land \psi_4)$, where $w=a^{2m}cb^{2m+2}$}
\label{fig:game_nonregular_language}
\end{figure}

    Therefore the original $\MISL$-formula $\varphi$ is not equivalent to any $\mu\ML$-formula. 
\end{proof}

On the other hand, recall that to calculate the least fixed-point of a monotone function in $\mu\ML$, one starts with $\bot$ and computes the approximation sequence. As mentioned in~\cite{Johan-fixedpoint2021}, if the least fixed point of $\varphi$ can be reached by a finite sequence of approximation, then the  $\mu\ML$-formula $\mu p.\varphi$ is equivalent to
\begin{center}
 $\sub{p:=\bot;(p:=\varphi)^*}p $.\footnote{But it is important to emphasize that we have this equivalence with the convention that in $\mu\ML$ only  finite sequences of approximation are allowed (Remark \ref{remark:mu-calculus}). And, we do {\em not} have the equivalence when ordinal sequences are allowed in $\mu\ML$: for instance, $\mu p.\Box p$ is true $(\mmodel,s)$ depicted in \Cref{fig:infinite_branching} while $\sub{p:=\bot;(p:=\Box p)^*}p$ is false there.}   
\end{center}
Now, combining this and \Cref{thm:game_not_in_mu}, we have the following as a corollary:

\begin{corollary}\label{coro:mu-contained-in-MISL}
  When only finite sequences of approximation are allowed, $\mu\ML$ is strictly contained in $\MISL$.  
\end{corollary}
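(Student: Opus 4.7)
The plan is to establish the two directions separately: the containment $\mu\ML \subseteq \MISL$ (under the finite-approximation convention of Remark 5.1) and the strictness, which is exactly what Theorem 5.1 already provides. For the containment, I would define a translation $\tau : \mathcal{L}_{\mu\ML} \to \mathcal{L}_{\MISL}$ by recursion on formula structure. The Boolean and modal clauses are the identity on themselves (so $\tau(p)=p$, $\tau(\neg\varphi)=\neg\tau(\varphi)$, $\tau(\varphi\land\psi)=\tau(\varphi)\land\tau(\psi)$, $\tau(\Diamond\varphi)=\Diamond\tau(\varphi)$), while the fixed-point clause uses the equivalence recorded just before the corollary: $\tau(\mu p.\varphi):=\sub{p:=\bot;(p:=\tau(\varphi))^*}p$, and dually $\tau(\nu p.\varphi):=[p:=\top;(p:=\tau(\varphi))^*]p$ when greatest fixed points are allowed (again, assuming finite convergence). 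The soundness of $\tau$ then follows by induction: the Boolean and modal cases are immediate, and the $\mu$-case is exactly the equivalence already stated in the text.

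The only delicate point in the induction step for $\mu p.\varphi$ is handling variable capture: one must ensure that the pivot $p$ in $\sub{p:=\bot;(p:=\tau(\varphi))^*}p$ does not clash with bound variables of other fixed-point subformulas occurring in $\varphi$. This is precisely the role of cleanness in $\MISL$, so I would invoke Proposition 4.2 (the renaming rule \ref{eqn:MISL_bound_variable_renaming}) to rename bound variables freely and thereby reduce to the clean case. Under the finite-approximation convention, the semantics of $\mu p.\varphi$ coincides with the stabilization of the iterates $\varphi[\bot/p], \varphi[\varphi[\bot/p]/p], \dots$, which by Theorem 4.4 matches the stage-by-stage unfolding produced by $\sub{p:=\bot;(p:=\tau(\varphi))^*}p$. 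This gives $\mu\ML \subseteq \MISL$ in expressive power.

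For strict containment, I would simply invoke Theorem 5.1: the $\MISL$-formula $\sub{p:=\Box\bot;(p:=p\lor\Box\Diamond p)^*}(\Diamond p\land\Diamond(\neg p\land\Box\Diamond p))$ has no equivalent in $\mu\ML$. Combined with the containment proved above, this yields strict inclusion and completes the argument.

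The main obstacle I expect is not in the strictness (which is handed to us by Theorem 5.1) but in carefully justifying the translation of nested fixed points: one has to verify that the inductive hypothesis applied to $\varphi$ still gives a valid $\MISL$-formula in the scope of the outer substitution operator $\sub{p:=\bot;(p:=\cdot)^*}p$, and that free occurrences of $p$ within $\tau(\varphi)$ correctly play the role of the variable being iterated. Making the cleanness of the produced formula explicit (via Proposition 4.2 and Remark 4.2) is what keeps the induction honest. Once this is in place, the corollary is immediate.
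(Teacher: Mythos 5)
Your proposal is correct and follows essentially the same route as the paper: the containment comes from the equivalence $\mu p.\varphi \leftrightarrow \sub{p:=\bot;(p:=\varphi)^*}p$ under the finite-approximation convention (which the paper simply cites from prior work rather than spelling out as an inductive translation), and strictness is obtained by invoking Theorem~\ref{thm:game_not_in_mu}, exactly as the paper does. Your additional care about renaming and cleanness is a harmless elaboration of the same argument.
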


 For the general case without the restriction imposed in the corollary above, it remains to be determined whether or not $\mu\ML$ is still contained in $\MISL$. 

\subsection{Relation between \texorpdfstring{$\MISL$}{} and \texorpdfstring{$\ML^\infty$}{}} 

Let us  now proceed to discuss $\ML^\infty$. As stated in \cite{open-minds},  $\mu\ML$ under the restriction imposed in Remark \ref{remark:mu-calculus} can be embedded into $\ML^\infty$. Now, we provide an analogous result stating  that  $\MISL$ is also a fragment of $\ML^\infty$:

\begin{theorem}\label{thm:frag_ML_infty}
  %  $\MISL$ can be translated to $\ML^\infty$. Formally, 
    There is a translation $T:\mathcal{L}_\MISL\rightarrow \mathcal{L}_{\ML^\infty}$ such that for any formula $\varphi\in \mathcal{L}_\MISL$, it holds that:
    \[ \mmodel,s\vDash_{\MISL}\varphi\quad\text{ iff }\quad \mmodel,s\vDash_{\ML^\infty}T(\varphi).
    \]
\end{theorem}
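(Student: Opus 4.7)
The plan is to eliminate all substitution operators in favour of finitary connectives and countable disjunctions, using two tools already at hand: Theorem \ref{thm:sub_equal_replacement_MISL}, which converts a simple substitution into an ordinary replacement, and the semantic clause for $\sub{(p:=\psi_1)^*}$, which expresses an iterative substitution as an existential quantification over $n \in \mathbb{N}$ and hence, in $\mathcal{L}_{\ML^\infty}$, as a countable disjunction. It is enough to define $T$ on clean normal formulas, since every $\varphi \in \mathcal{L}_{\MISL}$ is equivalent to such a formula via \eqref{eqn:MISL_normalization}, \eqref{eqn:MISL_bound_variable_renaming} and Proposition \ref{prop:MISL_clean_equivalence}, with cleanness inherited by the subformulas that the recursion visits (\Cref{remark:clean-MISL}).

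On clean normal formulas I would define $T$ by recursion along the well-founded order $\ll$ of Definition \ref{def:order_MISL} (well-foundedness is Lemma \ref{lem:wellfounded_ordering_MISL}):
\begin{align*}
T(p) &:= p, \\
T(\neg \chi) &:= \neg T(\chi), \\
T(\chi_1 \land \chi_2) &:= T(\chi_1) \land T(\chi_2), \\
T(\Diamond \chi) &:= \Diamond T(\chi), \\
T(\sub{p:=\psi}\chi) &:= T(\chi[\psi/p]) \quad \text{(main connective of $\chi$ not iterative)}, \\
T(\sub{p:=\psi_0;(p:=\psi_1)^*}\chi) &:= \bigvee_{n \in \mathbb{N}} T\bigl(\sub{p:=\psi_0;(p:=\psi_1)^n}\chi\bigr).
\end{align*}
Each right-hand argument is strictly $\ll$-smaller than its left-hand side by the last two clauses of Definition \ref{def:order_MISL}, so the recursion is legitimate; and because every step branches at most countably, $T(\varphi)$ is a genuine formula of $\mathcal{L}_{\ML^\infty}$.

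For correctness I would prove $\mmodel,s \vDash_{\MISL} \chi \iff \mmodel,s \vDash_{\ML^\infty} T(\chi)$ by transfinite induction along $\ll$. The atomic, Boolean and modal cases are immediate from the induction hypothesis. The simple-substitution case uses Theorem \ref{thm:sub_equal_replacement_MISL} to reduce $\sub{p:=\psi}\chi$ to the semantically equivalent $\chi[\psi/p]$, which is handled by the induction hypothesis. The iterative case is exactly the semantic clause for $\sub{(p:=\psi_1)^*}$: truth at $(\mmodel,s)$ holds iff some finite $n \in \mathbb{N}$ makes $\sub{p:=\psi_0;(p:=\psi_1)^n}\chi$ true, which is precisely the right-hand side countable disjunction.

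The main obstacle is maintaining the cleanness invariant throughout the recursion, since Theorem \ref{thm:sub_equal_replacement_MISL} presupposes that both $\sub{p:=\psi}\chi$ and $\chi$ are clean. This is exactly why the preprocessing step is needed and why the order $\ll$ was set up so that every descent preserves cleanness; the routine verification that replacements $\chi[\psi/p]$ and finite approximants $\sub{p:=\psi_0;(p:=\psi_1)^n}\chi$ remain clean normal formulas (as observed just after Definition \ref{def:order_MISL}) is the bookkeeping that makes the whole plan go through.
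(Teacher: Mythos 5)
Your proposal is correct and follows essentially the same route as the paper: first reduce to clean (normal) formulas via normalization and renaming, then translate recursively, turning simple substitutions into replacements and iterative substitutions into countable disjunctions over the finite approximants $\sub{p:=\psi_0;(p:=\psi_1)^n}\chi$. The only cosmetic difference is that the paper performs the replacement in the target language, setting $T_0(\sub{p:=\psi}\chi):=T_0(\chi)[T_0(\psi)/p]$, whereas you perform it at the $\MISL$ level as $T(\chi[\psi/p])$ and justify it by Theorem \ref{thm:sub_equal_replacement_MISL} together with recursion along the well-founded order $\ll$ — which in fact makes explicit the well-definedness argument the paper leaves to the reader.
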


\begin{proof}
We are going to show explicitly the construction of  $T$. Let $T = T_0\circ T_1$, where $T_0:\mathcal{L}_{\mathsf{clean}}\rightarrow \mathcal{L}_{\ML^\infty}$ is a translation from  clean $\MISL$-formulas to $\ML^\infty$-formulas and $T_1:\mathcal{L}_\MISL\rightarrow \mathcal{L}_{\mathsf{ clean}}$ is a translation from arbitrary $\MISL$-formulas to the clean ones. The translation $T_1$ can be obtained by bound variable renaming (\ref{eqn:MISL_bound_variable_renaming}), and $T_0$ keeps  atoms $p$  the same,  permutes with Boolean connectives and $\Diamond$, and  
\begin{align*}
 T_0(\sub{p:=\psi}\varphi)&:=T_0(\varphi)[T_0(\psi)/p]\\
 T_0(\sub{p:=\psi_0;(p:=\psi_1)^*}\varphi)&:=\textstyle \bigvee_{n\in \mathbb{N}}T_0(\sub{p:=\psi_0;(p:=\psi_1)^n}\varphi)
\end{align*}
where formula $\varphi$ in the clause for $T_0(\sub{p:=\psi}\varphi)$ does not have its  main connective being an iterative substitution operator.\footnote{Also, note that the replacement $[T_0(\psi)/p]$ of the variable $p$ is carried out in an $\ML^\infty$-formula, which substitutes all occurrences of propositional letter $p$ with $T_0(\psi)$. } It can be verified that the translation $T_0$ and thus $T$ translate formulas to their equivalent forms. Therefore $\MISL$ can be translated to $\ML^\infty$. 
\end{proof}

W.r.t. the relation between $\MISL$ and $\ML^\infty$, it remains to determine the following:

\vspace{1.5mm}

\noindent{\bf Open problem}\; Is $\MISL$ strictly contained in $\ML^\infty$? Are there some classes of models over which they are equivalent?

\subsection{Relation between \texorpdfstring{$\MISL$}{} and \texorpdfstring{$\PDL$}{}} 

 Finally, let us turn to another logical framework, $\PDL$, involving finite steps of iteration.  Since the language $\mathcal{L}_\PDL$ may contain more than one primitive programs (e.g., $\Delta = \{a,b,\dots\}$), we also add the corresponding modalities $\sub{a},\sub{b},\dots$ to the language of $\MISL$, and write $\mathcal{L}_{\MISL(\Delta)}$ and $\MISL(\Delta)$ for the resulting language and logic respectively. We now show that $\PDL$ can be embedded into $\MISL(\Delta)$:  

\begin{theorem}
There exists a translation $T:\mathcal{L}_\PDL\rightarrow \mathcal{L}_{\MISL(\Delta)}$ 
%where $\Delta=\{\sub{a},\sub{b},\cdots\}$ with $\{a,b,\cdots\}$ being the set of all primitive programs in the $\mathcal{L}_\PDL$, 
such that for any $\PDL$-formula $\varphi$, it holds that
\begin{center}
$\mmodel,s\vDash_\PDL\varphi$\quad iff \quad $\mmodel,s\vDash_{\MISL(\Delta)}  T(\varphi)$.
\end{center}
where $\mmodel$ is a model containing relations $R_{c}$ with $c\in\Delta$.
\end{theorem}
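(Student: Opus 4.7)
The plan is to define $T$ by simultaneous structural recursion on PDL-formulas and PDL-programs, together with an auxiliary translation $M_\pi(\theta)$ that, for every PDL-program $\pi$ and $\MISL(\Delta)$-formula $\theta$, produces a $\MISL(\Delta)$-formula meant to capture $\sub{\pi}\theta$. On formulas $T$ is a homomorphism through atoms and Booleans, and I set $T(\sub{\pi}\varphi):=M_\pi(T(\varphi))$. On programs I define
\begin{align*}
M_a(\theta) &:= \sub{a}\theta \quad\text{for } a\in\Delta,\\
M_{\pi_1;\pi_2}(\theta) &:= M_{\pi_1}(M_{\pi_2}(\theta)),\\
M_{\pi_1\cup\pi_2}(\theta) &:= M_{\pi_1}(\theta) \lor M_{\pi_2}(\theta),\\
M_{\psi?}(\theta) &:= T(\psi)\land\theta,\\
M_{\pi^*}(\theta) &:= \sub{p:=\theta;\,(p:=\theta\lor M_\pi(p))^*}p,
\end{align*}
where in the last clause $p$ is a fresh propositional variable, i.e.\ one not occurring in $\theta$ or in any test subformula of $\pi$. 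The mutual recursion is well-founded because every clause strictly reduces either the PDL-formula or the PDL-program being translated.

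Correctness is established by a simultaneous induction, proving two statements in parallel: \emph{(i)} $\mmodel,s\vDash_\PDL\varphi$ iff $\mmodel,s\vDash_{\MISL(\Delta)}T(\varphi)$; and \emph{(ii)} for every $\MISL(\Delta)$-formula $\theta$, $\mmodel,s\vDash_{\MISL(\Delta)}M_\pi(\theta)$ iff there exists $t$ with $(s,t)\in R_\pi^\mmodel$ and $\mmodel,t\vDash_{\MISL(\Delta)}\theta$. The base cases and the clauses for composition, choice, and test follow by direct unfoldings of the PDL semantics together with the semantic clauses for the corresponding $\MISL(\Delta)$-connectives.

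The main obstacle is the clause for $\pi^*$. Here I trace the valuation sequence induced by the nested substitution: write $V_n(p)$ for the interpretation of $p$ after the initialization $p:=\theta$ followed by $n$ applications of $p:=\theta\lor M_\pi(p)$, so that $V_0(p)=\llrr{\theta}^\mmodel$. Freshness of $p$ is crucial: since $p$ occurs neither in $\theta$ nor in any test subformula of $\pi$, the set $\llrr{\theta}^\mmodel$ and the relation $R_\pi^\mmodel$ are invariant under the successive updates of $p$. The induction hypothesis on $\pi$ therefore applies at every stage and yields
\[
V_{n+1}(p) \;=\; V_n(p)\,\cup\,\{s : \exists t.\,(s,t)\in R_\pi^\mmodel\text{ and } t\in V_n(p)\}.
\]
A routine induction on $n$ identifies $V_n(p)$ with the $n$-th stage of the standard approximation of the least fixed point $\mu X.\,\llrr{\theta}\cup\{s:\exists t.\,(s,t)\in R_\pi,\,t\in X\}$, whose union over $n\in\mathbb{N}$ is exactly $\llrr{\sub{\pi^*}\theta}^\mmodel$. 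Since the $\MISL$ iterative operator existentially quantifies over finite stages, $\mmodel,s\vDash_{\MISL(\Delta)}M_{\pi^*}(\theta)$ iff $s\in\bigcup_n V_n(p)$, which matches the PDL semantics of $\sub{\pi^*}\theta$. A minor bookkeeping task is to keep the fresh variables disjoint across nested applications of the iteration clause; this is harmless as the variable supply is countably infinite.
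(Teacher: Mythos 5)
Your proposal is correct and takes essentially the same route as the paper: $T$ is a homomorphism on atoms, Booleans and atomic programs, and $\pi^*$ is handled by a fresh-variable iterative substitution whose stages approximate reachability along $R_\pi$ (the paper uses $\sub{q:=T(\varphi);(q:=T(\sub{\pi}q))^*}q$; your cumulative variant with the extra disjunct $\theta$ in the iteration step is an immaterial difference, since the iterative operator quantifies existentially over stages). You additionally write out the correctness induction, which the paper explicitly leaves to the reader, and you cover test programs, which the paper's clause list omits; both additions are sound.
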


\begin{proof} 
A desired translation $T$ keeps propositional variables  the same,  permutes with Boolean connectives and the modalities $\sub{a}$ for primitive programs, and
\begin{align*}
 %   T(p)&:=p\\
 %    T(\neg\varphi)&:=\neg T(\varphi)\\   T(\varphi\land\psi)&:=T(\varphi)\land  T(\psi)\\   T(\sub{a}\varphi)&:=\sub{a}T(\varphi)\\ 
 T(\sub{\pi_1;\pi_2}\varphi)&:=T(\sub{\pi_1}\sub{\pi_2}\varphi)\\
    T(\sub{\pi_1\cup \pi_2}\varphi)&:=T(\sub{\pi_1}\varphi) \lor T(\sub{\pi_2}\varphi)\\
T(\sub{\pi^*}\varphi)&:= \sub{q:=T(\varphi);(q:=T(\sub{\pi}q))^*}q
\end{align*}
 where $\pi_1;\pi_2$, $\pi_1\cup \pi_2$, and $\pi^*$ are complex programs, and  $q$  is a fresh variable. We leave the proof for the correctness of the translation to the reader. 
\end{proof}

Having seen a number of expressive languages that can be encoded within $\MISL$,\footnote{Moreover, the undecidable framework  iterative modal relativization $\IMR$ \cite{IMR} can also be encoded within $\MISL$ (see \Cref{def:IMR_to_MISL}). } we guess that the complexity of the framework should be high. This is confirmed by the findings presented in the next section, which studies various properties of $\MISL$. %from different perspectives.

\section{Expressiveness and undecidability of \texorpdfstring{$\MISL$}{}}\label{sec:properties-misl}

In this section, we explore further properties of $\MISL$, including its expressive power and the decidability of its satisfiability problem. We will show that  even though $\MISL$ is much more powerful than $\ML$, truth of $\MISL$-formulas is still invariant under the standard notion of bisimulation for $\ML$ (\Cref{sec:bisimulation}); meanwhile,  the logic will be proved to be highly undecidable: its satisfiability problem is $\Sigma_1^1$-complete (\Cref{sec:undecidable}). Moreover, when we restrict our attention to the very simple class of finite tree models, the logic is still shown to be undecidable (\Cref{sec:undecidable-tree}).

%Our results that will be proved in this part may look contradictory in some sense: on the one hand, we will show that  even though $\MISL$ is much more powerful than $\ML$, truth of $\MISL$-formulas is still invariant under the standard notion of bisimulation for $\ML$ (\Cref{sec:bisimulation}); meanwhile,  the logic will be proved to be highly undecidable: its satisfiability problem is $\Sigma_1^1$-complete (\Cref{sec:undecidable}) and even when we restrict our attention to the very simple class of finite tree models, it is still undecidable (\Cref{sec:undecidable-tree}). 

\subsection{Bisimulation for \texorpdfstring{$\MISL$}{}} \label{sec:bisimulation}

Let us start by considering the expressive power of $\MISL$. Specifically, we will first introduce the details for the notion of bisimulation for $\ML$, and then show that although $\MISL$ is equipped with additional substitution operators, it is still invariant under the  established notion of bisimulation.

\begin{definition}[Bisimulation \cite{blackburn2004modal}]\label{def:bisimulation}
Let $\mmodel=(W,R,V)$ and $\mmodel'=(W',R',V')$  be two models and let $s \in W$ and  $s' \in W'$. A non-empty relation $Z\subseteq W\times W'$ is called a {\em bisimulation  between $\mmodel$ and $\mmodel'$}, if the following are satisfied:
\begin{itemize}[align=left,itemindent=-2em]
\item[\bf{Atom:}] If $sZs'$, then $(\mmodel, s)$  and $(\mmodel', s')$ satisfy the same propositional letters.
\item[\bf{Zig:}]  If $sZs'$ and there exists $t\in W$ such that $Rst$, then there exists $t'\in W'$ such that $R's't'$ and $tZt'$.
\item[\bf{Zag:}] If $sZs'$ and there exists $t'\in W'$ such that $R's't'$, then there exists $t\in W$ such that $Rst$ and $tZt'$. 
\end{itemize}
\end{definition}

When there is a bisimulation $Z$ linking states $s\in \mmodel$ and $s'\in \mmodel'$, we say that $(\mmodel,s)$ and $(\mmodel',s')$ are {\em bisimilar}, and write  $(\mmodel,s) \bis (\mmodel',s')$, or just $s \bis s'$ when the models are clear from the context. %We can show that $\MISL$ is invariant under  bisimulation. 

\begin{theorem}[Invariance under bisimulation]\label{thm:bis_inv}
   % Let $\mmodel=(W,R,V)$ and $\mmodel'=(W',R',V')$ be two  models, $s\in W$ and $s'\in W'$.  
   If $(\mmodel,s) \bis (\mmodel', s')$, then for any $\varphi\in\mathcal{L}_{\MISL}$,
    \[
    \mmodel,s\vDash_{\MISL} \varphi\; \text{ iff }\; \mmodel',s'\vDash_{\MISL} \varphi.
    \]
\end{theorem}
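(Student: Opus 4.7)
The plan is to prove a strengthened statement by structural induction on $\varphi \in \mathcal{L}_{\MISL}$, namely: for every pair of models $(\mathcal{N}, \mathcal{N}')$, every bisimulation $Z$ between them, and every pair of states $u \in \mathcal{N}$, $u' \in \mathcal{N}'$ with $uZu'$, one has $\mathcal{N}, u \vDash_{\MISL} \varphi$ iff $\mathcal{N}', u' \vDash_{\MISL} \varphi$. The universal quantification over model pairs is crucial, since invoking the induction hypothesis for a subformula $\psi$ inside a substitution requires applying it to the \emph{transformed} models, not to the original $\mmodel, \mmodel'$. The base case and the cases for $\neg$, $\land$, and $\Diamond$ are verbatim the standard invariance argument for $\ML$.

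The core of the argument is a key observation I would isolate as a lemma: if $Z$ is a bisimulation between $\mathcal{N}$ and $\mathcal{N}'$, and the formula $\psi$ is already known to be bisimulation-invariant (as guaranteed by the outer induction hypothesis), then $Z$ is also a bisimulation between $\mathcal{N}|_{p:=\psi}$ and $\mathcal{N}'|_{p:=\psi}$. The Zig and Zag conditions carry over because substitutions leave the accessibility relation untouched; the Atom condition for any propositional letter $q \neq p$ is preserved because its valuation is unchanged; and the Atom condition for $p$ itself holds precisely by bisimulation invariance of $\psi$: for $tZt'$, membership of $t$ in $V|_{p:=\psi}(p)$ is equivalent to $\mathcal{N}, t \vDash_{\MISL} \psi$, which by the IH for $\psi$ is equivalent to $\mathcal{N}', t' \vDash_{\MISL} \psi$, i.e., to $t' \in V'|_{p:=\psi}(p)$.

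Given this lemma, the case $\varphi = \sub{p:=\psi}\chi$ is immediate: applying the lemma (using IH for $\psi$) gives a bisimulation $Z$ between $\mmodel|_{p:=\psi}$ and $\mmodel'|_{p:=\psi}$, and then IH for $\chi$ transports satisfaction of $\chi$ between the two related states. For the case $\varphi = \sub{(p:=\psi)^*}\chi$, I would run a secondary induction on $n \in \mathbb{N}$ to show that $Z$ remains a bisimulation between $\mmodel|_{(p:=\psi)^n}$ and $\mmodel'|_{(p:=\psi)^n}$ for every $n$: the base case $n=0$ is trivial, and the inductive step applies the key lemma to the stage-$n$ model pair, using the inner IH (that $Z$ bisimulates at stage $n$) together with the outer IH for $\psi$. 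Once bisimilarity at every stage is established, applying IH for $\chi$ at each stage yields that $\mmodel|_{(p:=\psi)^n}, s \vDash_{\MISL} \chi$ iff $\mmodel'|_{(p:=\psi)^n}, s' \vDash_{\MISL} \chi$, and existentially quantifying over $n$ on both sides gives the desired equivalence.

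The main obstacle is getting the quantifier structure of the induction right: the IH for a subformula $\psi$ must be phrased uniformly over all bisimilar pairs of models, because I need to instantiate it at the iteratively transformed pairs $(\mmodel|_{(p:=\psi)^n}, \mmodel'|_{(p:=\psi)^n})$, which differ from the original $(\mmodel, \mmodel')$. Conceptually, the argument goes through because substitution modifies only the valuation of the pivot, and the new valuation is defined through a bisimulation-invariant formula $\psi$; all other data of the Kripke structures, in particular the accessibility relation on which Zig and Zag depend, are left intact.
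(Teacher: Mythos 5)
Your proposal is correct and follows essentially the same route as the paper: a structural induction on $\varphi$ whose key step is that a bisimulation $Z$ remains a bisimulation between the substituted models $\mmodel|_{p:=\psi}$ and $\mmodel'|_{p:=\psi}$ (since only the pivot's valuation changes, and that change is governed by the bisimulation-invariant $\psi$), with an inner induction on $n$ for the iterative case. Your explicit remark that the induction hypothesis must be quantified over all bisimilar model pairs is a point the paper leaves implicit, but it is the same argument.
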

\begin{proof}
We prove by induction on formulas $\varphi$. When $\varphi$ is an $\ML$-formula, it is easy to see that the equivalence holds.  We now consider other cases. In what follows, we write $Z$ for a bisimulation that links $s$ and $s'$.

  \vspace{1.5mm}

 (1). Formula $\varphi$ is $\sub{p:=\psi}\chi$. Assume that $(t,t')\in Z$. We are going to show that $(\mmodel|_{p:=\psi},t)\bis (\mmodel'|_{p:=\psi},t')$.  Models $\mmodel|_{p:=\psi}$ and $\mmodel$ may only differ on  the truth set of $p$, and similarly for $\mmodel'|_{p:=\psi}$ and $\mmodel'$. So, we only need to prove that  $\mmodel|_{p:=\psi},t$ and $\mmodel'|_{p:=\psi},t'$ agree on the propositional variable $p$. By induction hypothesis,  $\mmodel,t\vDash_{\MISL} \psi$ iff $\mmodel',t'\vDash_{\MISL} \psi$. Hence, $\mmodel|_{p:=\psi},t\vDash_{\MISL} p$ iff $\mmodel'|_{p:=\psi},t'\vDash_{\MISL} p$. Therefore,   $(\mmodel|_{p:=\psi},s)\bis (\mmodel'|_{p:=\psi},s')$. Again, by induction hypothesis, $\mmodel|_{p:=\psi},s\vDash_{\MISL} \chi$ iff $\mmodel'|_{p:=\psi},s'\vDash_{\MISL} \chi$. Thus, based on the semantics,  $\mmodel,s\vDash_{\MISL} \sub{p:=\psi}\chi$ iff $\mmodel',s'\vDash_{\MISL} \sub{p:=\psi}\chi$.
    
\vspace{1.5mm}

(2). Formula $\varphi$ is $\sub{(p:=\psi)^*}\chi$. Assume that $(t,t')\in Z$. We show by induction that for all $n\in\mathbb{N}$, it holds that $(\mmodel|_{(p:=\psi)^n},t)\bis (\mmodel'|_{(p:=\psi)^n},t')$. The base case $n=0$ is direct. Also, for the case that $n=1$,  we have known from the reasoning in case (1) that $(\mmodel|_{p:=\psi},t)\bis (\mmodel'|_{p:=\psi},t')$. For any $n\ge 2$, by induction hypothesis, we have $(\mmodel|_{(p:=\psi)^n},t)\bis (\mmodel'|_{(p:=\psi)^n},t')$. Now, using the reasoning similar to that in case (1), we can show that $(\mmodel|_{(p:=\psi)^{n+1}},t)\bis (\mmodel'|_{(p:=\psi)^{n+1}},t')$.

 Now assume that $\mmodel,s\vDash_{\MISL}\sub{(p:=\psi)^*}\chi$. Then, $\mmodel,s\vDash_{\MISL}\sub{(p:=\psi)^m}\chi$ for some $m\in\mathbb{N}$, which indicates that $\mmodel|_{(p:=\psi)^m},s\vDash_{\MISL} \chi$. It follows from  the reasoning above that $(\mmodel|_{(p:=\psi)^{m}},s)\bis (\mmodel'|_{(p:=\psi)^{m}},s')$. By induction hypothesis,  $\mmodel|_{(p:=\psi)^m},s\vDash_{\MISL} \chi$ implies $\mmodel'|_{(p:=\psi)^m},s'\vDash_{\MISL} \chi$. So, $\mmodel',s'\vDash_{\MISL}\sub{(p:=\psi)^m}\chi$, which gives us $\mmodel',s'\vDash_{\MISL}\sub{(p:=\psi)^*}\chi$. The converse direction can be proved similarly. This completes the proof.
\end{proof}

\subsection{ Satisfiability problem of \texorpdfstring{$\MISL$}{}: \texorpdfstring{$\Sigma_1^1$}{}-complete}\label{sec:undecidable} 

In this part, we prove that the satisfiability problem for $\MISL$ is undecidable, by encoding the same for the {\em iterative modal relativization} $\IMR$, whose complexity is known to be $\Sigma_1^1$-complete \cite{IMR}. Moreover, as we shall see, the logic is  undecidable even when we consider certain simple classes of models, for example, finite tree models. As a warm-up, we first show the following:

\begin{proposition}\label{prop:non-fmp}
    $\MISL$ lacks the finite model property. 
\end{proposition}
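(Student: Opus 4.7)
The plan is to exhibit a specific formula $\varphi\in\mathcal{L}_\MISL$ that is satisfied at a pointed infinite model but refuted on every finite one. To this end I shall define
\[
\alpha := [p:=\top;(p:=\Diamond p)^*]\,p,\qquad \beta := \sub{p:=\top;(p:=\Diamond p)^*}\neg p,
\]
and take $\varphi := \alpha\wedge\Box\beta$. Unpacking the semantics of the iterative substitution operator, starting from $V(p)=W$ and iterating $p:=\Diamond p$ for $n$ steps yields the valuation $V_n=\{s: s\text{ has a walk of length }n\}$. Hence $\alpha$ holds at $s$ iff, for every $n\in\mathbb{N}$, $s$ has a walk of length $n$; dually, $\beta$ holds at $s$ iff the lengths of walks from $s$ are uniformly bounded.

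To prove finite-unsatisfiability I will suppose for contradiction that $\mmodel,s\vDash_\MISL\varphi$ with $\mmodel$ finite. From $\alpha$, $s$ admits walks of arbitrary length, so by finiteness of $W$ and the pigeonhole principle, some cycle is reachable from $s$. From $\Box\beta$, every successor of $s$ has uniformly bounded walk-lengths, hence no cycle is reachable from any successor of $s$. The only way to reconcile these is for the reachable cycle to pass through $s$ itself; but then some successor $t$ of $s$ lies on that cycle and thereby reaches $s$, so $t$ inherits $s$'s unbounded walks, contradicting $\beta$ at $t$.

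To witness satisfiability I will display a tree-shaped model with a root $s$ and, for each $i\ge 1$, a fresh disjoint chain $s\to t_i^1\to t_i^2\to\cdots\to t_i^i$ of dead-ending successors. Then $s$ admits a walk of every length through the $i$-th chain, so $\alpha$ holds at $s$, while each successor $t_i^1$ has all its walks of length at most $i-1$, so $\Box\beta$ holds at $s$ as well; the choice of valuation is irrelevant because $p$ occurs only bound in $\varphi$. The main conceptual obstacle — and the reason the extra $\Box$ is needed in $\varphi$ — is that under finite-approximation semantics $\MISL$ cannot directly express ``there is no infinite path from $s$'' but only the strictly stronger ``path lengths from $s$ are uniformly bounded''; taking the two conjuncts at $s$ directly would already be contradictory. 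Pushing the boundedness one $\Box$-step into the tree reconciles the two conjuncts precisely when $s$ has infinitely many successors, each carrying its own local bound, which is of course impossible in any finite model.
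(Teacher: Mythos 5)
Your proof is correct and takes essentially the same route as the paper: exhibit an ``infinity axiom'' that is satisfied at the root of the infinite broom model (a root with disjoint finite chains of every length) but forces any satisfying model to be infinite. The paper uses the single formula $[p:=\Box\bot;(p:=\neg p\wedge\Box p)^*]\Diamond p$, whose iteration stages track exact heights of states, whereas your $\alpha\wedge\Box\beta$ expresses unbounded walk lengths at the root versus bounded walk lengths at each successor, and your pigeonhole/cycle argument is just a more explicit version of the paper's finite-model refutation.
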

\begin{proof}
It suffices to construct an $\MISL$-formula that can only have infinite models. Consider the following:
\[
[p:=\Box\bot;(p:= \neg p \land \Box p)^*]\Diamond p.
\]
  
First, the formula is satisfiable. One can check that it is true at the state $s$ in model  $\mmodel$ depicted in \Cref{fig:infinite_branching}.

\vspace{1.5mm}

Next, given a model in which the formula is true, one can see that the truth set of $p$, obtained at the $n$-stage of the  iteration, consists of the states of height $n$, and the formula says that there is no finite upper bound on the height of the current state. Therefore, any model satisfying the formula must be infinite, as needed. \qedhere
\end{proof}

\begin{figure}[htbp]
    \centering
\begin{tikzpicture}
\node(1)[circle,draw,inner sep=0pt,minimum size=1mm,fill=black] at (0,0)  [label=left:$s$] {};
\node(2)[circle,draw,inner sep=0pt,minimum size=1mm,fill=black] at (1,.7) {};

\node(3)[circle,draw,inner sep=0pt,minimum size=1mm,fill=black] at (1,0) {};

\node(4)[circle,draw,inner sep=0pt,minimum size=1mm,fill=black] at (2,0) {};

\node(5)[circle,draw,inner sep=0pt,minimum size=1mm,fill=black] at (1,-.7) {};

\node(6)[circle,draw,inner sep=0pt,minimum size=1mm,fill=black] at (2,-1.4) {};

\node(7)[circle,draw,inner sep=0pt,minimum size=1mm,fill=black] at (3,-2.1) {};

\node(8) at (1,-1.4)  {$\cdots$};

\draw[->](1) to (2);

\draw[->](1) to (3);

\draw[->](3) to (4);
\draw[->](1) to (5);

\draw[->](5) to (6);

\draw[->](6) to (7);

\end{tikzpicture}
    \caption{An infinite model $\mmodel$ such that $\mmodel,s\vDash_{\MISL} [p:=\Box\bot;(p:= \neg p \land \Box p)^*]\Diamond p$}
    \label{fig:infinite_branching}
\end{figure} 

Next we figure out the exact complexity of the satisfiability problem for $\MISL$:

\begin{theorem}[Undecidability of $\MISL$]\label{thm:undecidability}
The satisfiability problem for $\MISL$ is undecidable and is $\Sigma_1^1$-complete. 
\end{theorem}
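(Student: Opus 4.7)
The plan is to establish membership in $\Sigma_1^1$ and $\Sigma_1^1$-hardness separately, with the hardness coming from a reduction from the $\Sigma_1^1$-complete satisfiability problem of $\IMR$ \cite{IMR}.

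For the upper bound, the goal is to express satisfiability of an $\MISL$-formula $\varphi$ as a $\Sigma_1^1$-sentence over the natural numbers. By \Cref{thm:bis_inv}, every satisfiable formula has a countable bisimilar model (via a standard L\"owenheim--Skolem-style submodel argument on the countable modal signature). So I would quantify existentially over a countable domain $W \subseteq \mathbb{N}$, an accessibility relation $R \subseteq W \times W$, a valuation $V$, and a point $s \in W$. The truth definition of $\MISL$ unfolds by formula induction; the only non-first-order ingredient is the iterative operator $\sub{(p:=\psi)^*}\chi$, whose semantics adds an arithmetic quantifier $\exists n \in \mathbb{N}$ together with a primitive-recursive definition of $V|_{(p:=\psi)^n}$ from the original data. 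The overall sentence has the form ``$\exists$ countable structure $+$ arithmetic predicate'', which is $\Sigma_1^1$.

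For the lower bound, I would use a translation $T : \mathcal{L}_{\IMR} \to \mathcal{L}_{\MISL}$ based on a fresh propositional letter $p$ that serves as a \emph{survival marker}, recording which states remain after all relativizations applied so far. Concretely, set
\begin{center}
$T(q) = q$, \quad $T(\neg\varphi) = \neg T(\varphi)$, \quad $T(\varphi_1 \land \varphi_2) = T(\varphi_1) \land T(\varphi_2)$,\vspace{1mm}\\
$T(\Diamond\varphi) = \Diamond\bigl(p \land T(\varphi)\bigr)$, \quad $T(\sub{\psi^*}\varphi) = \sub{(p := p \land T(\psi))^*}\, T(\varphi)$,
\end{center}
and send an $\IMR$-formula $\varphi$ to the $\MISL$-formula $\sub{p := \top}\, T(\varphi)$, which first initialises $p$ to be true everywhere. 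The claim is that $\varphi$ is $\IMR$-satisfiable iff $\sub{p := \top}\, T(\varphi)$ is $\MISL$-satisfiable. Since the translation is linear and computable, this yields $\Sigma_1^1$-hardness.

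The main obstacle lies in establishing correctness of the translation at iterative operators. By an outer induction on $\varphi$ and an inner induction on the iteration count $n$, one must verify that after $n$ rounds of the $\MISL$-substitution $p := p \land T(\psi)$, the truth set of $p$ in $\mmodel$ coincides exactly with the domain of the $\IMR$-model obtained from $\mmodel$ by $n$-fold relativization to $\psi$. The clause $T(\Diamond\varphi) = \Diamond(p \land T(\varphi))$ then guarantees that the $\MISL$-diamonds explore only surviving states, mirroring restricted accessibility on the $\IMR$ side; a subtlety here is that nested iterative operators relativise \emph{within the already restricted domain}, so the conjunction $p \land T(\psi)$ (rather than $T(\psi)$ alone) is essential to make the inductive step work. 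Combining the reduction with the $\Sigma_1^1$ upper bound yields $\Sigma_1^1$-completeness.
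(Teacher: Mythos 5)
Your overall plan (reduce from $\IMR$ for hardness, encode the truth definition by an existentially quantified relation over a countable domain for membership) matches the paper's, but the lower-bound translation has a genuine gap: it never accounts for whether the evaluation point itself survives the relativizations. In $\IMR$, $\sub{\psi^*}\varphi$ holds at $s$ only if for some $n$ the state $s$ still belongs to the $n$-fold relativized model and $\varphi$ holds there (dually, the box version is vacuously true at a state that has been deleted; cf.\ \Cref{lem:recursion_PAL}); your clause $T(\sub{\psi^*}\varphi)=\sub{(p:=p\land T(\psi))^*}T(\varphi)$ simply drops the requirement $s\in W_n$. Concretely, take $W=\{y,z\}$ with $yRz$ and $q$ false everywhere. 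The $\IMR$-formula $\neg q\land\Diamond\top\land\sub{q^*}\Box\bot$ is unsatisfiable: since $q$ fails at the evaluation point, the point is deleted at every stage $n\ge 1$, so the witness would have to be $n=0$, contradicting $\Diamond\top$. Yet its translation $\sub{p:=\top}\bigl(\neg q\land\Diamond(p\land\top)\land\sub{(p:=p\land q)^*}\neg\Diamond(p\land\top)\bigr)$ is true at $y$: after one substitution the truth set of $p$ is empty, so $\neg\Diamond p$ holds at $y$. Hence the reduction as stated is unsound (an unsatisfiable formula is mapped to a satisfiable one); the repair is a survival guard, e.g.\ $\sub{(p:=p\land T(\psi))^*}(p\land T(\varphi))$ for the diamond and $p\to T(\varphi)$ for the box, with atoms and booleans guarded likewise inside announcements --- which is essentially the relativization map $(\cdot)^q$ of \Cref{def:simple_relativization} that the paper's translation (\Cref{def:IMR_to_MISL}) builds in. A second problem is coverage: your $T$ has no clauses for the single relativization $[\psi]$ and, more importantly, for the common-knowledge operator $\Box^*$, both of which belong to $\mathcal{L}_{\IMR}$; the $\Sigma_1^1$-completeness of $\IMR$ cited from \cite{IMR} is for the language including $\Box^*$, so a reduction that omits it only establishes hardness for a fragment whose $\Sigma_1^1$-hardness you have not argued. (The paper translates it as $[q:=T(\varphi);(q:=\Box q)^*]q$.)

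On the upper bound, the shape is right but two steps need repair. First, \Cref{thm:bis_inv} does not yield countable models: a pointed model whose root has uncountably many pairwise non-bisimilar successors is not bisimilar to any countable model, so ``countable bisimilar submodel'' is not available. The paper instead obtains countable models from the embedding of $\MISL$ into $\ML^\infty$ (\Cref{thm:frag_ML_infty}) together with the downward L\"owenheim--Skolem property of the latter. Second, $V|_{(p:=\psi)^n}$ is not ``primitive-recursive from the original data'': computing it requires the satisfaction relation for $\psi$ in the intermediate (already substituted) models, which is exactly the object being defined. One therefore has to quantify existentially over a full evaluation relation on clean formulas and states, stipulate the clauses of \Cref{lemma:sat_relation} (using \Cref{thm:sub_equal_replacement_MISL} to reduce $\sub{p:=\psi}\chi$ to $\chi[\psi/p]$), and appeal to the well-founded order $\ll$ to see that these clauses pin the relation down; this is the content of the paper's \Cref{thm:Sigma-1-1} and is the part your sketch glosses over.
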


To prove \Cref{thm:undecidability}, we first show the satisfiability problem for $\MISL$ is undecidable and $\Sigma_1^1$-hard, by embedding iterative modal relativization $\IMR$ \cite{IMR} into $\MISL$. Here is a quick introduction to $\IMR$:

\vspace{2mm}

\noindent{\bf Basics of $\IMR$}\;  The language $\mathcal{L}_{\IMR}$ of $\IMR$ extends $\mathcal{L}_{\ML}$ with the public announcement operators $[\psi]$ and their finitely iterative generalizations $[\psi^*]$, and the common knowledge operator $\Box^*$ (i.e., the reflexive-transitive closure of $\Box$). %\yaxin{Explain that we use the semantics of deleting nodes instead of edges. }

\vspace{2mm}

\noindent To show that $\IMR$ can be translated into $\MISL$, let us first prove that the simplest relativization operator $[p]$ can be mimicked with $\MISL$. To facilitate discussion, let us define a function $(\cdot)^p$ on $\MISL$-formulas such that $(\varphi)^p$ intuitively captures the relativization $[p]\varphi$. Details are as follows: 

\begin{definition}[Intermediate relativization]\label{def:simple_relativization}
Let $p\in {\bf P}$ be a propositional variable. For any  $\varphi\in\mathcal{L}_{\MISL}$ such that $p\not\in bv(\varphi)$,   we define $(\varphi)^p$ recursively as follows: 
\begin{center}
    \begin{tabular}{rl}
   $(q)^p:= p\rightarrow q$ &  $(\neg\varphi)^p:= p\to \neg (\varphi)^p$ \vspace{1mm} \\  $(\varphi_1\land\varphi_2)^p:= (\varphi_1)^p\land (\varphi_2)^p$& $(\Box\varphi)^p:= p\rightarrow \Box(\varphi)^p$ \vspace{1mm} \\ 
   $(\sub{q:=\varphi_1}\varphi_2)^p:= \sub{q:=(\varphi_1)^p}(\varphi_2)^p$& $(\sub{(q:=\varphi_1)^*}\varphi_2)^p:= \sub{(q:=(\varphi_1)^p)^*}(\varphi_2)^p$
\end{tabular}
\end{center}
\end{definition}

In the definition above, the restriction $p\not\in bv(\varphi)$ is used to exclude the situation that $p$ is bounded by some substitution operators in $\varphi$. 
%The condition is harmless, since with the rule (\ref{eqn:MISL_bound_variable_renaming}) one can always apply bound variable renaming on $\varphi$. 
Let us now show the following, which intuitively states that  $(\varphi)^p$ is equivalent to $[p]\varphi$: 

\begin{lemma}\label{lem:recursion_PAL}
Let $\mmodel=(W,R,V)$ be a model, $s\in W$ and  $p\in{\bf P}$. For any $\varphi\in\mathcal{L}_\MISL$ such that $p\not\in bv(\varphi)$, it holds that
\begin{center}
  $ \mmodel, s\vDash_{\MISL} (\varphi)^p$ \quad iff \quad  $s\not\in W^p$ or $\mmodel^p, s\vDash_{\MISL} \varphi$,
\end{center}
where $\mmodel^p=(W^p, R^p, V^p)$ is the model obtained by relativizing $p$ in $\mmodel$, i.e.,
\begin{itemize}
    \item $W^p:= \{s\in W: \mmodel,s\vDash_{\MISL} p\}$, 
    \item $R^p:= R\cap (W^p\times W^p)$,
    and 
    \item  for any $q\in{\bf P}$, $V^p(q):= V(q)\cap W^p$. 
\end{itemize}
\end{lemma}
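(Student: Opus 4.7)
The plan is to prove the lemma by structural induction on $\varphi$, with the condition $p \notin bv(\varphi)$ being crucial for making the substitution cases behave well. For the base case $\varphi = q$, the formula $(q)^p = p \to q$ is vacuously satisfied when $s \notin W^p$, and when $s \in W^p$ it reduces to checking $q$ at $s$, which by definition of $V^p(q) = V(q) \cap W^p$ coincides with the truth of $q$ at $s$ in $\mmodel^p$. The Boolean cases $\neg$ and $\land$ follow by propositional reasoning together with the induction hypothesis, using the form $p \to \neg(\varphi)^p$ to secure vacuous truth off $W^p$. For the $\Box$ case, note that $R$-successors of an $s \in W^p$ that lie in $W^p$ are precisely the $R^p$-successors of $s$; the induction hypothesis then equates truth of $(\varphi)^p$ at successors outside $W^p$ (always true) and inside $W^p$ (matching $\mmodel^p$).

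The two substitution cases are the heart of the argument. The crucial auxiliary claim is the following model-transformation identity: for any $q \neq p$ and any $\psi$ with $p \notin bv(\psi)$,
\[
\bigl(\mmodel|_{q := (\psi)^p}\bigr)^p \;=\; (\mmodel^p)|_{q := \psi}.
\]
Since $q \neq p$ (because $p \notin bv(\sub{q:=\psi}\cdot)$), the substitution does not disturb $V(p)$, so both models share the underlying set $W^p$ and relation $R^p$, and agree on every propositional letter other than $q$. For $q$, the induction hypothesis applied to $\psi$ gives $\mmodel, t \vDash (\psi)^p$ iff $t \notin W^p$ or $\mmodel^p, t \vDash \psi$; intersecting with $W^p$ yields exactly the new valuation of $q$ in $(\mmodel^p)|_{q := \psi}$. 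With this identity in hand, the simple-substitution case reduces immediately to the induction hypothesis applied to $\varphi_2$.

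For the iterative case $(\sub{(q:=\varphi_1)^*}\varphi_2)^p = \sub{(q:=(\varphi_1)^p)^*}(\varphi_2)^p$, I would prove by an inner induction on $n \in \mathbb{N}$ that
\[
\bigl(\mmodel|_{(q:=(\varphi_1)^p)^n}\bigr)^p \;=\; (\mmodel^p)|_{(q:=\varphi_1)^n},
\]
with the base case trivial and the successor step being a single application of the auxiliary identity above (applied inside the inductively constructed models). Then the truth of $\sub{(q:=(\varphi_1)^p)^*}(\varphi_2)^p$ at $s$ outside $W^p$ is handled by showing, again by induction on $n$, that $(\varphi_2)^p$ is vacuously true at $s$ off $W^p$ in every stage, while for $s \in W^p$ one witness $n$ translates between the two sides by the outer induction hypothesis applied to $\varphi_2$.

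The main obstacle is keeping the bookkeeping consistent in the substitution cases: one must verify that $p$ is never the pivot of the substitution (ensured by $p \notin bv(\varphi)$, which also propagates to subformulas $\varphi_1$ and $\varphi_2$ and to $(\varphi_1)^p$ since the construction of $(\cdot)^p$ does not introduce $p$ into any pivot position), so that the relativization set $W^p$ truly remains invariant throughout the recursive model-transformation steps. Once the commutation identity between relativization and substitution is established, the proof is a routine unpacking of definitions guided by the inductive structure.
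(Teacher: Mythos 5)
Your proof is correct and takes essentially the same approach as the paper: a structural induction on $\varphi$ whose heart is the commutation identity $\bigl(\mmodel|_{q:=(\psi)^p}\bigr)^p = (\mmodel^p)|_{q:=\psi}$ (justified exactly as in the paper, by noting $q\neq p$ keeps $W^p$ and $R^p$ fixed and computing the valuation of $q$ via the induction hypothesis on $\psi$), followed by an inner induction on $n$ for the iterative case. The only difference is presentational: the paper states the stage-$n$ claim as a truth equivalence for $\sub{(q:=(\psi)^p)^n}(\chi)^p$, whereas you state it as a model identity, but the content is the same.
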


\begin{proof}
It is by induction on  $\varphi$. The cases for the Boolean connectives and $\Box$ are straightforward. We now consider the cases for substitution operators.

\vspace{1.5mm}

(1). Formula  $\varphi$ is $\sub{q:=\psi}\chi$. Then, by definition, $(\varphi)^p=\sub{q:=(\psi)^p}(\chi)^p$. Since $p\not\in bv(\varphi)$, $p$ is  different from $q$. We are going to show that $\mmodel^p|_{q:=\psi}$ is the same model as $(\mmodel|_{q:=(\psi)^p})^p$. Notice that if they can be different, then their difference can only be the truth sets of $q$.  By induction hypothesis, 
    \begin{align*}
        V^p|_{q:=\psi}(q) &= \{s\in W^p:\mmodel^p,s\vDash_{\MISL} \psi\} \\
        &= \{s\in W^p:\mmodel,s\vDash_{\MISL} (\psi)^p\} \\
        &= (V|_{q:=(\psi)^p})^p(q)
    \end{align*}
    Hence the two models are the same. Therefore, 
    \begin{align*}
        \mmodel,s\vDash_{\MISL} \sub{q:=(\psi)^p}\chi^p &\; 
        \Leftrightarrow \; s\not\in W^p\text{ or }(\mmodel|_{q:=(\psi)^p})^p,s\vDash_{\MISL}\chi\\
         &\; 
        \Leftrightarrow \; s\not\in W^p\text{ or }\mmodel^p|_{q:=\psi},s\vDash_{\MISL} \chi\\
        &\; 
        \Leftrightarrow \; s\not\in W^p\text{ or }\mmodel^p,s\vDash_{\MISL} \sub{q:=\psi}\chi
    \end{align*}

(2). Formula $\varphi$ is $\sub{(q:=\psi)^*}\chi$. Then, by definition, $(\varphi)^p=\sub{(q:=(\psi)^p)^*}(\chi)^p$. Since $p\not\in bv(\varphi)$, $p$ is not $q$. 
Let us first prove that for any $n\in\mathbb{N}$, it holds that
\begin{center}
$\mmodel,s\vDash_{\MISL} \sub{(q:=(\psi)^p)^n}\chi^p \; \Leftrightarrow  \; s\not\in W^p\text{ or }\mmodel^p,s\vDash_{\MISL} \sub{(q:=\psi)^n}\chi.$    
\end{center}
The case for $n=0$ holds immediately by induction hypothesis. Let us consider the case that $n=k+1$.
  \begin{align*}
     \mmodel,s\vDash_{\MISL} \sub{(q:=(\psi)^p)^{k+1}}(\chi)^p
       &\; 
        \Leftrightarrow \;  \mmodel,s\vDash_{\MISL} \sub{q:=(\psi)^p}\sub{(q:=(\psi)^p)^{k}}(\chi)^p\\
        &\; 
        \Leftrightarrow \;  \mmodel|_{q:=(\psi)^p},s\vDash_{\MISL}  \sub{(q:=(\psi)^p)^{k}}(\chi)^p\\
     &\; 
        \Leftrightarrow \;  s\not\in W^p\text{ or }(\mmodel|_{q:=(\psi)^p})^p,s\vDash_{\MISL} \sub{(q:=\psi)^k} \chi  \\
      &\; 
        \Leftrightarrow \; s\not\in W^p\text{ or } \mmodel^p|_{q:=\psi},s\vDash_{\MISL} \sub{(q:=\psi)^k} \chi  \\
      &\; 
        \Leftrightarrow \;  s\not\in W^p\text{ or }\mmodel^p,s\vDash_{\MISL} \sub{(q:=\psi)^{k+1}}\chi    
  \end{align*}
  The third equivalence holds by induction hypothesis. The fourth can be obtained by the same reasoning as that in  (1). 

  Now, assume that $\mmodel,s\vDash_{\MISL} (\sub{(q:=\psi)^*}\chi)^p$. Then, $\mmodel,s\vDash_{\MISL}\sub{(q:=(\psi)^p)^m}(\chi)^p$ for some $m\in\mathbb{N}$. By the reasoning above, $s\not\in W^p$ or $\mmodel^p,s\vDash_{\MISL} \sub{(q:=\psi)^m}\chi$. The latter implies that $\mmodel^p,s\vDash_{\MISL} \sub{(q:=\psi)^*}\chi$, as needed. The other direction can be proved in a similar way.
 This completes the proof.
\end{proof} 

Now we are ready to show the details of the translation:
\begin{definition}[Translation from $\IMR$ to $\MISL$]\label{def:IMR_to_MISL}
  The translation $T:\mathcal{L}_{\IMR}\rightarrow \mathcal{L}_\MISL$ is  given by the following:
\begin{center}
 $T(p):=p$ \quad  $T(\neg \varphi):=\neg T(\varphi)$\quad  $T(\varphi_1\land\varphi_2):=T(\varphi_1)\land T(\varphi_2)$ \vspace{1mm} \\
 $T(\Box\varphi):=\Box T(\varphi)$\quad 
        $T(\Box^*\varphi):=[q:=T(\varphi); (q:=\Box q)^*]q$ \vspace{1mm} \\
$T([\psi]\varphi):=\sub{q:=T(\psi)}(T(\varphi))^q$ \quad  $T(\sub{\psi^*}\varphi):=\sub{q:=\top; (q:=q\land (T(\psi))^q)^*}(T(\varphi))^q$
\end{center}
    where $q$ is a fresh variable.
\end{definition}

\begin{theorem}\label{thm:translation_IMR}
For any pointed model $(\mmodel,s)$ and any formula  $\varphi\in\mathcal{L}_{\IMR}$, we have the following:
\begin{center}
$\mmodel,s\vDash_\IMR \varphi$\quad iff\quad $\mmodel,s\vDash_{\MISL}  T(\varphi)$
\end{center}
where  $T$ is the translation given in  \Cref{def:IMR_to_MISL}.
\end{theorem}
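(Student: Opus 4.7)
The plan is to prove Theorem \ref{thm:translation_IMR} by structural induction on the $\IMR$-formula $\varphi$. The atomic, Boolean, and $\Box$ cases are routine since $T$ commutes with these connectives, so I would dispatch them briefly and concentrate on the three cases in which $T$ introduces fresh $\MISL$-machinery: $[\psi]\varphi$, $\Box^*\varphi$, and $\sub{\psi^*}\varphi$. The freshness of $q$ in each of these cases (ensuring $q \notin bv(T(\varphi)) \cup bv(T(\psi))$) is important, as it lets me invoke \Cref{lem:recursion_PAL}. I would also add a preparatory observation that the intermediate relativization $(\cdot)^q$ of \Cref{def:simple_relativization} commutes with $T$ in the sense that $(T(\varphi))^q$ is the $\MISL$-formula whose truth on $\mmodel$ under valuation giving $q$ the truth set $X$ matches the truth of $\varphi$ on the submodel $\mmodel^X$ (via $T$ and the inductive hypothesis applied pointwise).

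The case $[\psi]\varphi$ is the cleanest: $T([\psi]\varphi) = \sub{q:=T(\psi)}(T(\varphi))^q$. After performing the single-step substitution, $q$ has truth set $\llrr{T(\psi)}^{\mmodel} = \llrr{\psi}^{\mmodel}$ (by IH on $\psi$), and then \Cref{lem:recursion_PAL} gives that $(T(\varphi))^q$ holds at $s$ iff $s \notin \llrr{\psi}^{\mmodel}$ or $\mmodel^{\psi}, s \vDash_\MISL T(\varphi)$; an IH call on $\varphi$ identifies this with the $\IMR$-semantics of $[\psi]\varphi$. The case $\Box^*\varphi$ requires an auxiliary lemma proved by induction on $n \in \mathbb{N}$: after initializing $q := T(\varphi)$ and then iterating $q := \Box q$ exactly $n$ times, the truth set of $q$ equals $\{t : \text{every } R^n\text{-successor of } t \text{ satisfies } T(\varphi)\}$. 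Since $[q:=T(\varphi);(q:=\Box q)^*]q$ holds at $s$ iff $s$ is in this set for every $n$, and the $\IMR$-semantics of $\Box^*\varphi$ asks the same, the cases match.

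The main obstacle is the iterative relativization case $\sub{\psi^*}\varphi$, where $T(\sub{\psi^*}\varphi) = \sub{q:=\top;(q:=q\land (T(\psi))^q)^*}(T(\varphi))^q$. Here I would prove, by induction on $n$, the following key invariant: after the initial assignment $q:=\top$ followed by $n$ iterations of $q:=q\land (T(\psi))^q$, the truth set of $q$ in the transformed model equals the domain $W_n$ of the $n$-fold $\psi$-relativized submodel $\mmodel^{[\psi,n]}$ of $\mmodel$. The base case $n=0$ is immediate since $q=\top$ yields $W$ and no relativization has occurred. For the inductive step, the new truth set is $\{t : t \in W_n \text{ and } \mmodel[q:=W_n], t \vDash_\MISL (T(\psi))^q\}$, which by \Cref{lem:recursion_PAL} (applied to the current valuation of $q$) reduces to $\{t \in W_n : \mmodel^{[\psi,n]}, t \vDash_\MISL T(\psi)\}$, and an IH call on $\psi$ identifies this with $W_{n+1}$. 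Some care is needed to justify applying \Cref{lem:recursion_PAL} mid-iteration: because $q$ is chosen fresh with respect to $T(\psi)$, changes to the truth set of $q$ do not affect the truth of $T(\psi)$ in the sense required by the lemma, and the submodel $\mmodel[q:=W_n]^q$ is precisely $\mmodel^{[\psi,n]}$ up to the interpretation of $q$, which $T(\psi)$ cannot see.

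Once this invariant is established, the remaining argument parallels the $[\psi]\varphi$ case: the overall formula holds at $s$ iff there exists $n$ with $\mmodel[q:=W_n], s \vDash_\MISL (T(\varphi))^q$, which by \Cref{lem:recursion_PAL} and the IH on $\varphi$ is equivalent to the existence of an $n$ such that either $s \notin W_n$ (in which case $\varphi$ is vacuously satisfied in the relativized model in the $\IMR$ convention) or $\mmodel^{[\psi,n]}, s \vDash_\IMR \varphi$. This coincides with the $\IMR$-semantics of $\sub{\psi^*}\varphi$, completing the induction. The subtle point throughout is consistently handling the ``$s \notin W^p$'' disjunct from \Cref{lem:recursion_PAL}, which represents the partiality of relativization; I would devote a short remark to verifying that this disjunct aligns with the intended $\IMR$-semantics of announcement at a state that is eliminated.
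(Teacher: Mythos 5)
Your proposal is correct and follows essentially the same route as the paper: structural induction on the $\IMR$-formula, with \Cref{lem:recursion_PAL} handling the relativization $(\cdot)^q$ via a fresh $q$, and an inner induction on $n$ showing that the $n$-fold iterated substitution produces a model agreeing (up to the valuation of $q$) with the $n$-fold relativized model, whose domain is the truth set of $q$. The paper's proof makes the same key moves, including the careful treatment of the ``$s\notin W^p$'' disjunct and of the fact that $q$ does not occur in the translated subformulas, so no further changes are needed.
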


\begin{proof}
It is by  induction on $\varphi\in\mathcal{L}_{\IMR}$.  The cases for Boolean formulas and $\Box\varphi$  are easy to prove. We consider others.

 \vspace{1.5mm}

  (1). Formula $\varphi$ is $\Box^*\psi$. By definition, $T(\varphi) = [q:=T(\psi);(q:=\Box q)^*]q$. Then, we have the following sequence: 
  \begin{align*}
      \mmodel,s\vDash_\IMR \varphi& \;\Leftrightarrow\;  \text{~for all~} n\in\mathbb{N},\, \mmodel,s\vDash_\IMR \Box^n\psi\\      
      &\;\Leftrightarrow\;   \text{~for all~} n\in\mathbb{N}, \, \mmodel,s\vDash_{\MISL} \Box^n T(\psi)\\
      &\;\Leftrightarrow\;  \text{~for all~} n\in\mathbb{N}, \, \mmodel,s\vDash_{\MISL}  [q:=T(\psi)]\Box^n q\\ 
      &\;\Leftrightarrow\;  \text{~for all~} n\in\mathbb{N}, \, \mmodel,s\vDash_{\MISL}  [q:=T(\psi);(q:=\Box q)^n]q\\ 
      &\;\Leftrightarrow\;   \mmodel,s\vDash_{\MISL} [q:=T(\psi);(q:=\Box q)^*]q\\
      &\;\Leftrightarrow\;   \mmodel,s\vDash_{\MISL}  T(\varphi)
  \end{align*}
  The second equivalence holds by induction hypothesis.

  \vspace{1.5mm}

  (2).  Formula $\varphi$ is $[\psi]\chi$. Take a fresh variable $q$, and $T(\varphi) = \sub{q:=T(\psi)}T(\chi)^q$. We write $\mmodel^\psi = (W^\psi, R^\psi, V^\psi)$ for the model obtained by relativizing $\psi\in \mathcal{L}_{\IMR}$.  By  induction hypothesis, $\mmodel,s\vDash_\IMR \psi$ iff $\mmodel,s\vDash_{\MISL} T(\psi)$. So,   $W^\psi$ is the same as the domain of the model $(\mmodel|_{q:=T(\psi)})^q$ that is obtained by relativizing $q$ in $\mmodel|_{q:=T(\psi)}$. Models  $\mmodel^\psi$ and $\left(\mmodel|_{q:=T(\psi)}\right)^q$ may only differ on the valuation of $q$. Since $q$ does not appear in $\chi$, it holds that:
  \[
  \mmodel^\psi,s\vDash_\IMR \chi \;\Leftrightarrow\; \left(\mmodel|_{q:=T(\psi)}\right)^q,s\vDash_\IMR \chi.
  \]
Then, the following sequence of equivalences holds:
  \begin{align*}
      \mmodel,s\vDash_\IMR \varphi& \;\Leftrightarrow\;  \mmodel,s\not\vDash_\IMR \psi \text{ or }\mmodel^\psi,s\vDash_\IMR \chi\\
      & \;\Leftrightarrow\; 
      \mmodel,s\not\vDash_\IMR \psi
      \text{ or }      \left(\mmodel|_{q:=T(\psi)}\right)^q,s\vDash_\IMR \chi\\
      & \;\Leftrightarrow\; \mmodel|_{q:=T(\psi)},s\not\vDash_{\MISL}  q
      \text{ or }      \left(\mmodel|_{q:=T(\psi)}\right)^q,s\vDash_{\MISL}   T(\chi)\\
      & \;\Leftrightarrow\; \mmodel|_{q:=T(\psi)},s\vDash_{\MISL}   (T(\chi))^q \\
      & \;\Leftrightarrow\; \mmodel,s\vDash_{\MISL}  \sub{q:=T(\psi)}(T(\chi))^q\\
      & \;\Leftrightarrow\; \mmodel,s\vDash_{\MISL}  T(\varphi)
  \end{align*}
  The third equivalence holds by induction hypothesis, and the fourth one is obtained by \Cref{lem:recursion_PAL}.

\vspace{1.5mm}
    
   (3).  Formula $\varphi$ is $\sub{\psi^*}\chi$.   Pick a fresh variable $q$, and $T(\varphi)=\sub{q:=\top;(q:=q\land (T(\psi))^q)^*}(T(\chi))^q$. For any $n\in\mathbb{N}$, we write $\mmodel^{\psi^n}$ for the model obtained by repeatedly relativizing $\psi\in \mathcal{L}_{\IMR}$ for $n$ times. By induction hypothesis, for any  $(\mmodel,s)$ and $\varphi_0\in\{\psi,\chi\}$, we have the following: 
   \begin{center}
     $\mmodel,s\vDash_\IMR  \varphi_0 \;\Leftrightarrow\; \mmodel,s\vDash_{\MISL}  T(\varphi_0)$. \qquad ($\ddagger$)
   \end{center}
   
   Now, let us first show by induction on $n\in\mathbb{N}$ that for any $n\in \mathbb{N}$, $\mmodel^{\psi^n}$ and $\left(\mmodel|_{q:=\top;(q:=q\land (T(\psi))^q)^n}\right)^q$ may only disagree on the valuation of $q$.  

   \vspace{1mm}
   
   For the base case that  $n=0$, $\mmodel^{\psi^n} = \mmodel$ and $\left(\mmodel|_{q:=\top;(q:=q\land (T(\psi))^q)^n}\right)^q = \mmodel|_{q:=\top}$. Therefore the two models can only disagree on the valuation of $q$. 

      \vspace{1mm}

   For induction step, suppose that all the  cases for $n-1$ are proved, where $n\ge 1$. Then $\left(\mmodel|_{q:=\top;(q:=q\land (T(\psi))^q)^{n-1}}\right)^q$ and $\mmodel^{\psi^{n-1}}$ can only disagree on the valuation of $q$. Then, for the domain of $\mmodel^{\psi^n} = \left(\mmodel^{\psi^{n-1}}\right)^\psi$, we have the following:  
   \begin{align*}
     %  \{s\in \mmodel^{\psi^n}\} 
        &\{s\in \mmodel^{\psi^{n-1}}: \mmodel^{\psi^{n-1}},s\vDash_\IMR \psi\}\\
       =& \{s\in \left(\mmodel|_{q:=\top;(q:=q\land (T(\psi))^q)^{n-1}}\right)^q: \left(\mmodel|_{q:=\top;(q:=q\land (T(\psi))^q)^{n-1}}\right)^q,s\vDash_\IMR \psi\}\\
       =& \{s\in \left(\mmodel|_{q:=\top;(q:=q\land (T(\psi))^q)^{n-1}}\right)^q: \left(\mmodel|_{q:=\top;(q:=q\land (T(\psi))^q)^{n-1}}\right)^q,s\vDash_{\MISL}  T(\psi)\} \\
       =& \{s\in \mmodel|_{q:=\top;(q:=q\land (T(\psi))^q)^{n-1}}: \mmodel|_{q:=\top;(q:=q\land (T(\psi))^q)^{n-1}},s\vDash_{\MISL}  q\textit{ and }\left(\mmodel|_{q:=\top;(q:=q\land (T(\psi))^q)^{n-1}}\right)^q,s\vDash_{\MISL}  T(\psi)\}\\
       =& \{s\in \mmodel|_{q:=\top;(q:=q\land (T(\psi))^q)^{n-1}}: \mmodel|_{q:=\top;(q:=q\land (T(\psi))^q)^{n-1}},s\vDash_{\MISL}  q\land (T(\psi))^q\}\\
       =& \{s\in \mmodel|_{q:=\top;(q:=q\land (T(\psi))^q)^{n-1}}: \mmodel|_{q:=\top;(q:=q\land (T(\psi))^q)^n},s\vDash_{\MISL}  q\}
   \end{align*}
The first equation holds by induction hypothesis and the fact that $q$ does not appear in $\psi$, while the %fourth 
second equation holds by $(\ddagger)$. Also, notice that the last set in the sequence above is exactly the domain  of $\left(\mmodel|_{q:=\top;(q:=q\land (T(\psi))^q)^n}\right)^q$. Now it is easy to see that  $\mmodel^{\psi^n}$ and $\left(\mmodel|_{q:=\top;(q:=q\land (T(\psi))^q)^n}\right)^q$ can only disagree on the valuation of $q$. Since $q$ does not appear in $\chi$, we have
    \[
    \mmodel^{\psi^n},s\vDash_\IMR \chi \;\Leftrightarrow\; \left(\mmodel|_{q:=\top;(q:=q\land (T(\psi))^q)^n}\right)^q,s\vDash_\IMR \chi.
    \]
    Therefore, the following sequence holds:
    \begin{align*}
        \mmodel,s\vDash_\IMR [\psi^n]\chi &\;\Leftrightarrow\;  s\not\in \mmodel^{\psi^n}\text{ or }\mmodel^{\psi^n},s\vDash_\IMR \chi\\
        &\;\Leftrightarrow\;  s\not\in\left(\mmodel|_{q:=\top;(q:=q\land (T(\psi))^q)^n}\right)^q\text{ or }\left(\mmodel|_{q:=\top;(q:=q\land (T(\psi))^q)^n}\right)^q,s\vDash_\IMR \chi\\
        &\;\Leftrightarrow\;  s\not\in\left(\mmodel|_{q:=\top;(q:=q\land (T(\psi))^q)^n}\right)^q\text{ or }\left(\mmodel|_{q:=\top;(q:=q\land (T(\psi))^q)^n}\right)^q,s\vDash_{\MISL}  T(\chi) \\
        &\;\Leftrightarrow\;  \mmodel|_{q:=\top;(q:=q\land (T(\psi))^q)^n},s\vDash_{\MISL}  (T(\chi))^q  \\
        &\;\Leftrightarrow\;  \mmodel,s\vDash_{\MISL}  \sub{q:=\top;(q:=q\land (T(\psi))^q)^n}(T(\chi))^q \\
        &\;\Leftrightarrow\; \mmodel,s\vDash_{\MISL}  T(\varphi)
    \end{align*}
    The third equivalence holds by ($\ddagger$), and the fourth one holds by \Cref{lem:recursion_PAL}. Now the proof is completed.
\end{proof}

So far, we have shown that the satisfiability problem for $\MISL$ is undecidable and $\Sigma_1^1$-hard. In what follows, we will show that the problem is in $\Sigma_1^1$, which concludes that $\MISL$ is $\Sigma_1^1$-complete. To do so, we define the {\em evaluation relation}  that intuitively expresses whether or not  a formula $\varphi$ is true at a pointed model: 

\begin{definition}\label{def:evaluation-relation}
Let $\mmodel = (W,R,V)$ be a model and $\mathcal{L}_{\mathsf{clean}}$ be the set of clean $\MISL$-formulas. We define a binary relation $E_\mmodel\subseteq \mathcal{L}_{\mathsf{ clean}}\times W$ such that: 
\begin{center}
  $E_\mmodel(\varphi,s)$\; iff\; $\mmodel,s\vDash_{\MISL}\varphi$.  
\end{center}
\end{definition}

Now, w.r.t. a model $\mmodel$ and its associated $E_\mmodel$, we can show the following: 

\begin{lemma}\label{lemma:sat_relation}
Given a model $\mmodel=(W,R,V)$, for all $s\in W$, it holds that $V(p) = \{s:E_\mmodel(p,s)\}$, and also, all the following properties hold for $E_\mmodel$:
\begin{align*}
 E_\mmodel(\bot,s)  &\leftrightarrow \bot\\
 E_\mmodel(\neg\varphi,s)  & \leftrightarrow \neg E_\mmodel(\varphi,s) \\
E_\mmodel(\varphi_1\land\varphi_2,s)  &\leftrightarrow E_\mmodel(\varphi_1,s)\land E_\mmodel(\varphi_2,s)\\
E_\mmodel(\Box\varphi,s) &\leftrightarrow \forall t\in W\,(Rst\rightarrow E_\mmodel(\varphi,t))\\
E_\mmodel(\sub{p:=\psi}\varphi,s) &\leftrightarrow E_\mmodel(\varphi[\psi/p],s)\\
E_\mmodel(\sub{p:=\psi_0;(p:=\psi_1)^*}\varphi,s) &\leftrightarrow  \exists n\in \mathbb{N}\,E_\mmodel(\sub{p:=\psi_0; (p:=\psi_1)^n}\varphi,s)
\end{align*}
where the main connective of the formula $\varphi$ in the clause  for $E_\mmodel(\sub{p:=\psi}\varphi,s)$ is not an iterative substitution operator.
\end{lemma}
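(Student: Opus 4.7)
The first claim, $V(p)=\{s:E_\mmodel(p,s)\}$, is essentially a restatement of the atomic clause of the $\MISL$ semantics through Definition \ref{def:evaluation-relation}, so it requires no real work. For the list of equivalences my plan is to unfold $E_\mmodel(-,s)$ on both sides using Definition \ref{def:evaluation-relation} and then match the resulting conditions against the clauses of the $\MISL$ truth definition.

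The cases for $\bot$, $\neg$, $\land$ and $\Box$ are direct: on each side the condition $E_\mmodel(\cdot,s)$ unfolds to the corresponding $\vDash_{\MISL}$-statement, and the equivalences then become the standard truth clauses. For the iterative case $\sub{p:=\psi_0;(p:=\psi_1)^*}\varphi$, after unfolding $E_\mmodel$ I would appeal to the very definition of the iterative operator, namely that $\mmodel,s\vDash_{\MISL}\sub{p:=\psi_0;(p:=\psi_1)^*}\varphi$ iff there exists $n\in\mathbb{N}$ with $\mmodel,s\vDash_{\MISL}\sub{p:=\psi_0;(p:=\psi_1)^n}\varphi$, which is exactly the right-hand side once rewritten via $E_\mmodel$.

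The only clause that needs extra care is $E_\mmodel(\sub{p:=\psi}\varphi,s)\leftrightarrow E_\mmodel(\varphi[\psi/p],s)$, since here the claimed equivalence is not simply a semantic clause but the non-trivial identity linking a substitution operator with a syntactic replacement. My plan is to invoke Theorem \ref{thm:sub_equal_replacement_MISL}: since $\varphi\in\mathcal{L}_{\mathsf{clean}}$ and its main connective is not an iterative substitution operator (by the hypothesis of this clause), the formula $\sub{p:=\psi}\varphi$ is a clean $\MISL$-formula in the sense of Remark \ref{remark:clean-MISL}, and the theorem yields $\sub{p:=\psi}\varphi\leftrightarrow\varphi[\psi/p]$. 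Unfolding $E_\mmodel$ on both sides then gives the desired equivalence.

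The main obstacle I anticipate is precisely this last step: one must be sure that the side conditions of Theorem \ref{thm:sub_equal_replacement_MISL} (cleanness of both $\sub{p:=\psi}\varphi$ and $\varphi$, with the bound/free separation preserved under subformulas) are honestly satisfied, and that $\varphi[\psi/p]$ remains in $\mathcal{L}_{\mathsf{clean}}$ so that $E_\mmodel$ is defined on it. This is handled by the cleanness conventions of Remark \ref{remark:clean-MISL} together with the observation, used already before Proposition \ref{prop:replacement_preseve_ordering_MSL}, that $fv(\varphi[\psi/p])\subseteq fv(\sub{p:=\psi}\varphi)$ and $bv(\varphi[\psi/p])\subseteq bv(\sub{p:=\psi}\varphi)$, so cleanness is preserved by the replacement operation. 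Once this bookkeeping is carried out, all remaining verifications are routine.
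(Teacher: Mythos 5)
Your proposal is correct and takes essentially the same route as the paper, which proves the lemma by checking each clause against the semantics (organized as an induction along the well-founded order $\ll$) and invokes Theorem \ref{thm:sub_equal_replacement_MISL} exactly for the clause $E_\mmodel(\sub{p:=\psi}\varphi,s)\leftrightarrow E_\mmodel(\varphi[\psi/p],s)$. Your cleanness bookkeeping, ensuring $\varphi[\psi/p]$ remains in $\mathcal{L}_{\mathsf{clean}}$ and that the hypotheses of that theorem are met via Remark \ref{remark:clean-MISL}, is precisely the detail the paper leaves implicit.
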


This lemma can be proved  by induction on formulas in $\mathcal{L}_{\mathsf{clean}}$ along $\ll$ (with the help of \Cref{thm:sub_equal_replacement_MISL}). Notice that a formula occurring on the right-hand-side of a condition is `less than' ($\ll$) the formula occurring on the left-hand-side of this condition.  Now we are ready to prove that $\MISL$ is in $\Sigma_1^1$.

\begin{theorem}\label{thm:Sigma-1-1}
The satisfiability problem for $\MISL$ is in $\Sigma_1^1$. 
\end{theorem}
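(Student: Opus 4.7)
The plan is to express satisfiability of a clean formula $\varphi$ (which we may assume by \Cref{prop:MISL_clean_equivalence}) as a second-order existential statement over a model together with its evaluation relation. Concretely, I aim to show that $\varphi$ is satisfiable if and only if there exist a countable set $W \subseteq \mathbb{N}$, a relation $R \subseteq W \times W$, a valuation $V: \mathbf{P} \to 2^W$, a relation $E \subseteq \mathcal{L}_{\mathsf{clean}} \times W$, and a state $s \in W$ such that every clause of \Cref{lemma:sat_relation} holds (reading $E$ in place of $E_\mmodel$) and $E(\varphi, s)$ is true.

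For the forward direction, I take any pointed model $(\mmodel_0, s_0)$ satisfying $\varphi$, pass to a countable pointed model $(\mmodel, s)$ that still satisfies $\varphi$ (this reduction is the main obstacle and is discussed below), and let $E := E_\mmodel$ of \Cref{def:evaluation-relation}, which by \Cref{lemma:sat_relation} meets the required conditions. For the backward direction I exploit the uniqueness of the evaluation relation: given any model $\mmodel = (W, R, V)$ together with a relation $E$ satisfying the closure conditions, I prove by well-founded induction along $\ll$ (well-founded by \Cref{lem:wellfounded_ordering_MISL}) that $E(\chi, t) \leftrightarrow E_\mmodel(\chi, t)$ for every clean $\chi$ and every $t \in W$. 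The Boolean and modal cases are immediate; the simple-substitution case is handled by \Cref{thm:sub_equal_replacement_MISL}, which guarantees $\mmodel, t \vDash_{\MISL} \sub{p:=\psi}\chi$ iff $\mmodel, t \vDash_{\MISL} \chi[\psi/p]$ and matches the corresponding clause on $E$; the iterative-substitution case follows from the semantics of $\sub{(p:=\psi_1)^*}$ together with the inductive hypothesis applied to each approximant $\sub{p:=\psi_0;(p:=\psi_1)^n}\varphi$, which lies $\ll$-below the original formula.

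Given this characterization, the complexity bound is routine: fix an effective enumeration of the countable set $\mathcal{L}_{\mathsf{clean}}$ by $\mathbb{N}$; then $W, R, V, E$ can all be encoded as subsets of $\mathbb{N}$, and each clause of \Cref{lemma:sat_relation} becomes arithmetic in these codes, the only unbounded quantifier being the natural-number $\exists n$ in the iterative-substitution clause. The resulting statement has the shape ``$\exists X \subseteq \mathbb{N}$.~arithmetic$(X, \varphi)$'', placing $\MISL$-satisfiability in $\Sigma^1_1$.

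The main obstacle, as indicated, is legitimizing the reduction to countable models in the forward direction, since \Cref{prop:non-fmp} already shows that $\MISL$ lacks even the finite model property. I plan to obtain the countable model property by composing the embedding of $\MISL$ into the infinitary modal logic $\ML^\infty$ (\Cref{thm:frag_ML_infty}) with the standard translation of $\ML^\infty$ into $L_{\omega_1,\omega}$, and then invoking the classical Downward L\"owenheim--Skolem theorem for $L_{\omega_1,\omega}$. A more self-contained alternative would combine the bisimulation invariance of $\MISL$ (\Cref{thm:bis_inv}) with a tree-unraveling followed by a standard construction of a countable bisimilar submodel; either route suffices to complete the argument.
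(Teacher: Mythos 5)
Your proposal is correct and follows essentially the same route as the paper: reduce to clean formulas, obtain countability of models via the embedding into $\ML^\infty$ (\Cref{thm:frag_ML_infty}) and the downward L\"owenheim--Skolem property, and then characterize satisfiability of $\varphi$ by the second-order existence of an evaluation relation $E$ obeying the clauses of \Cref{lemma:sat_relation} together with $E(\varphi,m)$, with both directions verified by induction along the well-founded order $\ll$. One caveat: your ``more self-contained alternative'' for the countability step is not sound in general, since a pointed model need not have any countable bisimilar companion (consider a root with uncountably many pairwise non-bisimilar subtrees, which the Zig clause prevents any countable model from matching), so bisimulation invariance plus unraveling cannot replace the L\"owenheim--Skolem argument; this does not affect your proof, as your primary route --- which is the paper's --- already suffices.
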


\begin{proof}
Since $\MISL$ can be treated as a fragment of  $\ML^\infty$  (\Cref{thm:frag_ML_infty}) and the latter has the downward L\"owenheim-Skolem property,  
an $\mathcal{L}_\MISL$-formula is satisfiable iff it is satisfiable w.r.t. countable models $\mmodel = (\omega, R, V)$. 

In the sequel, we  only need to consider the clean $\MISL$-formulas of $\mathcal{L}_{\mathsf{clean}}$, since for an arbitrary $\MISL$-formula, we can always recursively transform it into an equivalent clean formula via  (\ref{eqn:MISL_bound_variable_renaming}). Our method is motivated by \cite{IMR}. We show that for any $\varphi\in \mathcal{L}_{\mathsf{ clean}}$, $\varphi$ is satisfiable if and only if there exist relations $R\subseteq \omega\times \omega$ and $E\subseteq \mathcal{L}_{\mathsf{clean}} \times\omega$ such that the equivalences in \Cref{lemma:sat_relation} hold and that $E(\varphi,m)$ for some $m\in\omega$. More specifically, we show that the satisfiability of $\varphi$ is equivalent to the $\Sigma_1^1$-formula~\ref{eqn:sigma_1_1_formula} that is defined in the following:
\begin{equation}\label{eqn:sigma_1_1_formula}\tag{$\varphi$-$\mathsf{SAT}$}
        \begin{split}
            \exists R\exists E\,\forall s\in \omega \, &\\
        &\neg E(\bot,s) \\
        &\wedge\forall \chi\in\mathcal{L}_{\mathsf{clean}}\,(E(\neg\chi,s)\leftrightarrow \neg E(\chi,s))\\
        &\wedge\forall \chi_1, \chi_2\in\mathcal{L}_{\mathsf{clean}}\, (E(\chi_1\land\chi_2,s)\leftrightarrow E(\chi_1,s)\land E(\chi_2,s))\\
        &\wedge\forall \chi\in\mathcal{L}_{\mathsf{clean}}\,\left(E(\Box\chi,s)\leftrightarrow \forall t\in \omega\,(Rst\rightarrow E(\chi,t))\right)\\
        &\wedge\forall \sub{p:=\psi}\chi\in\mathcal{L}_{\mathsf{clean}}\, (E(\sub{p:=\psi}\chi,s)\leftrightarrow E(\chi[\psi/p],s)),\\
        &\quad \text{ if the main connective of $\chi$ is not an iterative substitution operator}\\
        &\wedge\forall\sub{p:=\psi_0;(p:=\psi_1)^*}\chi\in\mathcal{L}_{\mathsf{clean}}\, \\
        &\qquad E(\sub{p:=\psi_0;(p:=\psi_1)^*}\chi,s)\leftrightarrow\exists n\in \mathbb{N}\,E(\sub{p:=\psi_0; (p:=\psi_1)^n}\chi,s)\\
        &\wedge\exists m\in\omega\, E(\varphi,m)
        \end{split}
    \end{equation}
%    Note that the condition in the fifth conjunction can be checked by splitting this conjunction to several sub-cases basing on the main connective of $\chi$. 
We assume the existence of an encoding of ${\bf P}$ with $\omega$ (e.g., $p_i$ is encoded by $i$), and then the formulas $\neg\chi$, $\chi_1\land\chi_2$, $\Box\chi$, $\sub{p:=\psi}\chi$, $\chi[\psi/p]$ and $\sub{p:=\psi_0; (p:=\psi_1)^*}\chi$ 
%and $\sub{p:=\psi_0; (p:=\psi_1)^n}\chi$ 
involved in  $\Sigma_1^1$-formula~\ref{eqn:sigma_1_1_formula} are actually abbreviations for the recursive functions respectively:
\begin{center}
$f_1:\; \chi\mapsto\neg\chi$ \qquad  $ f_2: \;  \chi_1,\chi_2\mapsto \chi_1\land\chi_2$  \qquad $f_3: \;  \chi\mapsto\Box\chi$ \qquad  $f_4:\; p,\psi,\chi\mapsto\sub{p:=\psi}\chi$ \vspace{2mm}\\
      \qquad $f_5:\; p,\psi,\chi\mapsto \chi[\psi/p]$\qquad $f_6:\; p,\psi_0,\psi_1,\chi\mapsto\sub{p:=\psi_0; (p:=\psi_1)^*}\chi$
\end{center}
 %   \begin{align*}
 %      f_1:\; \chi&\mapsto\neg\chi\\
 %     f_2: \;  \chi_1,\chi_2&\mapsto \chi_1\land\chi_2\\
 %     f_3: \;  \chi&\mapsto\Box\chi\\
 %       f_4:\; p,\psi,\chi&\mapsto\sub{p:=\psi}\chi\\
 %      f_5:\; p,\psi,\chi&\mapsto \chi[\psi/p]\\
 %         f_6:\; p,\psi_0,\psi_1,\chi&\mapsto\sub{p:=\psi_0; (p:=\psi_1)^*}\chi
 %   \end{align*}

Now we proceed to prove that  the satisfiability of $\varphi$ is  equivalent to the $\Sigma_1^1$-formula~\ref{eqn:sigma_1_1_formula}.    
    
    First, assume that $\varphi$ is satisfiable. Then, there is a countable model $\mmodel = (\omega, R,V)$ such that $\mmodel,m\vDash_{\MISL} \varphi$ for some $m\in\omega$. Let $E$ be a relation such that $E(p,s)$ iff $s\in V(p)$, and for any $\chi\in\mathcal{L}_{\mathsf{clean}}$, $E(\chi,s)$ is defined recursively according to the equivalences in~\Cref{lemma:sat_relation}. More specifically, define $E$ to be a relation satisfying the following: 
    \begin{align*}
        E(\bot,s) &:=\bot\\
        E(p,s) &\text{ iff }s\in V(p)\\
        E(\neg\chi,s)&:=\neg E(\chi,s)\\
        E(\chi_1\land\chi_2,s)&:=E(\chi_1,s)\land E(\chi_2,s)\\
        E(\Box\chi,s)&:=\forall t\in W\,(Rst\rightarrow E(\chi,t)) \\    
        E(\sub{p:=\psi}\chi,s)&:= E(\chi[\psi/p],s)\\
        E(\sub{p:=\psi_0;(p:=\psi_1)^*}\chi,s)&:=\exists n\in \mathbb{N} \,E(\sub{p:=\psi_0; (p:=\psi_1)^n}\chi,s)
    \end{align*}
    where the main connective of $\chi$ used in the clause for $E(\sub{p:=\psi}\chi,s)$ is not an iterative substitution operator with  $p$ as its pivot. Note that $E$ is well-defined since the recursion follows a descending chain of the well-founded order $\ll$. It can be proved by induction on formulas along relation $\ll$ that $E$ agrees with $E_\mmodel$ (the evaluation relation defined in \Cref{lemma:sat_relation}), and hence $E(\varphi,m)$ holds. Therefore we can find $R$, $E$ and $m$ that make the $\Sigma_1^1$-formula~\ref{eqn:sigma_1_1_formula} true. 

Moreover, for the other direction, when the $\Sigma_1^1$-formula~\ref{eqn:sigma_1_1_formula} is true for some $R$, $E$ and $m$, we just need to set $\mmodel = (\omega,R,V)$ where $V(p) = \{s:E(p,s)\;\textit{holds}\}$. By induction on formulas along the relation $\ll$, we can show that $\mmodel,m\vDash_{\MISL}\varphi$ is the case. 

To sum up, the satisfiability problem of $\varphi$ is in $\Sigma_1^1$. Therefore $\MISL$  is in $\Sigma_1^1$.
 \end{proof}

With \Cref{thm:translation_IMR,thm:Sigma-1-1}, we have \Cref{thm:undecidability} immediately. Let us end this part with the following:

 \paragraph{The relation between $\MISL$ and $\IMR$} We have already shown that the iterative modal relativization $\IMR$ can be translated to $\MISL$ (\Cref{thm:translation_IMR}). Also, since both  $\MISL$ and $\IMR$ are $\Sigma_1^1$-complete, there is a converse reduction in principle, which we leave for further study:

\vspace{2mm}

\noindent{\bf Open problem}\; Find a sound and faithful translation from $\MISL$ to $\IMR$.

\subsection{Undecidability of \texorpdfstring{$\MISL$}{} on finite tree models}\label{sec:undecidable-tree}

Having seen that the satisfiability problem for $\MISL$ is highly undecidable, in this part we confine ourselves to the class of finite tree models and show that even on such a simple class of models, the logic is undecidable. To achieve this, we will make use of the  {\em  post correspondence problem} (PCP), whose details are as follows:

\begin{definition}[Post correspondence problem]
    Let $\Sigma$ be an alphabet. An {\em instance} of the {post correspondence problem} over alphabet $\Sigma$ is a pair of sequences $(U,V)$ such that $U = (u_1,u_2,\dots, u_m)\in (\Sigma^*)^m$ for some $m\in\mathbb{N}$, $V = (v_1,v_2,\dots,v_m)\in(\Sigma^*)^m$ and the pairs $(u_1, v_1),(u_2, v_2),\dots,(u_m, v_m)$ are distinct. The {\em post correspondence problem} is to find whether there is a finite sequence of pairs $(u_{i_1}, v_{i_1}),(u_{i_2}, v_{i_2}),\dots, (u_{i_l},v_{i_l})$ such that for any $1\le j\le l$, $i_j\in \{1,\dots,m\}$ and $u_{i_1}u_{i_2}\dots u_{i_l} = v_{i_1}v_{i_2}\dots v_{i_l}$, which is called a {\em solution} to the problem. 
\end{definition}

We will work with the PCP over the alphabet $\Sigma = \{a,b\}$ with two elements, which is undecidable \cite{post}. We prove the undecidability of $\MISL$ over finite tree models by showing that  for any instance of PCP over $\Sigma$, there exists an $\MISL$-formula $\varphi$ such that $\varphi$ is satisfied in a finite tree model if and only if there is a solution to the PCP.

\begin{theorem}
    For any instance $(U,V)$ of the post correspondence problem over alphabet $\Sigma=\{a,b\}$, there is an $\MISL$-formula $\varphi(U,V)$ such that 
    \begin{center}
        $\mmodel,w\vDash_{\MISL} \varphi(U,V)$ for some finite tree model $\mmodel$ \quad iff \quad $(U,V)$ has a solution.
    \end{center}
\end{theorem}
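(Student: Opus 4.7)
The plan is to encode PCP solutions as finite tree models. Given a PCP instance $(U,V)$ with $U = (u_1, \ldots, u_m)$ and $V = (v_1, \ldots, v_m)$ over $\{a,b\}$, I construct an $\MISL$-formula $\varphi(U,V)$ whose intended satisfying models are chain-shaped finite trees $w_0 \to w_1 \to \cdots \to w_N$ representing the concatenated string $u_{i_1}\cdots u_{i_l} = v_{i_1}\cdots v_{i_l}$ of a prospective solution. Each chain node is labelled with a character from $\{a,b\}$; auxiliary propositions $U_1,\ldots,U_m$ mark those nodes that begin a U-piece (together with its index), the analogous $V_1,\ldots,V_m$ mark the starts of V-pieces, and a proposition $\mathit{end}$ marks the leaf.

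I will assemble $\varphi(U,V)$ as a conjunction of several parts. Standard modal formulas enforce the chain shape (at most one successor everywhere, no successor at $\mathit{end}$) and the uniqueness of labels at each node. Bounded-depth formulas express the local constraint that wherever some $U_i$ holds, the next $|u_i|$ chain positions spell out $u_i$, and the position immediately beyond is either $\mathit{end}$ or carries some $U_j$; the symmetric condition is imposed on the V-side. To insist that these local conditions hold at every node of the chain, I use an iteration of the form $[q:=\phi;(q:=q\land\Box q)^*]q$, which evaluated at the root of a tree forces $\phi$ to hold at every descendant. The root is required to satisfy $U_i\land V_i$ for some common $i$, and $\mathit{end}$ is required to terminate simultaneously the last U-piece and the last V-piece.

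The crucial conjunct enforces that the sequence of U-indices read at U-boundaries from root to leaf equals the sequence of V-indices read at V-boundaries. I introduce two fresh pointer propositions $p_U, p_V$ initially holding exactly at the root, and set up a joint iterative substitution that at each step verifies that the current $p_U$-node carries $U_i$ and the current $p_V$-node carries $V_i$ for a common $i$, and then advances $p_U$ to the next U-boundary along the chain and $p_V$ to the next V-boundary. Because both pointers start at the root and the chain is finite, the iteration terminates with both pointers at $\mathit{end}$, certifying that the full U- and V-index sequences coincide and hence that the encoded data is a genuine PCP solution.

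The main technical obstacle is realising the forward advancement of $p_U$ and $p_V$ inside $\MISL$: the primitive $\Diamond$ propagates information from successors to predecessors, whereas moving a pointer toward the leaf goes in the opposite direction. I plan to sidestep this by first computing the ancestor closure of each U-boundary via the iterative substitution $\sub{(q:=q\lor\Diamond q)^*}$, and then characterising ``the next U-boundary after the current $p_U$-node'' as the unique U-boundary whose ancestor set contains the $p_U$-marked node but contains no other U-boundary strictly between them; the symmetric construction handles $p_V$. Assembling these conjuncts yields $\varphi(U,V)$: a satisfying finite tree model exactly encodes a PCP solution, and conversely, given any solution $i_1,\ldots,i_l$, the concatenation chain with the obvious character and boundary labelling satisfies $\varphi(U,V)$, as can be verified by induction on the iterative substitutions involved.
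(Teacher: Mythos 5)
Your encoding takes a genuinely different route from the paper's (which attaches to each configuration state a \emph{pair} of branches spelling a $U$-string and a $V$-string, grows candidates by an iterated $\mathsf{Append}(i,\cdot)$ that adds $u_i$ and $v_i$ with the same index, and then checks equality of the two branches), but the crucial step of your construction is not available in the logic. The ``joint iterative substitution'' that advances the two pointer propositions $p_U$ and $p_V$ in lockstep is not expressible in $\mathcal{L}_{\MISL}$: the iteration operator $\sub{(p:=\psi)^*}$ has a single pivot, and an iterated \emph{simultaneous} update of two variables, $\sub{(p:=\psi;q:=\chi)^*}\varphi$, is exactly the extension the paper explicitly defers to future work. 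Nor can you collapse the two pointers into one proposition holding at two states of your single chain: from a truth set $\{x_U,x_V\}$ nothing distinguishes which state is the $U$-pointer and which is the $V$-pointer (they may even coincide, e.g.\ at the root and at $\mathit{end}$), so the update formula cannot be written; and running the two pointer iterations as separate $\sub{(\cdot)^*}$ operators destroys precisely the synchronization you need, since each carries its own independent existential ``for some $n$'' and there is no way to compare the $k$-th $U$-index with the $k$-th $V$-index. Since matching the index sequences is the entire content of PCP-hardness in your single-chain design, the argument does not go through as stated.

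Two further, smaller problems: the semantics of $\sub{(p:=\psi)^*}$ has no ``verify and abort'' step, so your per-step check that the two pointers carry a common index $i$ would have to be folded into the substitution body and the final condition (repairable, but not spelled out); and your description of ``the next $U$-boundary'' as ``the unique $U$-boundary whose ancestor set contains the $p_U$-node and no other $U$-boundary strictly between'' quantifies over individual boundaries and their ancestor sets, which the object language cannot do directly -- on a chain this might be repaired with nested iterations expressing reachability of the current pointer set, but an explicit formula is needed. Note how the paper sidesteps both difficulties: index agreement is built into $\mathsf{Append}(i,\mathsf{cand}^l)$ (the same $i$ is appended on both branches in one step of a single-pivot iteration), and equality of the two strings is tested without any moving pointers by the depth-synchronizing formulas $[q:=a;(q:=\Diamond q)^*](\Diamond q\rightarrow\Box q)$ and its $b$-analogue in $\mathsf{Accept}$.
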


\begin{proof}
Before going into the proof details, let us note that in the proof we will assume that models $\mmodel=(W,R_c,R_d,V)$ contain two relations $R_c$ and $R_d$, and corresponding to this, we will use two different modalities $\Diamond_c$ and $\Diamond$ in our language: $\Diamond_c$ moves only along $R_c$ and $\Diamond$ moves only along $R_d$. With the help of propositional letters, the two modalities and the two relations can be encoded by our standard case with only one modality and one relation (cf. e.g., \cite{LHS-journal,Chenqian2023}), but the usage of the two relations would make the proof easier to be understood. Let us now begin.
    
    Suppose that $U = \{u_1,\dots, u_m\}$ and $V = \{v_1,\dots,v_m\}$.  In the sequel we first construct the formula $\varphi(U,V)$ such that if $\mmodel,w\vDash_{\MISL} \varphi(U,V)$ then a solution to the PCP instance $(U,V)$ can be `read' immediately from $(\mmodel,w)$. Before showing the  details of the construction, let us first  explain the key ideas:
    \begin{itemize}
     \item The construction will involve  three kinds of states, including the witness state, the configuration states, and the states on the branches associated to the configuration states. 
     %Also, the relation $R_c$ is defined on the class of configuration states, while  $R_d$ is used to define configuration states and to bridge the witness state and some configuration states.
        \item   The state $w$ denotes the  `{\em witness state}' in  $\mmodel$, and its  $R_d$-successors are  `{\em configuration states}', whose intuition will be made clear in the item below. Also, we will use $R^*_d$ for the {\em transitive closure of $R_d$},  and use $\Box^*$ for the corresponding modality that is incorporated from $\IMR$: since we have proved that $\IMR$ can be translated into $\MISL$ (\Cref{thm:translation_IMR}), we can use $\Box^*$ directly whenever needed.
        \item Along the relation $R_c$,  configuration states can only reach configuration states, while via $R_d$, each   configuration state is associated with two $R_d$-branches, a `left' branch and a `right' branch, which intuitively stand for a pair of  strings in $(\Sigma^*)^*\times (\Sigma^*)^*$. Given a configuration state, the associated pair of branches is called a `{\em configuration}' of the configuration state .\footnote{It is important to keep in mind that  a configuration state is different from a configuration: a configuration state is a state that leads to a pair of branches, and such a pair is a configuration.}

        \item The formula $\varphi(U,V)$ constructed will apply iterative substitutions to search for configuration states  whose configurations are pairs of the form $(u_{i_1}u_{i_2}\dots u_{i_l}, v_{i_1}v_{i_2}\dots v_{i_l})$, until the configuration of a configuration state is found to be the solution. In what follows, the configuration states whose configurations are of this form will also be called `{\em candidate states}', and corresponding to this, those configurations will also be called `{\em candidates}' (of the solution of PCP).\footnote{In a candidate $(u_{i_1}u_{i_2}\dots u_{i_l}, v_{i_1}v_{i_2}\dots v_{i_l})$, both   the `$u$'-part and   the `$v$'-part contain $l$ strings, but a configuration does not have to be so.}
    \end{itemize}

    We first define the configuration states, and each of them is associated with a left $R_d$-branch and a right $R_d$-branch. Each branch corresponds to  strings $s_1s_2\dots s_l$ with $l\in\mathbb{N}$ and $s_i\in\Sigma^*$. Every node on the left branch is tagged by the proposition $\mathsf{left}$, one of the propositional letters $a$ and $b$, and one of the propositional letters $\mathsf{odd}$ and $\mathsf{even}$ for parities of natural numbers. Similarly for the nodes on the right branch, except that each node on the right branch is tagged by the proposition $\mathsf{right}$. In this way, any path residing in a  branch can be viewed as a string by reading the propositions $a,b$ along the relation $R_d$. The parity propositions $\mathsf{odd}$ and $\mathsf{even}$ denote the separation of strings, which helps to read the strings encoded by a branch. For instance, when the first half $s_1$ of a branch consists of some $\mathsf{odd}$-states and the second half $s_2$ consists of some $\mathsf{even}$-states, we read the branch as $s_1s_2$. 

    Let $\oplus$ denote XOR (i.e., {\em exclusive or}). Let $\mathsf{left}$, $\mathsf{right}$, $\mathsf{odd}$, 
 $\mathsf{even}$,  $a$ and $b$ be propositions that will function as mentioned. Define the formula $\mathsf{Configuration}$ characterizing the configuration states as follows: 
    \begin{align*} \mathsf{Configuration}:=
        &(\neg \mathsf{odd}\land \neg\mathsf{ even}\land \neg \mathsf{left}\land \neg \mathsf{right})\land &  (1)\\
   %     &\left((\Box\bot\land\mathsf{start})\lor (\Diamond\top\land \neg\mathsf{start}) \right)\land & (2)\\
        \big(&\langle p:=\Box\bot\land \mathsf{left}\land (\mathsf{odd}\oplus \mathsf{even})\land (a\oplus b);(p:=\mathsf{left} \land (\mathsf{odd}\oplus \mathsf{even})\land (a\oplus b)\land \Diamond p)^*; & (2)\\
        &q:=\Box\bot\land \mathsf{right}\land (\mathsf{odd}\oplus \mathsf{even})\land (a\oplus b);(q:=\mathsf{right} \land (\mathsf{odd}\oplus \mathsf{even})\land (a\oplus b)\land \Diamond q)^*\rangle &\\
        &\left(\Diamond\top\rightarrow \left(\Diamond p\land \Diamond q\land \Box(p\lor q)\right)\right)\big)\land &\\
        & \bigwedge_{\begin{subarray}{l}
            X\in\{\mathsf{left},\mathsf{right}\},\, Y\in\{\mathsf{odd},\,\mathsf{even}\},\, s\in\{a,b\}
        \end{subarray}}[q:=X\land Y\land s;(q:=\Diamond q)^*](\Diamond q\rightarrow \Box(X\rightarrow q)) &(3)
    \end{align*}
    
The conjunct (2) says that each branch associated to a configuration state is either a $\mathsf{left}$-branch or a $\mathsf{right}$-branch, on which each node is tagged with exactly one of $\{\mathsf{odd}, \mathsf{even}\}$ and with exactly one of $\{a,b\}$. The last conjunct (3) says that all $\mathsf{left}$-branches of the configuration state represent the same sequence of strings, and the same for right branches. Note that for any $n\in\mathbb{N}$, after the substitution $q:=X\land Y\land s;(q:=\Diamond q)^n$, the truth set of $q$ consists of states that can reach a state tagged by $X$, $Y$ and $s$ via an $R_d$-path of length $n$. Moreover, by construction, it might be the case that a configuration state has the empty configuration, and we use $\mathsf{start}$ for this situation, i.e., 
    \[\mathsf{start}:=\neg \mathsf{odd}\land \neg\mathsf{ even}\land \neg \mathsf{left}\land \neg \mathsf{right}\land \Box\bot,\]
which implies $\sf Configuration$. 
    
The witness state only reaches configuration states and the states on the configurations associated to those configuration states  in finitely many steps along $R_d$, and those configuration states only reach configuration states via finitely many steps along $R_c$.  The formula $\mathsf{Witness}$ that characterizes the witness state is defined as follows: 
\[
\mathsf{Witness}:=\Diamond\top\land  \Box( \mathsf{Configuration}\land \Box_c\mathsf{Configuration})
\]
    
 \noindent  A form of the witness state is  depicted in \Cref{fig:witness_state}. 

\begin{figure}
    \centering
\begin{tikzpicture}
\node(w)[circle,draw,inner sep=0pt,minimum size=1mm,fill=black] at (0,0) [label=above:$w$] {};

\node(l)[circle,draw,inner sep=0pt,minimum size=1mm,fill=black] at (-3,-1)[label=above:$c_1$]  {};

\node(r)[circle,draw,inner sep=0pt,minimum size=1mm,fill=black] at (3,-1) [label=above:$c_2$]  {};

\node(ll)[circle,draw,inner sep=0pt,minimum size=1mm,fill=black] at (-4,-2)[label=left:$c_3$] {};

\node(lr)[circle,draw,inner sep=0pt,minimum size=1mm,fill=black] at (-2,-2)[label=right:$c_4$] {};

\node(rl)[circle,draw,inner sep=0pt,minimum size=1mm,fill=black] at (2,-2) [label=left:$c_5$] { };

\node(rr)[circle,draw,inner sep=0pt,minimum size=1mm,fill=black] at (4,-2)[label=right:$c_6$] {};

\draw[->](w) to node [above] {$R_d$} (l);
\draw[->](w) to node [above] {$R_d$}  (r);
\draw[->](l) to node [left] {$R_c$} (ll);
\draw[->](l) to node [right] {$R_c$} (lr);

\draw[->](r) to node [left] {$R_c$} (rl);
\draw[->](r) to node [right] {$R_c$} (rr);

\node(rll)[circle,draw,inner sep=0pt,minimum size=1mm,fill=black] at (1,-3) [label=left:$\mathsf{odd,left}$] {};

\node(rlr)[circle,draw,inner sep=0pt,minimum size=1mm,fill=black] at (3,-3) [label=right:$\mathsf{even,right}$] {};

\node(rll1)[circle,draw,inner sep=0pt,minimum size=1mm,fill=black] at (1,-4) [label=left:$\mathsf{odd,left}$] {};

\node(rlr1)[circle,draw,inner sep=0pt,minimum size=1mm,fill=black] at (3,-4) [label=right:$\mathsf{odd,right}$] {};

\node(rll2)  at (1,-4.3)  {$\vdots$};

\node(rlr2)  at (3,-4.3)  {$\vdots$};

\node(rll3)[circle,draw,inner sep=0pt,minimum size=1mm,fill=black] at (1,-5.5) [label=left:$\mathsf{even,left}$] {};

\node(rlr3)[circle,draw,inner sep=0pt,minimum size=1mm,fill=black] at (3,-5.5) [label=right:$\mathsf{odd,right}$] {};

\draw[->](rl) to  node [left] {$R_d$}  (rll);
\draw[->](rl) to  node [right] {$R_d$} (rlr);
\draw[->](rll) to node [left] {$R_d$}  (rll1);
\draw[->](rlr) to node [right] {$R_d$} (rlr1);

 \draw[->](rll2) to node [left] {$R_d$} (rll3);
\draw[->](rlr2) to node [right] {$R_d$} (rlr3);

\end{tikzpicture}
    \caption{The state $w$ is a witness state, and along  $R_d$, it can only reach configuration states $c_1$ and $c_2$; also, all states $c_3$-$c_6$ that can be reached from $c_1$ and $c_2$ along  $R_c$  are configuration states. For each $1\le i\le 6$, the configuration state $c_i$ is associated with a pair of $R_d$-branches that give the configuration of $c_i$, and in the picture, only the configuration of $c_5$ is depicted, and those for other configuration states are omitted. As we can see, each state on the left branch is labelled with $\mathsf{left}$, and each one on the right is labelled with $\mathsf{right}$. However, the information for $a$ and $b$ is omitted.}
    \label{fig:witness_state}
\end{figure}
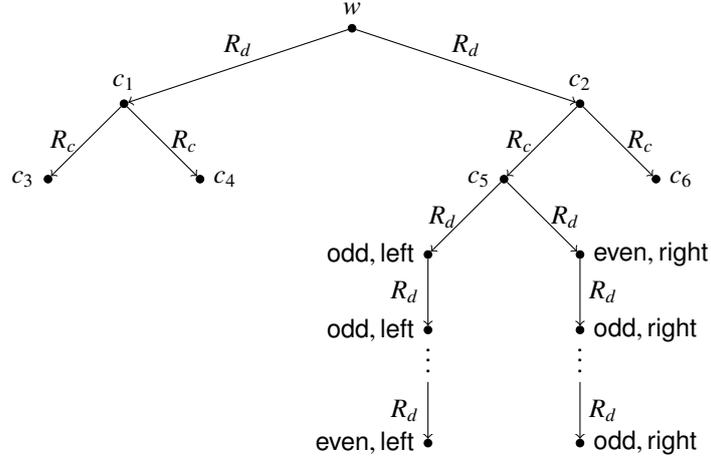

 Now we will construct formulas that are useful in defining candidate states, a special kind of configuration states.
 For $X\in\{\mathsf{left},\mathsf{right}\}$, $Y\in\{\mathsf{odd},\mathsf{even}\}$, and a non-empty $s\in\Sigma^*$, we define $\mathsf{String}(X,Y,s)$ recursively as follows:
    \begin{align*}
        &\text{For }s\in\{a,b\},\,\mathsf{String}(X,Y,s):=X\land Y\land s\land \Box\bot. \\
        &\text{For }k\ge 2,s^i\in\{a,b\},\,\mathsf{String}(X,Y,s^1s^2\dots s^k):=X\land Y\land s^1\land \Diamond \top\land \Box\mathsf{String}(X,Y,s^2\dots s^k). 
    \end{align*}
So,  $\mathsf{String}(X,Y,s)$ is true at states $u$ such that all states on an `$s$-path' (which is an $R_d$-path) starting from $u$ and ending with a dead end are tagged with $X$ and $Y$.  
    
  %  We use $\mathsf{EndString(X,Y,s)}$ for the states that can reach a state satisfying $\mathsf{String}(X,Y,s)$ along the relation $R_d$ in finitely many steps: 
 %   \[\mathsf{EndString}(X, Y, s):= \Diamond^*\mathsf{String}(X,Y,s)\]

    Next, let us proceed to define candidate states. As mentioned, we call a pair   of strings of form 
    \[
    (u_{i_1}u_{i_2}\dots u_{i_l}, v_{i_1}v_{i_2}\dots v_{i_l})
    \]
     a candidate (of the solution). For any  $l\in\mathbb{N}$, we say that a candidate is of order $l$ if it is a pair   of strings of form $(u_{i_1}u_{i_2}\dots u_{i_l}, v_{i_1}v_{i_2}\dots v_{i_l})$. Meanwhile we say that a candidate state is of order $l$ if its candidate is of order $l$. 

 Now, we are going to recursively construct a class $\{\mathsf{ cand}^{l}: l\in \mathbb{N}\}$ of formulas: intuitively, for each $l\in\mathbb{N}$, $\mathsf{cand}^{l}$ represents a set of candidate states of order $l$.  It is useful to keep in mind that our construction below will ensure that the configuration states in ${\sf cand}^l$ for each $l$ are all candidate states that have the same candidate $(u_{i_1}u_{i_2}\dots u_{i_l},v_{i_1}v_{i_2}\dots v_{i_l})$ for some sequence of $i_1i_2\dots i_l$. 
    %\cyan{(and more precisely, the candidate states with the same candidates of order $l$)?}. 
    For the basic case $l=0$, we define that:
    \[
    \mathsf{cand}^0:=\mathsf{start}
    \]
referring to the  candidate states with the empty string as their candidates. Now, for any $l\in\mathbb{N}$, we present a method to construct $\mathsf{cand}^{l+1}$ from $\mathsf{cand}^l$. 
    
  %  \cyan{To exclude the case that candidate states in $\mathsf{cand}^{l}$ are associated with different candidates (of order $l$)}, let us first define formulas $\mathsf{UniquePredecessor}(\mathsf{cand}^l)$ saying that a candidate state only has access to (along $R_c$) candidate states in $\mathsf{cand}^l$ that have the same candidate: 
    
  %  \[   \cyan{\mathsf{UniquePredecessor}(\mathsf{cand}^l)}:=\Diamond_c\mathsf{cand}^l\land \bigwedge_{\begin{subarray}{l}              X\in\{\mathsf{left},\mathsf{right}\}\\Y\in\{\mathsf{odd},\mathsf{even}\}\\ s\in\{a,b\}
 %   \end{subarray}}[q:=X\land Y\land s;(q:=\Diamond q)^*](\Diamond_c\Diamond q\rightarrow \Box_c\Box(X\rightarrow q)) \]

Suppose for now that the candidates of all the candidate states in ${\sf cand}^l$ are the same, e.g., the pair of sequences of strings $(U',V')$.\footnote{This property is not yet true but will be stated in the formula ${\sf Append}(i,{\sf cand}^l)$, i.e., if ${\sf Append}(i,{\sf cand}^l)$ is satisfied on any configuration state $\alpha$, it must be the case that the candidates of all candidate states in ${\sf cand}^l$ are the same, and this candidate pair of strings is a prefix of the pair of string encoded by $\alpha$.}  Intuitively, we are going to make the elements in $\mathsf{cand}^{l+1}$ be candidate states with the candidate extending $(U',V')$ with $(u_i,v_i)$ for a fixed $i\in\{1,\dots,m\}$.\footnote{Notice that we have assumed that the instance $(U,V)$ of the post correspondence problem we are working on is given by $U=\{u_1,\dots,u_m\}$ and $V=\{v_1,\dots,v_m\}$, which give us the natural number $m$.} This is characterized by the formula $\mathsf{ Append}(i, \mathsf{cand}^l)$: 
    \begin{align*}
        &\mathsf{Append}(i, \mathsf{ cand}^l):=\Diamond_c\mathsf{cand}^l\land \\
        &\qquad \left(\sub{q:=\mathsf{even}\land\Box\bot;(q:=\Diamond q)^*}\Diamond_c q\rightarrow (\Diamond^*\mathsf{String(left,odd,}u_i)\land \Diamond^*\mathsf{String(right,odd,}v_i))\right)\land\\
        &\qquad \left(\sub{q:=\mathsf{odd}\land \Box\bot;(q:=\Diamond q)^*} \Diamond_c q\rightarrow (\Diamond^*\mathsf{String(left,even,}u_i)\land \Diamond^*\mathsf{String(right,even,}v_i))\right)\land\\
         &\qquad \bigwedge_{\begin{subarray}{l}
        X\in\{\mathsf{left},\mathsf{right}\}\\Y\in\{\mathsf{odd},\mathsf{even}\}\\ s\in\{a,b\}
        \end{subarray}}[q:=X\land Y\land s;(q:=\Diamond q)^*]\left(\Diamond_c q\rightarrow q\right)\land\\
        &\qquad \bigwedge_{\begin{subarray}{l}  X\in\{\mathsf{left},\mathsf{right}\}\\Y,Y'\in\{\mathsf{odd},\mathsf{even}\}\\ s\in\{a,b\}
        \end{subarray}}                
        [q:=X\land Y'\land s\land\Diamond\mathsf{String}(X,Y,w_i^X);(q:=\Diamond q)^*]\left(q\rightarrow \Box_c q\right)
    \end{align*}
    where $w^X_i:=\begin{cases}
        u_i, & \text{ if } X\text{ is }\mathsf{left},\\
        v_i, &\text{ if }  X\text{ is }\mathsf{right}.\end{cases}$

        \vspace{2mm}

The first conjunct in $\mathsf{Append}(i,\mathsf{cand}^l)$ expresses that the current state has at least an $R_c$-successor that is a candidate state in $\mathsf{cand}^l$. %(whose configuration is denoted as $(U',V')$)
%\footnote{But it need not be the case that all  configuration states with the configuration $(U',V')$ are  elements of $\mathsf{cand}^l$. Also, since candidate states are special configuration states, all the configuration states that are $R_c$-successors of a state in $\mathsf{cand}^l$ are also candidate states with the same candidate.} 
By the second and the third conjuncts, the configuration of the current state has the form $(U_1u_i,V_1v_i)$ ending with $(u_i,v_i)$ and tagged with the proper parity (the parity of $l+1$).  Since the current state can reach a candidate state in $\mathsf{cand}^l$ via $R_c$ and we have assumed that  the candidate states in $\mathsf{cand}^l$ have the same candidate $(U',V')$,  the fourth conjunct ensures that $(U_1u_i,V_1v_i)$ has $(U',V')$ as its prefix. But by the second and the third conjuncts, the parity of $(u_i,v_i)$ is different from the final segment of $(U',V')$, and so it must be the case that $(U_1,V_1)$ has $(U',V')$ as its prefix. 
%\blue{Also, let  $(U_2,V_2)$ and $(U_3,V_3)$ denote arbitrary candidates of the candidate states reached by the current state through $R_c$. Now, the fourth conjunct ensures that $(U',V') = (U'',V'')$,  $U'$ is a prefix of $U_1$ and $V'$ is a prefix of $V_1$. This conjunct ensures that the configurations of all candidate states that are reachable from the current state are the same. 
By the fifth conjunct, $(U_1,V_1)$ is a prefix of $(U',V')$, and combining the fourth conjunct, we have $(U_1,V_1) = (U',V')$. 
%the configuration obtained by removing the last pair $(u_i,v_i)$ from the configuration of the current state is a prefix of $(U',V')$.
Putting the five conjuncts together, we know that the configuration of the current state is obtained by appending $(u_i,v_i)$ to $(U',V')$. A form of a candidate state satisfying $\mathsf{Append}(i, \mathsf{cand}^l)$ is described in \Cref{fig:candidate_state_l}.

\begin{figure} 
    \centering
\begin{tikzpicture}
\node(A)[circle,draw,inner sep=0pt,minimum size=1mm,fill=black] at (0,0) [label=above:$A$] {};

\node(cl)[circle,draw,inner sep=0pt,minimum size=1mm,fill=black] at (-3,-1) [label=above:$B$] {};

\node(ll1)[circle,draw,inner sep=0pt,minimum size=1mm,fill=black] at (-3.5,-2)  {};

\node(lr1)[circle,draw,inner sep=0pt,minimum size=1mm,fill=black] at (-2.5,-2)  {};

\node(ll2)[circle,draw,inner sep=0pt,minimum size=1mm,fill=black] at (-3.5,-3)  {};

\node(lr2)[circle,draw,inner sep=0pt,minimum size=1mm,fill=black] at (-2.5,-3)  {};

\node(ll3)  at (-3.5,-3.3)  {$\vdots$};

\node(lr3)  at (-2.5,-3.3)  {$\vdots$};

\node(ll4)  [circle,draw,inner sep=0pt,minimum size=1mm,fill=black] at (-3.5,-4.5)  {};

\node(lr4) [circle,draw,inner sep=0pt,minimum size=1mm,fill=black] at (-2.5,-4.5)  {};

\draw[->](A) to node [above] {$R_c$} (cl);

\draw[->](cl) to node [left] {$u_{i_1}$} (ll1);

\draw[->](cl) to node [right] {$v_{i_1}$} (lr1);

\draw[->](ll1) to node [left] {$u_{i_2}$} (ll2);

\draw[->](lr1) to node [right] {$v_{i_2}$} (lr2);

\draw[->](ll3) to node [left] {$u_{i_l}$} (ll4);

\draw[->](lr3) to node [right] {$v_{i_l}$} (lr4);

\node(l1)[circle,draw,inner sep=0pt,minimum size=1mm,fill=black] at (-.5,-1)  {};

\node(r1)[circle,draw,inner sep=0pt,minimum size=1mm,fill=black] at (.5,-1)  {};

\node(l2)[circle,draw,inner sep=0pt,minimum size=1mm,fill=black] at (-.5,-2)  {};

\node(r2)[circle,draw,inner sep=0pt,minimum size=1mm,fill=black] at (.5,-2)  {};

\node(l3)  at (-.5,-2.3)  {$\vdots$};

\node(r3)  at (.5,-2.3)  {$\vdots$};

\node(l4)  [circle,draw,inner sep=0pt,minimum size=1mm,fill=black] at (-.5,-3.5)  {};

\node(r4) [circle,draw,inner sep=0pt,minimum size=1mm,fill=black] at (.5,-3.5)  {};

\node(l5)  [circle,draw,inner sep=0pt,minimum size=1mm,fill=black] at (-.5,-4.5)  {};

\node(r5) [circle,draw,inner sep=0pt,minimum size=1mm,fill=black] at (.5,-4.5)  {};

\draw[->](A) to node [left] {$u_{i_1}$} (l1);

\draw[->](A) to node [right] {$v_{i_1}$} (r1);

\draw[->](l1) to node [left] {$u_{i_2}$} (l2);

\draw[->](r1) to node [right] {$v_{i_2}$} (r2);

\draw[->](l3) to node [left] {$u_{i_l}$} (l4);

\draw[->](r3) to node [right] {$v_{i_l}$} (r4);

\draw[->](l4) to node [left] {$u_{i}$} (l5);

\draw[->](r4) to node [right] {$v_{i}$} (r5);

\node(cf)[circle,draw,inner sep=0pt,minimum size=1mm,fill=black] at (3,-1) [label=above:$C$]  {};

\draw[->](A) to node [above] {$R_c$} (cf);

\node(rl1)[circle,draw,inner sep=0pt,minimum size=1mm,fill=black] at (2.5,-2)  {};

\node(rr1)[circle,draw,inner sep=0pt,minimum size=1mm,fill=black] at (3.5,-2)  {};

\node(rl2)[circle,draw,inner sep=0pt,minimum size=1mm,fill=black] at (2.5,-3)  {};

\node(rr2)[circle,draw,inner sep=0pt,minimum size=1mm,fill=black] at (3.5,-3)  {};

\node(rl3)  at (2.5,-3.3)  {$\vdots$};

\node(rr3)  at (3.5,-3.3)  {$\vdots$};

\node(rl4)  [circle,draw,inner sep=0pt,minimum size=1mm,fill=black] at (2.5,-4.5)  {};

\node(rr4) [circle,draw,inner sep=0pt,minimum size=1mm,fill=black] at (3.5,-4.5)  {};

\draw[->](cf) to node [left] {$u_{i_1}$} (rl1);

\draw[->](cf) to node [right] {$v_{i_1}$} (rr1);

\draw[->](rl1) to node [left] {$u_{i_2}$} (rl2);

\draw[->](rr1) to node [right] {$v_{i_2}$} (rr2);

\draw[->](rl3) to node [left] {$u_{i_l}$} (rl4);

\draw[->](rr3) to node [right] {$v_{i_l}$} (rr4);
\end{tikzpicture}
    \caption{In the picture, links labeled with $R_c$ are edges in $R_c$, links without any relation-label are edges of $R_d$, and the labels $u_{i_j}$ and $v_{i_k}$ of the $R_d$-edges mean that the states on the edges give us strings $u_{i_j}$ and $v_{i_k}$. The state $A$ is a candidate state satisfying $\mathsf{Append}(i, \mathsf{cand}^l)$, and it has an $R_c$-successor $B$ that is a candidate state in $\mathsf{cand}^l$, in which all candidate states have the same candidate. Also, $A$ may have other $R_c$-successors $C$ that are configuration states not in $\mathsf{cand}^l$, but those $C$s have the candidate of $B$ as their candidate. Moreover, for each pair of branches depicted, states on the left branch satisfy the property $\mathsf{left}$, and those on the right branch satisfy the property $\mathsf{right}$.}  
    \label{fig:candidate_state_l}
\end{figure}
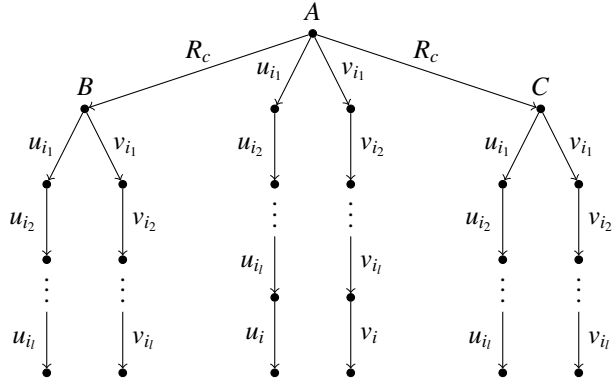

Now we can define a set of candidate states of order $(l+1)$  to be the truth set of $\mathsf{cand}^{l+1}$: 
\[
\mathsf{cand}^{l+1}:= \bigvee_{1\le i\le m}\mathsf{Append}(i,\mathsf{cand}^l) 
\]
Note that $\mathsf{cand}^{l+1}$ only accounts for the candidate states of order $(l+1)$ that have those candidate states in $\mathsf{cand}^l$  as their  $R_c$-successors, but not for arbitrary  candidate states of order $(l+1)$.% In another word, ${\sf cand}^{l+1}$ is a subset of all order-$(l+1)$ candidate states. 
    
In the end we need to define when a configuration state can be treated as a solution to the PCP instance. We justify the acceptance of a configuration state by testing whether its `$\mathsf{left}$-string' and `$\mathsf{right}$-string' are identical, using the $\mathsf{Accept}$ formula as follows: 
\[
\mathsf{Accept}:=\Diamond\top \land \left([q:=a;(q:=\Diamond q)^*](\Diamond q\rightarrow \Box q)\right)\land\left([q:=b;(q:=\Diamond q)^*](\Diamond q\rightarrow \Box q)\right) 
\]
The second and the third conjuncts are analogous, and let us explain the second one. It intuitively states that for a given candidate state, if it can reach $a$ in some fixed $n$ steps for an arbitrary number $n$, then it must reach $a$ in $n$ steps on both the $\mathsf{left}$-branch and the $\mathsf{right}$-branch. So, the formula ensures that the $\mathsf{left}$-branch and the $\mathsf{right}$-branch of a candidate state are the same.   
Finally, define the formula $\varphi(U,V)$  as follows: 
\[
\varphi(U,V):= \mathsf{Witness}\land
        \sub{\mathsf{cand}:=\mathsf{cand}^0;(\mathsf{cand}:=\bigvee_{1\le i\le m}\mathsf{Append}(i, \mathsf{cand}))^*}\Diamond(\mathsf{cand}\land \mathsf{Accept})
\]

If $\mmodel,w\vDash_{\MISL} \varphi(U,V)$, then $w$ is the witness state that can reach some candidate states via $R_d$. Also, recall that $\mathsf{cand}^0$ means the set of candidate states having the empty string as their candidate, and $\mathsf{cand}^l$ is the truth set of the variable $\mathsf{cand}$ obtained by  the $l$-th iterative substitution. For all $l\in\mathbb{N}$, as explained for the construction of $\mathsf{Append}(i, \mathsf{ cand}^l)$, the truth set of $\mathsf{cand}^l$ is a subset of the set of all candidate states of order $l$.  Therefore, when $\mmodel,w\vDash_{\MISL} \varphi(U,V)$, $w$ has an $R_d$-successor that is a candidate state of suitable order corresponding to the solution of the PCP instance $(U,V)$. 
%Therefore when $\mmodel,w\vDash_{\MISL} \varphi(U,V)$, there exists some $n\in \mathbb{N}$ s.t. there is a candidate state of order $n$ in the set of $R^*_d$-successors of $w$ corresponding to the solution of the PCP instance $(U,V)$. 

On the other hand, assume that $(U,V)$ has a solution 
$(u_{i_1}u_{i_2}\dots u_{i_l}, v_{i_1}v_{i_2}\dots v_{i_l})$.  Consider the model  $\mmodel$ depicted in \Cref{fig:PCP_model}.
In the model, the  witness state  is $w$, which has an $R_d$-successor $c_n$ that is a configuration state,  and there is an $R_c$-sequence $(c_n,\dots,c_0)$ starting from $c_n$ (and so all these states $c_0, \dots,c_n$ are configuration states). According to the construction of $\varphi(U,V)$, it can be verified that the truth set of $\mathsf{ cand}^i$ is $\{c_i\}$ for $0\le i\le l$, and that $\mmodel,c_l\vDash_{\MISL} \mathsf{Accept}$. Therefore $\mmodel,w\vDash_{\MISL} \varphi(U,V)$. 

\begin{figure}
    \centering
\begin{tikzpicture}
\node(w)[circle,draw,inner sep=0pt,minimum size=1mm,fill=black] at (1.5,1.5) [label=above:$w$] {};

\node(cn)[circle,draw,inner sep=0pt,minimum size=1mm,fill=black] at (0,0) [label=above:$c_n$] { };

\node(cn-1)[circle,draw,inner sep=0pt,minimum size=1mm,fill=black] at (-1.5,0) [label=above:$c_{n-1}$] { };

\node(cn-2)  at (-2,0)  {$\dots$};

\node(c1)[circle,draw,inner sep=0pt,minimum size=1mm,fill=black] at (-3.5,0) [label=above:$c_{1}$] { };

\node(c0)[circle,draw,inner sep=0pt,minimum size=1mm,fill=black] at (-5,0) [label=above:$c_{0}$] { };

\node(c1l)[circle,draw,inner sep=0pt,minimum size=1mm,fill=black] at (-4,-1.5)  { };

\node(c1r)[circle,draw,inner sep=0pt,minimum size=1mm,fill=black] at (-3,-1.5)  { };

\node(dots)  at (-1.75,-1.3)  {$\dots$};

\node(cnl)[circle,draw,inner sep=0pt,minimum size=1mm,fill=black] at (-.5,-1.5)  { };

\node(cnr)[circle,draw,inner sep=0pt,minimum size=1mm,fill=black] at (.5,-1.5)  { };

\node(cnl1)[circle,draw,inner sep=0pt,minimum size=1mm,fill=black] at (-.5,-2.5)  { };

\node(cnr1)[circle,draw,inner sep=0pt,minimum size=1mm,fill=black] at (.5,-2.5)  { };

\node(dotsl)  at (-.5,-2.8)  {$\vdots$};

\node(dotsr)  at (.5,-2.8)  {$\vdots$};

\node(cnl2)[circle,draw,inner sep=0pt,minimum size=1mm,fill=black] at (-.5,-4)  { };

\node(cnr2)[circle,draw,inner sep=0pt,minimum size=1mm,fill=black] at (.5,-4)  { };

\draw[->](w) to node [right] {$R_d$}  (cn);
\draw[->](cn) to node [above] {$R_c$}  (cn-1);
\draw[->](cn-2) to node [above] {$R_c$}  (c1);
\draw[->](c1) to node [above] {$R_c$}  (c0);
\draw[->](c1) to node [left] {$\mathsf{odd},u_{i_1}$} (c1l);
\draw[->](c1) to node [right] {$\mathsf{odd},v_{i_1}$}  (c1r);

\draw[->](cn) to node [left] {$\mathsf{odd},u_{i_1}$} (cnl);
\draw[->](cn) to node [right] {$\mathsf{odd},v_{i_1}$}  (cnr);

\draw[->](cnl) to node [left] {$\mathsf{even},u_{i_1}$} (cnl1);
\draw[->](cnr) to node [right] {$\mathsf{even},v_{i_1}$}  (cnr1);

\draw[->](dotsl) to node [left] {$\mathsf{parity}(n),u_{i_n}$}  (cnl2);

\draw[->](dotsr) to node [right] {$\mathsf{parity}(n),v_{i_n}$}  (cnr2);

\end{tikzpicture}
    \caption{A model satisfying $\varphi(U,V)$, given that $(u_{i_1}u_{i_2}\dots u_{i_l}, v_{i_1}v_{i_2}\dots v_{i_l})$ is a solution. In the model, links with the label $R_d$ are edges of $R_d$; links with the label $R_c$ are edges of $R_c$; and links without any relation-label are edges in the transitive closure $R^*_d$ of $R_d$. The labels $\mathsf{even}$ and $\mathsf{odd}$ for links mean that the states on the edges of $R^*_d$ are $\mathsf{even}$ and $\mathsf{odd}$ respectively, and  $\mathsf{parity}(n)$ gives us the parity of the number $n$. Also,  labels $u_{i_j}$ and $u_{v_k}$ for links mean that the states on the edges of $R^*_d$ give us the strings $u_{i_j}$ and $u_{v_k}$ respectively. In the picture, all $c_0,\dots,c_n$ are configurations states, but we only draw the configurations for states $c_0,c_1$ and $c_n$ (notice that $c_0$ has the empty string as its configuration), and the configurations of other configuration states are omitted.}
    \label{fig:PCP_model}
\end{figure}
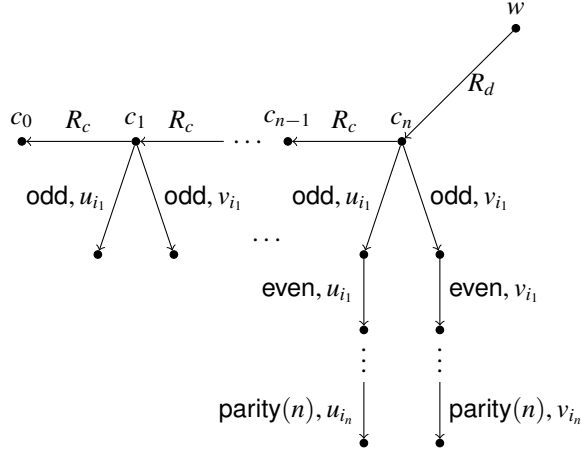

To sum up, given a PCP instance $(U,V)$, $\varphi(U,V)$ is satisfiable in a finite tree model if and only if $(U,V)$ has a solution. Therefore, we conclude that the satisfiability problem for $\MISL$ w.r.t. finite tree models is undecidable.
\end{proof}

%As a direct corollary, the satisfiability problem of $\MISL$ over finite models is undecidable, since the same proof applies. 

%%%%%%%%%%%%%%%%%%%%%

\section{Conclusion}\label{sec:conclusion}

 {\bf Summary}\;  Motivated by the ubiquitous applications of substitutions, we develop logical frameworks containing single-step substitutions and iterative substitutions as modal operators that update valuation of propositional variables. 
 
 Our starting point is a simple setting $\MSL$ with only single-step substitution operators. For the logic, we provide a complete proof system and show its decidability. Also, we clarify the differences between those operators with ordinary syntactic replacements.
 
 Then, for the more intricate proposal $\MISL$ containing iterative substitution operators, we develop various validities to show how those operators work and analyze its applications to crucial notions in games. Besides, we compare $\MISL$ with many other important logics that share a similar iterative feature, including the modal $\mu$-calculus, the infinitary modal logic $\ML^{\infty}$, the propositional dynamic logic $\PDL$ and the iterative modal relativization $\IMR$, which also illustrates advantages of our logic. Moreover, we explore suitable criteria to measure the expressiveness of the logics and study the computational behavior of both $\MISL$ and its  restriction to the class of finite tree models.

% Motivated by the ubiquitous applications of substitution ranging from social daily life to game settings, we explore formal frameworks in which both single-step substitutions and iterative substitutions are treated as modal operators. For the logical system $\MSL$ with single-step substitution operators, we provide a complete Hilbert-style proof system and show its decidability. Also, we clarify the differences between those operators with ordinary replacements that are common in many classical logics. Then, for the more intricate setting $\MISL$ containing iterative substitution operators, we develop various validities to show how those operators work  and analyze its applications to crucial notions in games. Besides, we compare the framework $\MISL$ with many other important logics that share a similar iterative feature, including the modal $\mu$-calculus, the infinitary modal logic $\ML^{\infty}$, the propositional dynamic logic $\PDL$ and the iterative modal relativization $\IMR$, which also illustrates advantages of our logic. Moreover, we explore suitable criteria to measure the expressive power of the logics, and investigate the satisfiability problems for $\MISL$ and its restriction to the very simple class of finite tree models.

\vspace{3mm}

 {\bf Further directions}\;  Let us end by a few directions that are worth pursuing in future. Several open problems have been identified along the way, including relations between $\MISL$ and other relevant logics. In addition to the logic frameworks mentioned here, it is crucial to recognize that there are many other important logics that have the iterative reasoning concepts, including   modal logics with inflationary fixed-point $\MIC$ \cite{MIC} and  the modal logic of oscillations \cite{Benthem2015OscillationsLA}. Some of them can also be translated into $\MISL$ or its further extensions,\footnote{For instance,  the framework $1\MIC$ (i.e., $\MIC$ without simultaneous inductions) can be embedded into $\MISL$ (when only finite sequences of approximation are allowed) and the oscillation operators can be defined by  the extension of $\MISL$ with the universal modality \cite{open-minds}, but we leave the details to other occasions.} but the precise connections remain to be determined. Also, the explorations in the article have  a model-theoretic approach, and it is important to study the logical proposals from a proof-theoretic perspective as well, e.g., sequent calculi. Finally, going beyond the current framework, it is   meaningful to allow ordinal sequences of iteration, and based on the simultaneous one-step substitutions \cite{public-assignment,public-assignment-journal,kooi-substitution-formulas}, one can also study the iterative generalization  like $\sub{(p:=\psi;q=\chi)^*}\varphi$, to make the logics applicable in broader scenarios.

\bigskip

\bibliographystyle{elsarticle-num}
\bibliography{mybib}

\end{document}